\documentclass[11pt,twoside]{amsart}

%

\setlength{\textheight}{9.15in}
\setlength{\textwidth}{6.5in}
\setlength{\oddsidemargin}{0pt}
\setlength{\evensidemargin}{0pt}
\setlength{\headsep}{12pt}
\setlength{\headheight}{0.4cm}
\setlength{\footskip}{12pt}
\setlength{\parindent}{0pt}
\setlength{\parskip}{4pt}
\setlength{\topmargin}{4pt}

\usepackage{xspace}
\usepackage{amssymb}
\usepackage{bbm}
\usepackage{graphicx}
\usepackage{url}
\usepackage{pifont}
\usepackage{enumerate}
\usepackage[matrix,arrow]{xy}
\usepackage{tikz}



\numberwithin{equation}{section}


\newcommand{\mi}{\bbi\xspace}

\DeclareMathSymbol{\varnothing}{\mathord}{AMSb}{"3F}


\DeclareMathOperator{\Res}{Res}
\DeclareMathOperator{\tr}{tr}
\DeclareMathOperator{\ord}{ord}


\newcommand{\bbC}{\mathbb{C}}

\newcommand{\bbE}{\mathbb{E}}

\newcommand{\bbH}{\mathbb{H}}

\newcommand{\bbN}{\mathbb{N}}

\newcommand{\bbR}{\mathbb{R}}
\newcommand{\bbS}{\mathbb{S}}

\newcommand{\bbZ}{\mathbb{Z}}

\newcommand{\R}{\mathbb{R}}
\newcommand{\C}{\mathbb{C}}
\newcommand{\one}{\mathbbm{1}}


\newcommand{\calO}{\mathcal{O}}

\newcommand{\CPone}{\bbC\mathrm{P}^1}


\theoremstyle{plain}
\newtheorem{theorem}{Theorem}[section]
\newtheorem*{theorem*}{Theorem}

\newtheorem*{corollary*}{Corollary}
\newtheorem{proposition}[theorem]{Proposition}
\newtheorem*{proposition*}{Proposition}
\newtheorem{lemma}[theorem]{Lemma}
\newtheorem*{lemma*}{Lemma}

\newtheorem*{example*}{Example}

\newtheorem*{definition*}{Definition}

\newtheorem*{notation*}{Notation}
\newtheorem{remark}[theorem]{Remark}
\newtheorem*{remark*}{Remark}

\newcommand{\bbi}{\mathbbm{i}}


\title{On closed finite gap curves in spaceforms II}
\author{S.~Klein}
\author{M.~Kilian}

\address{S.~Klein, Institut f\"ur Mathematik, Universit\"at Mannheim, Germany.}
\email{s.klein@math.uni-mannheim.de}
\address{M.~Kilian, Department of Mathematics,
University College Cork, Ireland.}
\email{m.kilian@ucc.ie}

\thanks{{\it Mathematics Subject Classification.} 53A04, 37K15. January 10, 2019.}

\begin{document}
\begin{abstract} 
We prove that the set of closed finite gap curves in hyperbolic 3-space $\bbH^3$ is $W^{2,2}$-dense in the Sobolev space of all closed $W^{2,2}$-curves in $\bbH^3$. We also show that the set of closed finite gap curves in any 2-dimensional space form $\bbE^2$ is $W^{2,2}$-dense in the Sobolev space of all closed $W^{2,2}$-curves in $\bbE^2$.
\end{abstract}
\maketitle


\section*{Introduction} 
The (self-focusing) non-linear Schr\"odinger (NLS) hierarchy is an infinite hierarchy of commuting energy functionals on the space of curves in 3-dimensional space forms. A curve is said to be of finite gap if its complex curvature function (in the sense of Hasimoto \cite{hasimoto1972}) is stationary under all but finitely many evolution equations of the NLS hierarchy.  
Grinevich \cite{grinevich2001} proved that closed finite gap curves in $\mathbb{R}^3$ are dense in the set of all closed curves in $\mathbb{R}^3$. This paper is the second part of two papers. In the first part \cite{kleinkilian1}, we extended Grinevich's result to closed curves in the round 3-sphere \,$\bbS^3$\,, and also showed that in both \,$\bbR^3$\, and \,$\bbS^3$\,, a closed curve can be approximated by closed finite gap curves of the same total torsion. In the present paper we prove the corresponding result for closed curves in hyperbolic 3-space \,$\bbH^3$\,, and for the 2-dimensional space forms. Putting all these results together then yields the following

\textbf{Theorem.} The set of closed finite gap curves in any two- or three-dimensional space form is dense in the space of all closed curves in that space form. In the three-dimensional space forms, closed curves can be approximated by closed finite gap curves of the same total torsion.

The shape of a curve in three dimensions is governed by two functions: its curvature and torsion. If the curve is closed, then these functions are periodic with commensurate periods. But taking two periodic functions with commensurate periods is not sufficient to obtain a closed curve - there are additional extrinsic closing conditions. These additional closing conditions can be written in terms of a spectral problem for a $2 \times 2$ matrix differential operator associated to the NLS hierarchy, and it is in this setting that we present our results. This is a well studied approach in the theory of integrable flows of curves, and there is a large body of literature on the subject, see \cite{langer1999, caliniivey2001, calini-ivey2005, goldstein-petrich1991} and the references therein.   

Like in \cite{kleinkilian1}, we use the spectral data and the associated perturbed Fourier coefficients derived from the closed curve via the NLS integrable system as the fundamental tool
for proving our results. The investigation of the spectral data carried out in \cite{kleinkilian1} (which is analogous to the investigation of the spectral data of the sinh-Gordon equation in \cite{klein-habil}) is fundamental also for the treatment of \,$\bbH^3$\, in the present paper. However, in comparison to the cases of \,$\bbR^3$\, and \,$\bbS^3$\, from \cite{kleinkilian1}, the case of \,$\bbH^3$\, is more complicated because the Sym points (i.e.~the spectral values at which the extended frame is evaluated in the Sym-Bobenko reconstruction formula)
of a closed curve in \,$\bbH^3$\, are not on the real line, and therefore the monodromy at these points is not necessarily semisimple. It follows that if we want to construct a
(finite gap) curve that satisfies the extrinsic closing condition for \,$\bbH^3$\,, it is not enough to make sure that the monodromy has the correct eigenvalues (\,$\pm 1$\,) at the Sym points. We have
to consider the case where the monodromy at the Sym points is not diagonalisable, and in this case we need to carry out an additional transformation to obtain a diagonalisable monodromy
with the same eigenvalues. We do this by applying a suitable Bianchi-B\"acklund transform, which we express as a simple factor dressing. For this technique to be successful, we need the eigenvalue function of the monodromy to attain the value \,$\pm 1$\, at the Sym points with second order. 

A brief outline of the paper is as follows: In section 1, we recall the $2 \times 2$ matrix model of hyperbolic 3-space $\bbH^3$, the complex curvature function of a curve, and its closing conditions in terms of the monodromy of its extended frame. In the second section we introduce spectral curves for periodic functions in $L^2([0,\,T],\,\bbC)$ and the eigenbundle of the monodromy. Section 3 deals with finite Mittag-Leffler distributions on the spectral curve with prescribed asymptotics. We use such distributions in section 4 to show that for every closed curve \,$\gamma$\, in \,$\bbH^3$\, there exists a sequence \,$(q_n)$\, of finite gap complex curvature functions which converge in \,$L^2$\, to the complex curvature of \,$\gamma$\,, such that the eigenvalue function of the monodromy of every \,$q_n$\, attains the value \,$\pm 1$\, at the Sym points with second order. In section 5, we use simple factor dressing in combination with the Baker-Akhiezer function to transform the \,$q_n$\, such that they fully satisfy the extrinsic closing condition for \,$\bbH^3$\,. In this way we prove that the set of closed finite gap curves in $\bbH^3$ is $W^{2,2}$-dense in the Sobolev space of all closed $W^{2,2}$ curves in $\bbH^3$. In the final section 6 we prove the result for curves in 2 dimensions and show that the set of closed finite gap curves in 2-dimensional space forms is $W^{2,2}$-dense in the Sobolev space of all closed $W^{2,2}$ curves in 2-dimensional space forms.

\textbf{Acknowledgements.} We thank Francis E.~Burstall and Martin U.~Schmidt for useful conversations. Sebastian~Klein is funded by the Deutsche Forschungsgemeinschaft, Grant 414903103.

%
%

\section{Extended frames of curves}
\label{Se:frames}

We identify hyperbolic 3-space \,$\bbH^3 \subset \bbR^{(3,1)}$\, as the symmetric space $\mathrm{SL}_2(\bbC) \slash \mathrm{SU}_2$ embedded in the real $4$-space of Hermitian symmetric matrices 
by the map \,$[g] \hookrightarrow g\,g^*$\,, where $g^*$ denotes the complex conjugate transpose of $g$.
The identity component of the isometry group of \,$\bbH^3$\, is isomorphic to the restricted Lorentz group \,$\mathrm{SO}^+(3,1)$\,,
whose double cover is \,$\mathrm{SL}_2(\bbC)$\,. \,$\mathrm{SL}_2(\bbC)$\, acts on \,$\bbH^3$\,  via the action 
\,$X \mapsto FXF^*$\,. We fix a basis of the Lie algebra \,$\mathfrak{sl}_2(\bbC)$\, by 
\begin{equation} \label{eq:epsilons}
  \varepsilon_- = \begin{pmatrix} 
  0 & 0 \\ -1 & 0 \end{pmatrix},\,
  \varepsilon_+ = \begin{pmatrix} 
  0 & 1 \\ 0 & 0 \end{pmatrix} \mbox{ and }
  \varepsilon = \begin{pmatrix}
  \mi & 0 \\ 0 & -\mi \end{pmatrix}\,.
\end{equation}  
The invariant Riemannian metric on \,$\bbH^3$\, induces an Ad-invariant inner product on \,$\mathfrak{sl}_2(\bbC)$\,, whose extension to a \,$\bbC$-bilinear map will be denoted by 
\,$\langle \cdot \, , \cdot \rangle$\,. Here we normalize the Riemannian metric in such a way that 
\,$\langle \varepsilon,\,\varepsilon \rangle = -\tfrac{1}{2} \tr\,\varepsilon^2 = 1$\, holds. We then moreover have
\begin{equation}\label{eq:commutators} \begin{split}
  &\langle \varepsilon_-,\,\varepsilon_- \rangle = 
  \langle \varepsilon_+,\,\varepsilon_+ \rangle = 0,\,
  \varepsilon_-^* = - \varepsilon_+,\,\varepsilon^* = -\varepsilon, \\
  &[\varepsilon_- ,\,\varepsilon ] = 2\mi \varepsilon_-,\,
  [\varepsilon,\, \varepsilon_+] = 2\mi \varepsilon_+ \mbox{ and }
  [\varepsilon_+,\, \varepsilon_-]= \mi\varepsilon. \end{split}
\end{equation}


For a curve \,$\gamma$\, in \,$\bbH^3$\, with geodesic curvature $\kappa$ and torsion $\tau$\,,
the complex-valued function $$q(t) = \kappa(t) e^{\mi \int_0^t \tau(s)\,ds}$$ is called the \emph{complex curvature} of $\gamma$, introduced by Hasimoto \cite{hasimoto1972}.
If \,$\gamma$\, is in \,$W^{k,2}$\, (i.e.~\,$k$-times differentiable in the Sobolev sense, where the final derivative is square-integrable) for \,$k\geq 2$\,,
then its complex curvature is in \,$W^{k-2,2}$\,. Note that the complex curvature is well-defined even when \,$\gamma$\, is only twice differentiable.
The complex curvature will be the main instrument with which we study curves in \,$\bbH^3$\,. The next lemma is a variant of the Frenet-Serret Theorem in our set-up, and is proven by direct computation. 
\begin{lemma} \label{th:sym}
Let \,$k \in \{2,3,\dotsc,\infty\}$\,, \,$T>0$\,, \,$q \in W^{k-2,2}([0,T],\C)$\, and
\begin{equation} \label{eq:alpha_q}
	\alpha^q = \tfrac{1}{2} \left( \lambda \,\varepsilon + q\,\varepsilon_+ + \bar{q}\,\varepsilon_- \right)
\end{equation}
Let $F = F(t,\,\lambda)$ be the unique solution of 
\begin{equation}\label{eq:frame1}
   \tfrac{d}{dt} F =  F \,\alpha^q \,,\qquad F(0,\,\lambda) = \mathbbm{1} \mbox{ for all } \lambda \in \mathbb{C}\,.
\end{equation}
Then \,$F(\,\cdot\,,\lambda) \in W^{k-1,2}([0,T],\bbC^{2\times 2})$\,. Moreover,
\begin{equation}
\label{eq:symbobenko-gamma}
\gamma(t) = F(t,\mi)\,F(t,-\mi)^{-1}
\end{equation}
is a unit-speed curve in $\bbH^3$ with complex curvature $q$,
with \,$\gamma \in W^{k,2}([0,T],\bbH^3)$\, and velocity
\begin{equation}
\label{eq:symbobenko-gamma'}
\gamma'(t) = \mi\,F(t,\mi)\,\varepsilon\,F(t,-\mi)^{-1}\,.
\end{equation}
\end{lemma}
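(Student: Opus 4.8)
The plan is to verify the statement in three stages, corresponding to its three claims (regularity of $F$; that $\gamma$ is a $W^{k,2}$-curve in $\bbH^3$ with unit speed; and that its complex curvature is $q$), using throughout the symmetric-space structure $\bbH^3=\mathrm{SL}_2(\bbC)/\mathrm{SU}_2$ and the algebra of $\varepsilon,\varepsilon_\pm$ recorded in \eqref{eq:commutators}. Since the assertion is a Frenet–Serret-type identity, each stage is a direct computation.

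\emph{Regularity and algebraic properties of $F$.} As $\alpha^q\in W^{k-2,2}([0,T])\subseteq L^2\subseteq L^1$, the linear ODE \eqref{eq:frame1} has a unique absolutely continuous solution $F$, which is continuous hence bounded on $[0,T]$; thus $F'=F\,\alpha^q\in L^2$ and $F\in W^{1,2}$. For $k\geq 3$ one bootstraps: if $F\in W^{j,2}$ with $1\leq j\leq k-2$, then $F$ and $\alpha^q$ both lie in the Banach algebra $W^{j,2}([0,T])$ (which embeds into $C^{j-1}$), so $F'=F\,\alpha^q\in W^{j,2}$ and $F\in W^{j+1,2}$; iterating gives $F\in W^{k-1,2}$, and for $k=\infty$ it gives $F\in C^\infty$. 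Because $\tr\,\alpha^q=0$, Liouville's formula yields $\det F(t,\lambda)\equiv\det F(0,\lambda)=1$, so $F(t,\lambda)\in\mathrm{SL}_2(\bbC)$. From \eqref{eq:commutators} one reads off the reality relation $(\alpha^q(\lambda))^*=-\alpha^q(\bar\lambda)$, so that $P(t):=F(t,\bar\lambda)^*\,F(t,\lambda)$ solves $P'=-\alpha^q(\lambda)\,P+P\,\alpha^q(\lambda)$ with $P(0)=\mathbbm 1$; since the constant $P\equiv\mathbbm 1$ is also a solution, uniqueness forces $F(t,\lambda)^{-1}=F(t,\bar\lambda)^*$. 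In particular $F(t,-\mi)^{-1}=F(t,\mi)^*$, hence $\gamma(t)=F(t,\mi)\,F(t,\mi)^*=g\,g^*$ with $g=F(t,\mi)\in\mathrm{SL}_2(\bbC)$, i.e.\ $\gamma(t)\in\bbH^3$; and $\gamma\in W^{k-1,2}$ a priori, since products and inverses of $W^{k-1,2}$ matrix functions of constant nonzero determinant remain in $W^{k-1,2}$.

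\emph{Velocity, unit speed, and the gained derivative.} Differentiating $\gamma=F(\cdot,\mi)\,F(\cdot,-\mi)^{-1}$ with $F'(\cdot,\lambda)=F(\cdot,\lambda)\,\alpha^q(\lambda)$ and $(F(\cdot,-\mi)^{-1})'=-\alpha^q(-\mi)\,F(\cdot,-\mi)^{-1}$, the $q$-dependent terms cancel because $\alpha^q(\mi)-\alpha^q(-\mi)=\mi\,\varepsilon$, which leaves precisely the velocity formula \eqref{eq:symbobenko-gamma'}, $\gamma'(t)=\mi\,F(t,\mi)\,\varepsilon\,F(t,-\mi)^{-1}=\mi\,F(t,\mi)\,\varepsilon\,F(t,\mi)^*$. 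Since $\mi\,\varepsilon=\mathrm{diag}(-1,1)$ is Hermitian and trace-free, it lies in the tangent space $\mathfrak p$ of $\bbH^3$ at the base point $\mathbbm 1$, so $\gamma'(t)=g\,(\mi\varepsilon)\,g^*$ is genuinely tangent to $\bbH^3$ at $\gamma(t)=gg^*$; pulling back by the isometry $X\mapsto g^{-1}X(g^{-1})^*$ gives $\|\gamma'(t)\|=\|\mi\varepsilon\|$, which equals $1$ by the metric normalization $\langle\varepsilon,\varepsilon\rangle=-\tfrac12\tr\varepsilon^2=1$. Finally, \eqref{eq:symbobenko-gamma'} involves $q$ only through the $W^{k-1,2}$ matrix $F$ and not through $\alpha^q$ directly, so $\gamma'\in W^{k-1,2}$ and hence $\gamma\in W^{k,2}$ — this is the extra derivative claimed.

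For the complex curvature I would introduce the adapted orthonormal frame $e_j(t)=F(t,\mi)\,E_j\,F(t,\mi)^*$ along $\gamma$, where $(E_1,E_2,E_3)=(\mi\varepsilon,\ \varepsilon_+-\varepsilon_-,\ \mi(\varepsilon_++\varepsilon_-))$ is an orthonormal basis of $\mathfrak p$, so that $e_1=\gamma'$ by the previous step. Decomposing $\alpha^q(\mi)=\tfrac12\mi\varepsilon+\tfrac12(q\,\varepsilon_++\bar q\,\varepsilon_-)$ into its $\mathfrak p$-part and its $\mathfrak{su}_2$-part, and using that for the symmetric space $\bbH^3$ the Levi-Civita covariant derivative along $\gamma$ of a field $F(\cdot,\mi)\,\xi\,F(\cdot,\mi)^*$ is obtained by retaining only the $\mathfrak p$-component of $\tfrac{d}{dt}$ (the complementary $\mathbbm 1$-component being the extrinsic acceleration normal to the hyperboloid in the ambient Lorentz space), one gets $\tfrac{D}{dt}e_j=F(\cdot,\mi)\,[\,\tfrac12(q\,\varepsilon_++\bar q\,\varepsilon_-),\,E_j\,]\,F(\cdot,\mi)^*$. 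Evaluating the brackets with \eqref{eq:commutators} yields $\tfrac{D}{dt}e_1=\RE(q)\,e_2+\IM(q)\,e_3$ together with $\tfrac{D}{dt}e_2=-\RE(q)\,e_1$ and $\tfrac{D}{dt}e_3=-\IM(q)\,e_1$; thus $(e_2,e_3)$ is a parallel normal frame of $\gamma$ in which the curvature vector $\tfrac{D}{dt}\gamma'$ has complex coordinate $q$ — which is exactly Hasimoto's complex curvature $\kappa\,e^{\mi\int_0^t\tau}$. I expect this last identification to be the main point requiring care: one must justify the symmetric-space covariant-derivative formula (distinguishing intrinsic from extrinsic acceleration) and then reconcile orientation and the choice of initial normal frame, together with the sign of $\tau$, so that the outcome is $q$ itself rather than $\bar q$ or a constant unimodular multiple of $q$. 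The regularity and reality parts are by comparison routine once the bootstrap and the uniqueness trick for $P$ are in place.
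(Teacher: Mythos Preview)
Your argument is correct and is precisely the ``direct computation'' the paper alludes to without giving details. Your closing worry is unwarranted: the Bishop-frame equations you derive, $\tfrac{D}{dt}e_1=\RE(q)\,e_2+\IM(q)\,e_3$ with $e_2,e_3$ parallel in the normal bundle, are exactly Hasimoto's characterisation of the complex curvature, which is only well-defined up to the unimodular constant coming from the choice of initial normal frame --- an ambiguity already built into the definition.
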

%

The map $F: [0,T] \times \mathbb{C} \to \mathrm{SL}_2(\bbC)$ is called an {\emph{extended frame}} of the curve.
The spectral values \,$\lambda=\pm\mi$\, at which \,$F$\, is evaluated in the reconstruction formulas \eqref{eq:symbobenko-gamma} and \eqref{eq:symbobenko-gamma'} are called \emph{Sym points}.
\,$F$\, satisfies the {\emph{reality condition}} 
\begin{equation} \label{eq:reality}
	\overline{F(t,\,\bar\lambda)}^t = F^{-1}(t,\,\lambda) \quad \mbox{for all } \lambda \in \bbC \mbox{ and all } t \in \mathbb{R}\,.
\end{equation}
%
%
Now suppose \,$q\in L^2([0,T],\C)$\, is the complex curvature of a closed, unit speed curve \,$\gamma$\, in \,$\bbH^3$\, of length \,$T$\,. 
Then $q$ (extended naturally to \,$\bbR$\,) is in general not periodic, but only quasi-periodic, meaning that
\begin{equation} \label{eq:q-quasiperiodic}
q(t+T)=\exp(\mi\theta T)\, q(t) \quad \text{for all \,$t\in \R$\,,}
\end{equation}
with a unique number \,$\theta \in \R$\,. If \,$\gamma$\, is three times differentiable, so that its torsion function \,$\tau$\, is well-defined, then 
\,$\theta = \tfrac{1}{T} \int_0^T \tau \in \R$\, is the average torsion of the curve \,$\gamma$\,, so the total torsion of \,$\gamma$\, divided by its length.
We will take the liberty of calling the number \,$\theta T$\, the \emph{total torsion} of \,$\gamma$\, even where \,$\gamma$\, is only twice differentiable (and therefore
the torsion function of \,$\gamma$\, is not in general well-defined). 

In order to apply spectral theory to \,$\gamma$\,, we need a periodic potential. A periodic potential \,$\widetilde{q}$\, can be obtained from \,$q$\, by gauging \,$\alpha^q$\, with \,$g(t)$\,, see
\cite[p.~11]{grinevich-schmidt-sfb} and \cite[Section~1]{kleinkilian1}, where
$$ g(t) := \begin{pmatrix} \exp(\mi \theta t/2) & 0 \\ 0 & \exp(-\mi \theta t/2) \end{pmatrix} \; . $$
Indeed we have
$$ g.\alpha^q = g\,\alpha^q\,g^{-1} - \left( \tfrac{\mathrm{d}\ }{\mathrm{d}t} g \right)\,g^{-1} = \tfrac{1}{2} \left( \widetilde{\lambda} \,\varepsilon + \widetilde{q}\,\varepsilon_+ + \overline{\widetilde{q}}\,\varepsilon_- \right) = \alpha^{\widetilde{q}}$$
with
$$ \widetilde{q}(x) := \exp(-\mi \theta t)\,q(x) \quad\text{and}\quad \widetilde{\lambda} = \lambda+\theta \; . $$
The potential \,$\widetilde{q}$\, is periodic because of equation~\eqref{eq:q-quasiperiodic}, and the original curve \,$\gamma$\, can be reconstructed from the data \,$(\widetilde{q},\theta)$\,:
Let \,$F^{\widetilde{q}}$\, be the extended frame defined by \,$\widetilde{q}$\,, so the solution of the initial value problem
$$ \tfrac{d}{dt} F^{\widetilde{q}} =  F^{\widetilde{q}} \,\alpha^{\widetilde{q}} \,,\qquad F^{\widetilde{q}}(0,\,\widetilde{\lambda}) = \mathbbm{1} \mbox{ for all } \widetilde{\lambda} \in \mathbb{C}\,. $$
Due to Lemma~\ref{th:sym} we then have
\begin{equation}
\label{eq:gamma-H3}
\gamma(t) = F^{\widetilde{q}}(t,\,\mi+\theta)\, F^{\widetilde{q}}(t,\,-\mi+\theta)^{-1} \quad\text{and}\quad
\gamma'(t) = \mi\,F^{\widetilde{q}}(t,\,\mi+\theta)\, \varepsilon\, F^{\widetilde{q}}(t,\,-\mi+\theta)^{-1} \; . 
\end{equation}

From here on, we will denote by \,$q$\, the periodic potential that has been obtained from the complex curvature of \,$\gamma$\, by the above regauging. We will omit the tilde in the names of
\,$\widetilde{q}$\,, \,$\widetilde{\lambda}$\, and associated objects. Moreover we will henceforth usually omit the superscript \,${}^q$\, in \,$\alpha^q$\, and similar objects.

Even where the potential \,$q$\, is periodic, the extended frame \,$F$\, is generally not periodic. The extent of its non-periodicity is measured by the \emph{monodromy} 
\,$M(\lambda) := F(T,\lambda)$\,. Due to the periodicity of \,$q$\, we have
$$ F(t+T,\lambda)=M(\lambda)\, F(t,\lambda) \quad\text{for every \,$t\in \R$\,.} $$


%
\begin{remark} \label{th:monodromy-remarks}
The monodromy inherits the following properties from its extended frame.

{\rm{(i)}} From the ODE \eqref{eq:frame1} it follows that the map $\bbC \to \mathrm{SL}_2(\bbC),\,\lambda \mapsto M(\lambda)$ is analytic.

{\rm{(ii)}} From the reality condition \eqref{eq:reality} it follows that 
\begin{equation} \label{eq:mreality}
	\overline{M(\overline{\lambda})}^t = M(\lambda)^{-1} \quad \mbox{ for all } \lambda \in \bbC\,. 
\end{equation}
In particular $M(\lambda) \in \mathrm{SU}_2$ for all $\lambda \in \bbR$.

{\rm{(iii)}} The trace $\Delta: \lambda \mapsto \tr\, M(\lambda)$ is analytic and satisfies
\begin{equation} \label{eq:trace}
	 \overline{\Delta(\overline{\lambda})} = \Delta(\lambda)\,.
\end{equation}
{\rm{(iv)}} The two eigenvalues of $M$ are given by 
\begin{equation} \label{eq:eigenvalues}
	\mu^{\pm 1}  = \tfrac{1}{2} \bigl( \Delta \pm \sqrt{\Delta^2 - 4} \bigr)
\end{equation}
and satisfy $\Delta = \mu + \mu^{-1}$ and 
\begin{equation} \label{eq:mu-reality}
	\overline{\mu(\overline{\lambda})} = \mu(\lambda)^{-1}\,.
\end{equation}
{\rm{(v)}} The map $\lambda \mapsto \mu(\lambda)$ is branched at the odd ordered roots of $\Delta^2 - 4$, and there
\begin{equation} \label{eq:branch}
	\Delta = \pm 2 \Longleftrightarrow \mu = \pm 1\,.
\end{equation}
{\rm{(vi)}} The curve defined by equation \eqref{eq:gamma-H3} is $T$-periodic if and only if
\begin{equation} M(\mi+\theta) = M(-\mi+\theta) = \pm \mathbbm{1} \;. 
\end{equation}
\end{remark}

\section{The spectral curve}

In the sequel we denote by \,$L^2([0,T],\bbC)$\, the space of complex-valued square-integrable functions on \,$[0,T]$\,, and we will always regard such functions as being extended
periodically on the real line. 
We consider a periodic potential \,$q\in L^2([0,T],\bbC)$\,, let \,$F(x,\lambda)$\, be the corresponding extended frame and \,$M(\lambda)$\, be the associated monodromy. We also consider the
holomorphic function \,$\Delta := \tr M$\,, see Remark~\ref{th:monodromy-remarks}(iii).
The \emph{spectral curve} \,$\Sigma$\, of \,$q$\, is then defined by the characteristic equation of \,$M$\,:
$$ \Sigma = \bigr\{ \; (\lambda,\mu)\in \C^2 \;\bigr|\; \mu^2 - \Delta(\lambda)\,\mu+1=0 \;\bigr\} \; . $$
\,$\Sigma$\, is a complex curve, its singularities occur at those \,$\lambda \in \C$\, for which \,$\Delta^2-4$\, has a zero of order \,$\geq 2$\,. \,$\Sigma$\, is hyperelliptic above \,$\C$\,,
in other words,
$$ \pi: \Sigma \to \bbC, \; (\lambda,\mu) \mapsto \lambda $$
is a holomorphic, two-fold branched covering map; its branch points occur above those \,$\lambda\in \C$\, for which \,$\Delta^2-4$\, has a zero of odd order. 
The hyperelliptic involution of \,$\Sigma$\, is given by
$$ \sigma: \Sigma \to \Sigma, \; (\lambda,\mu) \mapsto (\lambda,\mu^{-1}) \; . $$
Remark~\ref{th:monodromy-remarks}(ii) shows that \,$\Sigma$\, is also equipped with the anti-holomorphic involution
$$ \eta: \Sigma\to\Sigma,\; (\lambda,\mu) \mapsto (\overline{\lambda},\overline{\mu}^{-1}) \; , $$
which commutes with \,$\sigma$\,. 
The eigenvector bundle of \,$M$\, is a holomorphic line bundle on a certain partial normalisation \,$\widetilde{\Sigma}$\, of \,$\Sigma$\,, which we call the \emph{eigenline curve} of \,$M$\,.
It is characterised by the property that the
generalised divisor (in the sense of Hartshorne \cite[\S 1]{Hartshorne}) which describes the eigenvector bundle is locally free on the branched one-fold covering \,$\widetilde{\Sigma}$\,
of \,$\Sigma$\,, see \cite[Section~4]{klss2016} (where \,$\widetilde{\Sigma}$\, is called the ``middleding'' of the holomorphic matrix \,$M$\,)
and also compare \cite[Section~3]{klein-habil}. The transpose matrix \,$M^t$\, has the same eigenline curve as \,$M$\,, and therefore the eigenvector
bundle of \,$M^t$\, also is a holomorphic line bundle on \,$\widetilde{\Sigma}$\,. Thus there exist non-zero holomorphic sections \,$v,w: \widetilde{\Sigma}\to\C^2$\, of these eigenline bundles so that
$$ M v = \mu\,v \quad\text{respectively}\quad M^t\,w = \mu\,w \; . $$
In terms of these sections, the projection operator onto the eigenline bundle of \,$M$\, is given by the meromorphic operator map on \,$\widetilde{\Sigma}$\,
$$ P := \frac{v \, w^t}{w^t \, v} \; ,$$
note that \,$P$\, does not change when \,$v$\, and \,$w$\, are scaled by non-zero holomorphic functions. In the sequel we will
also consider \,$u = v \circ \sigma^{-1}$\,.

\begin{proposition}
  \label{P:spectral:P}
  \begin{itemize}
  \item[(i)] \,$Pv=v$\,, \,$Pu=0$\,, \,$P^2=P$\,.    
  \item[(ii)] \,$(v,u)$\, is a basis of \,$\C^2$\, at all those \,$(\lambda,\mu)\in\widetilde{\Sigma}$\, with \,$\mu\neq \pm 1$\,.
  \item[(iii)] \,$\sum P = \one$\, and \,$\sum \mu P = M(\lambda)$\,, where \,$\sum$\, denotes the sum over the two sheets of \,$\widetilde{\Sigma}$\,.
  \item[(iv)] For any linear operator \,$A$\, on \,$\C^2$\, we have \,$\tr(P\, A) = \tfrac{w^t \, Av}{w^t \, v}$\,. In particular, \,$\tr(P\, M)=\mu$\,.    
  \end{itemize}
\end{proposition}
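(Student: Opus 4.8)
The plan is to derive everything from the defining relations $Mv=\mu v$ and $M^t w = \mu w$ together with the structure of the hyperelliptic covering $\pi\colon\widetilde\Sigma\to\bbC$ and the involution $\sigma$. Throughout, recall that at a point $(\lambda,\mu)$ with $\mu\neq\pm1$ the two points of $\pi^{-1}(\lambda)$ are $(\lambda,\mu)$ and $\sigma(\lambda,\mu)=(\lambda,\mu^{-1})$, at which the eigenvalues of $M(\lambda)$ are $\mu$ and $\mu^{-1}$ respectively, and these are distinct, so $M(\lambda)$ is then diagonalisable with one-dimensional eigenspaces.

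For (i): $Pv = \frac{v\,w^t v}{w^t v} = v$ is immediate from the definition of $P$. For $Pu=0$: by definition $u=v\circ\sigma^{-1}$, so at a point $p$ with $\sigma(p)$ lying over the same $\lambda$, the vector $u(p)=v(\sigma(p))$ is an eigenvector of $M(\lambda)$ for the eigenvalue $\mu(\sigma(p))=\mu(p)^{-1}$, while $w(p)$ is an eigenvector of $M(\lambda)^t$ for the eigenvalue $\mu(p)$. Since $M^t$ has the transposed eigenvalue structure, $w(p)$ annihilates every eigenvector of $M(\lambda)$ belonging to an eigenvalue $\neq\mu(p)$; as $\mu(p)\neq\mu(p)^{-1}$ off the branch points, $w^t u = 0$ there, hence $Pu = \frac{v\,w^t u}{w^t v} = 0$, and this extends by continuity/holomorphy to all of $\widetilde\Sigma$. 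Then $P^2 = \frac{v\,w^t}{w^t v}\cdot\frac{v\,w^t}{w^t v} = \frac{v\,(w^t v)\,w^t}{(w^t v)^2} = P$. Statement (ii) follows because $v$ and $u$ are eigenvectors of $M(\lambda)$ for the distinct eigenvalues $\mu$ and $\mu^{-1}$ (using $\mu\neq\pm1\Leftrightarrow\mu\neq\mu^{-1}$), hence linearly independent.

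For (iii): fix $\lambda$ generic (so $\mu\neq\pm1$) and use the basis $(v,u)$ of $\bbC^2$ from (ii). By (i), on the sheet carrying $(v,\mu,w)$ the operator $P$ is the projection onto $\mathrm{span}(v)$ along $\mathrm{span}(u)$; on the other sheet $\sigma$ swaps the roles, so there $P$ projects onto $\mathrm{span}(u)$ along $\mathrm{span}(v)$. Adding the two gives the identity on each basis vector, so $\sum P = \one$. For the second identity, $\sum\mu P$ acts on $v$ as $\mu v$ (from the first sheet) plus $0$ (from the second), i.e. as $M(\lambda)v$, and on $u$ as $0 + \mu^{-1}u = M(\lambda)u$; since $(v,u)$ spans $\bbC^2$ this forces $\sum\mu P = M(\lambda)$. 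For (iv), write $\mathrm{tr}(PA) = \mathrm{tr}\bigl(\frac{v\,w^t}{w^t v}A\bigr) = \frac{1}{w^t v}\mathrm{tr}(v\,w^t A) = \frac{w^t A v}{w^t v}$ using cyclicity of the trace and $\mathrm{tr}(v x^t) = x^t v$ for column vectors; taking $A=M$ and $M^t w=\mu w$ gives $w^t M = \mu w^t$, hence $\mathrm{tr}(PM) = \frac{\mu\, w^t v}{w^t v} = \mu$.

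The main obstacle is purely the bookkeeping at the branch points of $\pi$, where $\mu=\pm1$ and $v$, $u$ may become proportional: all four claims are phrased as identities of holomorphic (resp. meromorphic) objects on $\widetilde\Sigma$, so once they are verified on the dense open set $\{\mu\neq\pm1\}$ they extend automatically, but one should be careful that $P$ is a priori only meromorphic (the denominator $w^t v$ may vanish) and check that the stated identities are identities of meromorphic operator maps; this is where the characterisation of $\widetilde\Sigma$ as the normalisation on which the eigenline bundle becomes locally free is used implicitly, and citing \cite[Section~4]{klss2016} suffices.
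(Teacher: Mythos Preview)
Your proof is correct and follows essentially the same approach as the paper: verify the identities on the dense open set where \,$\mu\neq\pm 1$\, (and \,$w^tv\neq 0$\,) using that \,$v,u$\, are eigenvectors of \,$M(\lambda)$\, for the distinct eigenvalues \,$\mu,\mu^{-1}$\,, then extend by holomorphy. The only cosmetic differences are that the paper computes \,$w^tu=0$\, from explicit formulas \,$v=(b,\mu-a)$\,, \,$w=(c,\mu-a)$\,, \,$u=(b,\mu^{-1}-a)$\, rather than via your duality argument, and the paper cites \cite[Lemma~6.1]{kleinkilian1} for (iv) rather than writing out the trace manipulation.
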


\begin{proof}
It suffices to check the statements for those points \,$(\lambda,\mu)\in\widetilde{\Sigma}$\, with \,$\mu\neq \pm 1$\, and \,$w^t\, v \neq 0$\,. The equation \,$Pv=v$\, then follows immediately from the definition of \,$P$\,,
  and if we write \,$M=\left( \begin{smallmatrix} a & b \\ c & d \end{smallmatrix} \right)$\,,
  we have \,$v=(b,\mu-a)$\,, \,$w=(c,\mu-a)$\, and \,$u=(b,\mu^{-1}-a)$\,. We have the equation
  $$ (\mu-a)(\mu-d) = bc \;, $$
  whence
\begin{equation*} \begin{split}
w^t\, u &= (c,\mu-a)\, (b,\mu^{-1}-a) = c\,b+(\mu-a)(\mu^{-1}-a) \\ &= cb+(\mu-a)((\Delta-\mu)-(\Delta-d)) = c\,b-(\mu-a)(\mu-d)=0 \;,
\end{split}
\end{equation*}
and thus \,$Pu=0$\, follows. Because of the equations \,$Pv=v$\, and \,$Pu=0$\,, the vectors \,$(v,u)$\, must be linear independent, and thus a basis of \,$\C^2$\,. This fact also implies \,$P^2=P$\,.
  This completes the proof of (i) and (ii). (iii) follows from (i), (ii) and the fact that the function \,$\mu$\, enumerates the eigenvectors of \,$M(\lambda)$\,. Finally (iv) was shown in
  \cite[Lemma~6.1]{kleinkilian1}.
\end{proof}

Because of \,$\det M =1$\,, we have \,$M^{-1} = J\,M^t\,J^{-1}$\, with \,$J := \varepsilon_+ +\varepsilon_-$\,; it therefore follows from Equation~\eqref{eq:mreality} that
$$ 
	\overline{M(\overline{\lambda})} = J\,M(\lambda)\,J^{-1} $$
holds. With the anti-holomorphic involution \,$\rho := \sigma \circ \eta = \eta\circ \sigma : (\lambda,\mu) \mapsto (\overline{\lambda},\overline{\mu})$\, we therefore have
\,$\rho^* \overline{v} = Jv$\, and \,$\rho^* \overline{w} = Jw$\,, and thus
\begin{equation}
\label{eq:spectral:rho-P}
\rho^* \overline{P} = J\,P\,J^{-1} \; .
\end{equation}

For \,$x\in \R$\, we define \,$\tau_xq(t)=q(t+x)$\,. Then we have \,$F^{\tau_x q}(t,\lambda)=F^q(x,\lambda)^{-1}\,F^q(t+x,\lambda)$\,,
and therefore the monodromy of \,$\tau_x q$\, is given by
\begin{equation} \label{eq:spectral:M-trans}
  M^{\tau_x q}(\lambda) = F^{\tau_x q}(T,\lambda) = F^q(x,\lambda)^{-1}\,F^q(T+x,\lambda) = F^q(x,\lambda)^{-1}\,M^q(\lambda)\,F^q(x,\lambda) \; .
\end{equation}
This equation shows in particular that \,$M^{\tau_x q}$\, is the solution of the differential equation
$$ \frac{\mathrm{d}\ }{\mathrm{d}x} M^{\tau_x q} = [M^{\tau_x q},\,\alpha^q] \; . $$
Equation~\eqref{eq:spectral:M-trans} also shows that \,$F^q(x,\lambda)^{-1}\,v$\, and~\,$F^q(x,\lambda)^t\,w$\, is an eigenvector of \,$M^{\tau_x q}(\lambda)$\, respectively of \,$M^{\tau_x q}(\lambda)^t$\, for the eigenvalue \,$\mu$\,. It follows that 
\begin{equation}
  \label{eq:spectral:P-trans}
  P^{\tau_x q} = F^q(x,\lambda)^{-1}\, P^q \, F^q(x,\lambda) \; .
\end{equation}
A complex curvature function \,$q$\, is finite gap if and only if the eigenline curve \,$\widetilde{\Sigma}$\, has finite arithmetic genus. Therefore \,$\widetilde{\Sigma}$\, can in this case be compactified,
obtaining a hyperelliptic curve above \,$\CPone$\,, which
we again denote by \,$\widetilde{\Sigma}$\,. For \,$\lambda\to\infty$\,, \,$\widetilde{\Sigma}$\, is approximated by the corresponding curve for the vacuum \,$q=0$\,, which shows that
\,$\infty$\, is not a branching point of the compactification \,$\widetilde{\Sigma}$\,, in other words there are two points \,$\infty_+, \infty_- \in \widetilde{\Sigma}$\, that are above
\,$\lambda=\infty \in \CPone$\,. 

Even in the general case for \,$q$\,, the asymptotic behavior of the monodromy \,$M$\, described in \cite[Theorem~2.1]{kleinkilian1} shows that \,$\Sigma$\, and \,$\widetilde{\Sigma}$\,
can be compactified as a 1-dimensional complex space in a certain sense by adding points above \,$\lambda=\infty$\,, analogously to the spectral curve \,$Y$\, in \cite[Definition~2.1]{schmidt1996}.
The comparison to the spectral curve of the vacuum (\,$q=0$\,) shows that these compactifications are not branched above \,$\lambda=\infty$\,. Thus they have two points above \,$\lambda=\infty$\,,
which we will denote by \,$\infty_+$\, and \,$\infty_-$\,. 

\section{Mittag-Leffler distributions on the spectral curve}

In the present section we will study Mittag-Leffler distributions on the eigenline curve \,$\widetilde{\Sigma}$\, of the spectral curve \,$\Sigma$\,.
For the concept of a Mittag-Leffler distribution, see for example \cite[\S 18.1]{forster}.
In particular we will obtain a criterion when a Mittag-Leffler distribution on \,$\widetilde{\Sigma}$\,
can be solved by a meromorphic function that tends to zero of order \,$O(\lambda^{-1})$\, for \,$\lambda\to\infty$\, (Proposition~\ref{P:ML:solvable}). The construction described here follows the strategy
employed by \textsc{Schmidt} in \cite[Chapter~7]{schmidt1996} and \cite[Section~3.1]{schmidtwillmore} for other types of integrable systems.
Like the second reference, and unlike the first reference, we will only consider finite Mittag-Leffler
distributions, to avoid certain topological difficulties. Our application of these results in Section~\ref{Se:closing} will concern a Mittag-Leffler distribution whose support has at most two points. However, most results of the present section can be transferred to infinite Mittag-Leffler distributions if suitable convergence conditions are introduced, like it was done
in \cite[Chapter~7]{schmidt1996}.

For the sake of simplicity of notation, we will \emph{denote the eigenline curve of \,$M$\, by \,$\Sigma$\, instead of \,$\widetilde{\Sigma}$\,} in this section (only).
We denote the sheaf of holomorphic functions on the eigenline curve \,$\Sigma$\, by \,$\calO$\,, and for any divisor \,$D$\, on \,$\Sigma$\,, the sheaf of meromorphic functions \,$f$\, on \,$\Sigma$\,
with \,$(f) \geq -D$\, by \,$\calO_D$\,. Moreover, we denote by \,$\Omega$\, the canonical bundle of \,$\Sigma$\,, i.e.~the sheaf of holomorphic 1-forms on \,$\Sigma$\,.

Let \,$D$\, be a divisor on \,$\Sigma$\, and \,$\mathfrak{U}=(U_i)_{i\in I}$\, an open covering of \,$\Sigma$\,. Then a \emph{Mittag-Leffler distribution} on \,$\Sigma$\, for the divisor \,$D$\, is a
cochain \,$\mu = (h_i) \in C^0(\mathfrak{U},\calO_D)$\, so that the differences \,$h_i-h_j$ are holomorphic on \,$U_i\cap U_j$\,, i.e.~so that \,$\delta \mu = Z^1(\mathfrak{U},\calO)$\, holds.
The cohomology class induced by \,$\mu$\, is \,$[\delta \mu] \in H^1(\Sigma,\calO)$\,. In the sequel, we identify Mittag-Leffler distributions \,$\mu$\, with cohomology classes
\,$h=[\delta \mu] \in H^1(\Sigma,\calO)$\,.
A \emph{solution} of \,$\mu$\, resp.~of \,$h$\, is a global meromorphic function \,$f \in C^0(\Sigma,\calO_D)$\, which has the same principal parts as \,$\mu$\,, i.e.~so that \,$f|U_i-h_i$\, is holomorphic for
every \,$i\in I$\,.

We say that the Mittag-Leffler distribution \,$h$\, is \emph{finite} if the support of the corresponding divisor \,$D$\, is finite,
and denote the space of cohomology classes corresponding to finite Mittag-Leffler distributions by \,$H^1_{fin}(\Sigma,\calO)$\,. In this section, we will focus on finite
Mittag-Leffler distributions.  

For \,$h\in H^1_{fin}(\Sigma,\calO)$\, and \,$\omega \in H^0(\Sigma,\Omega)$\, we define
$$ \Res(h,\omega) = -\sum \Res(h\,\omega) \;, $$
where the sum runs over the finitely many poles of \,$f$\, on \,$\Sigma$\,. This is equal to the sum of the residues of \,$h\omega$\, at \,$\infty_\pm$\,. In this way, we obtain a
non-degenerate pairing
$$ \Res: H^1_{fin}(\Sigma,\calO) \times H^0(\Sigma,\Omega) \to \bbC \; . $$

We now consider the subspace \,$H^1_{fin,\rho}(\Sigma,\calO)$\, of those \,$h\in H^1_{fin}(\Sigma,\calO)$\,
which are compatible with the anti-holomorphic involution \,$\rho: \Sigma\to\Sigma,\;(\lambda,\mu)\mapsto(\overline{\lambda},\overline{\mu})$\, in the sense that
\,$\rho_*\overline{h}=h$\, holds. Likewise we denote by \,$H^0_\rho(\Sigma,\Omega)$\, the subspace of those \,$\omega \in H^0(\Sigma,\Omega)$\, which satisfy \,$\rho^*\overline{\omega}=\omega$\,. 
Note that the pairing \,$\Res(h,\omega)$\, is real-valued for \,$h\in H^1_{fin,\rho}(\Sigma,\calO)$\, and \,$\omega \in H^0_\rho(\Sigma,\Omega)$\,. 

On the other hand, the real symplectic map 
$$ \Omega: L^2([0,T],\bbC) \times L^2([0,T],\bbC) \to \R,\; (\delta_1 q, \delta_2 q) \mapsto \frac12 \int_0^T \mathrm{Im}\bigr(\delta_1q(t)\, \overline{\delta_2q(t)}\bigr)\,\mathrm{d}t $$
on the Banach space \,$L^2([0,T])$\, of infinitesimal variations of the periodic potential \,$q$\, is non-degenerate, and can be seen as a non-degenerate real pairing of \,$L^2([0,T],\bbC)$\, with itself.

The main objective of this section is to show that these two pairings are in a certain sense conjugate to each other: Below, we will construct an \,$\R$-linear map
$$ \Phi: H^1_{fin,\rho}(\Sigma,\calO) \to L^2([0,T],\bbC), \; h \mapsto \delta_h q $$
so that with the \,$\R$-linear map 
$$ \Psi: L^2([0,T],\bbC) \to H^0_\rho(\Sigma,\Omega),\; \delta q \mapsto \frac{1}{\mu} \left(\frac{\partial \mu}{\partial q}\,\delta q\right)\mathrm{d}\lambda $$
the following diagram commutes:
\begin{center}
\begin{tikzpicture}
  \begin{scope}[scale=1.0]
    \draw (0,1.5) node (A) {$H^1_{fin,\rho}(\Sigma,\calO)$};
    \draw (6,1.5) node (B) {$L^2([0,T],\bbC)$};
    \draw (0,0) node (C) {$H^0_\rho(\Sigma,\Omega)$};
    \draw (6,0) node (D) {$L^2([0,T],\bbC)\;.$};
    \draw (0,0.75) node (E) {\LARGE $\times$};
    \draw (6,0.75) node (F) {\LARGE $\times$};
    \draw (3,0.75) node (G) {$\bbR$};
    \path[->,font=\scriptsize] (A) edge node[above] {$\Phi$} (B);
    \path[->,font=\scriptsize] (D) edge node[below] {$\Psi$} (C);
    \path[->,font=\scriptsize] (E) edge node[above] {$\Res$} (G);
    \path[->,font=\scriptsize] (F) edge node[above] {$\Omega$} (G);
  \end{scope}
\end{tikzpicture}
\end{center}
In other words, we will have
$$ \Res(h,\Psi(\delta q)) = \Omega(\Phi(h),\delta q) \quad\text{for any \,$h \in H^1_{fin,\rho}(\Sigma,\calO)$\, and \,$\delta q \in L^2([0,T],\bbC)$\,.} $$

\begin{proposition} \label{P:ML:Phi}

\rm{(i)} Let \,$h\in H^1_{fin,\rho}(\Sigma,\calO)$\, be given. There exist \,$W^{1,2}$-differentiable maps \,$a_+$\, into the \,$(2\times 2)$-matrix-valued entire maps in \,$\lambda$\,, and
    \,$a_-$\, into the \,$(2\times 2)$-matrix-valued meromorphic functions in \,$\lambda \in \bbC$\, with poles at most at the points below the support of \,$h$\, and which go to zero for \,$\lambda\to\infty$\,
    so that
    $$ a_+ + a_- = a:= \sum\, h\, P^{\tau_x q} $$
    holds; here \,$\sum$\, denotes the sum over the two sheets of the complex curve \,$\Sigma$\,. The matrix-valued function
    \begin{equation}
      \label{eq:ML:Phi:deltaalpha}
      \delta \alpha := \frac{\mathrm{d}\ }{\mathrm{d}x} a_- + [\alpha,\,a_-] = -\tfrac12 [\varepsilon,\,\Res (a\,\mathrm{d}\lambda)]
    \end{equation}
    is of the form
    \,$\tfrac12\, \left( \begin{smallmatrix} 0 & \delta q \\ - \overline{\delta q} & 0 \end{smallmatrix} \right)$\, with some \,$\delta q \in L^2([0,T],\bbC)$\,, in particular \,$\delta \alpha$\, is independent of \,$\lambda$\,.
    We define the \,$\bbR$-linear map \,$\Phi: H^1_{fin,\rho}(\Sigma) \to L^2([0,T],\bbC)$\, by \,$h \mapsto \delta q$\,.
  
\rm{(ii)}
    For every \,$h\in H^1_{fin,\rho}(\Sigma,\calO)$\, we have \,$\tfrac{\partial \mu}{\partial q}\, \Phi(h)=0$\, and \,$\tfrac{\partial v}{\partial q}\, \Phi(h)=-a_-(0)\,v$\,,
    with \,$a_-$\, as in (i).
    
\rm{(iii)}    If some \,$h \in H^1_{fin,\rho}(\Sigma,\calO)$\, satisfies \,$\Phi(h)=0$\,, then the Mittag-Leffler distribution \,$h$\, is solvable by means of a meromorphic function \,$f$\, on \,$\Sigma$\, which tends to zero of order \,$O(\lambda^{-1})$\, near \,$\infty_\pm$\,. 
\end{proposition}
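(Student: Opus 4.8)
I would prove the three parts in order, with part~(i) carrying essentially all of the analytic content and parts~(ii), (iii) following from it by short algebraic arguments.

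\textbf{Part (i).} First set $a(x,\lambda):=\sum h\,P^{\tau_xq}$. By~\eqref{eq:spectral:P-trans}, $P^{\tau_xq}=F^q(x,\lambda)^{-1}P^q F^q(x,\lambda)$ is meromorphic on $\Sigma$ with poles only over $\pi(\operatorname{supp}h)$ and over the branch/singular points of $\Sigma$, and is $W^{1,2}$ in $x$ because $F^q(x,\cdot)$ solves the linear ODE~\eqref{eq:frame1} with $L^2$-coefficient. Summing over the two sheets kills the hyperelliptic involution, so $a(x,\cdot)$ is a finite matrix-valued Mittag--Leffler distribution in $\lambda\in\bbC$; at a branch point not lying over $\operatorname{supp}h$ the potential pole of $P^{\tau_xq}$ is odd in the local parameter and cancels in the sum, so the support of $a$ lies in $\pi(\operatorname{supp}h)$. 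Since $\bbC$ is Stein, $H^1(\bbC,\calO)=0$, so $a(x,\cdot)$ is solvable; I would split it into its principal-part part $a_-$ (rational in $\lambda$, poles over $\pi(\operatorname{supp}h)$, vanishing at $\infty$) and an entire remainder $a_+$, both depending $W^{1,2}$-differentiably on $x$. For~\eqref{eq:ML:Phi:deltaalpha}: differentiating~\eqref{eq:spectral:P-trans} in $x$ gives $\tfrac{d}{dx}P^{\tau_xq}=[P^{\tau_xq},\alpha]$, hence $\tfrac{d}{dx}a=[a,\alpha]$; and since $F'=F\alpha$ one checks directly that $F\,\delta\alpha\,F^{-1}=\tfrac{d}{dx}(Fa_-F^{-1})$ for $\delta\alpha:=\tfrac{d}{dx}a_-+[\alpha,a_-]$. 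Subtracting $\tfrac{d}{dx}a_-+[\alpha,a_-]$ from $\tfrac{d}{dx}a=[a,\alpha]$ shows $\delta\alpha=[a_+,\alpha]-\tfrac{d}{dx}a_+$, which is \emph{entire} in $\lambda$; on the other hand, writing $\alpha=\tfrac\lambda2\varepsilon+\beta$ with $\beta$ independent of $\lambda$ and $a_-=c(x)/\lambda+O(\lambda^{-2})$, the expression $\tfrac{d}{dx}a_-+[\alpha,a_-]$ is \emph{bounded} as $\lambda\to\infty$, with limit $\tfrac12[\varepsilon,c(x)]$. An entire function bounded at $\infty$ is constant, so $\delta\alpha$ is independent of $\lambda$ and equal to the constant $-\tfrac12[\varepsilon,\Res(a\,\mathrm d\lambda)]$ of~\eqref{eq:ML:Phi:deltaalpha}. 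A commutator with $\varepsilon=\operatorname{diag}(\mi,-\mi)$ is off-diagonal and trace-free; and from~\eqref{eq:spectral:rho-P} together with $\rho_*\overline h=h$ one obtains $\overline{a(x,\overline\lambda)}=J\,a(x,\lambda)\,J^{-1}$ with $J=\varepsilon_++\varepsilon_-$, which passes to $a_-$ and forces the two off-diagonal entries of $\delta\alpha$ to be minus the complex conjugates of one another. Hence $\delta\alpha=\tfrac12\left(\begin{smallmatrix}0&\delta q\\ -\overline{\delta q}&0\end{smallmatrix}\right)$ with $\delta q\in L^2([0,T],\bbC)$ (in fact $W^{1,2}$), and $\Phi(h):=\delta q$ is $\bbR$-linear in $h$.

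\textbf{Part (ii).} From $F\,\delta\alpha\,F^{-1}=\tfrac{d}{dx}(Fa_-F^{-1})$, integrating over $[0,T]$ and using $a_-(T,\cdot)=a_-(0,\cdot)$ (true since $\tau_{x+T}q=\tau_xq$), $F(0)=\one$ and $F(T)=M$, the standard variation-of-monodromy formula $\delta M\cdot M^{-1}=\int_0^T F\,\delta\alpha\,F^{-1}\,\mathrm dx$ gives $\delta M=[M,a_-(0)]$. Since $\mu=\tr(PM)$ (Proposition~\ref{P:spectral:P}(iv)), $\delta\mu=\tr(\delta P\cdot M)+\tr(P\,\delta M)$, and $\tr(\delta P\cdot M)=0$ because $\tr P\equiv1$ and $P^2=P$; using $MP=PM=\mu P$ we get $\tfrac{\partial\mu}{\partial q}\Phi(h)=\tr\bigl(P[M,a_-(0)]\bigr)=\mu\tr(Pa_-(0))-\mu\tr(Pa_-(0))=0$. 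Differentiating $Mv=\mu v$ and inserting $\delta\mu=0$ yields $(M-\mu)\,\tfrac{\partial v}{\partial q}\Phi(h)=-\delta M\,v=-(M-\mu)a_-(0)v$, so $\tfrac{\partial v}{\partial q}\Phi(h)+a_-(0)v$ lies in $\ker(M-\mu)=\bbC v$ at generic points; modulo the scaling freedom of the eigenline section this says $\tfrac{\partial v}{\partial q}\Phi(h)=-a_-(0)v$.

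\textbf{Part (iii).} If $\Phi(h)=0$ then $\delta\alpha\equiv0$, so $Fa_-F^{-1}$ is constant in $x$ by the identity from part~(i); comparing $x=0$ and $x=T$ (and using $a_-(T)=a_-(0)$) gives $[M,a_-(0)]=0$. Hence $a_-(0,\lambda)$ preserves the eigenlines of $M(\lambda)$ wherever the two eigenvalues are distinct, i.e.\ $a_-(0)\,v=f\,v$ for a function $f$; since $a_-(0)$ is meromorphic with poles over $\pi(\operatorname{supp}h)$ and vanishes at $\infty$, $f$ is meromorphic on $\Sigma$ with $f=O(\lambda^{-1})$ near $\infty_\pm$. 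To see that $f$ solves $h$: evaluating $a(0,\lambda)=\sum h\,P^q$ on the section $v$ and using $P^qv=v$ on the sheet of $v$ and $P^qv=0$ on the other sheet (Proposition~\ref{P:spectral:P}(i)), one gets $a(0)\,v=h\cdot v$ as germs of $\bbC^2$-valued Mittag--Leffler distributions along $\Sigma$; therefore $h\cdot v=a_+(0)v+a_-(0)v=a_+(0)v+f\,v$ with $a_+(0)v$ holomorphic, so $(h-f)\,v$ is holomorphic, and dividing by a non-vanishing component of $v$ shows $h-f$ is holomorphic wherever $v\neq0$, i.e.\ $f$ has the same principal parts as $h$. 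Running the same argument on the conjugate sheet (where $h$ has no principal part) shows $f$ acquires no extra pole over $\pi(\operatorname{supp}h)$, so $f$ is genuinely a solution of $h$ with the required decay at $\infty_\pm$.

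\textbf{Main obstacle.} The crux is the $\lambda$-independence and the precise skew-Hermitian off-diagonal shape of $\delta\alpha=\tfrac{d}{dx}a_-+[\alpha,a_-]$ in part~(i): this is where the entire/principal-part splitting, the flow equation $\tfrac{d}{dx}a=[a,\alpha]$, the decay $a_-=O(\lambda^{-1})$ and the reality condition~\eqref{eq:spectral:rho-P} must be combined, and where one must control the behaviour of $P^{\tau_xq}$ near $\infty_\pm$ (via the monodromy asymptotics of~\cite[Theorem~2.1]{kleinkilian1}) should $\operatorname{supp}h$ be allowed to meet $\{\infty_+,\infty_-\}$. Once part~(i) is in hand, parts~(ii) and~(iii) are two-line computations.
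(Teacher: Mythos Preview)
Your proposal is correct and, for parts~(i) and~(iii), follows essentially the same line as the paper: define $a=\sum hP^{\tau_xq}$, use the Lax equation $\tfrac{d}{dx}a=[a,\alpha]$, split $a=a_++a_-$ with $a_-\to 0$ at $\infty$, and combine the entire/bounded dichotomy with the reality symmetry~\eqref{eq:spectral:rho-P} to pin down $\delta\alpha$. Your treatment of~(iii) is in fact more explicit than the paper's, which simply asserts that the scalar $f$ in $\delta v=fv$ solves $h$; your identity $a(0)v=h\cdot v$ (from $P^qv=v$ on one sheet and $=0$ on the other) makes this transparent.

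The one genuine difference is in part~(ii). The paper introduces the auxiliary function $\delta v(x)=-\mu^{x/T}a_-(x)F(x,\lambda)^{-1}v$, derives an inhomogeneous linear ODE for it, integrates, and compares the resulting relation $(M-\mu\one)\delta v=-\bigl(\tfrac{\partial M}{\partial q}\delta q\bigr)v$ with the differentiated eigenvalue equation. You instead go straight to the commutator identity $\delta M=[M,a_-(0)]$, obtained by integrating $F\delta\alpha F^{-1}=\tfrac{d}{dx}(Fa_-F^{-1})$ over a period, and then read off $\tfrac{\partial\mu}{\partial q}\Phi(h)=0$ from $\tr(P[M,a_-(0)])=0$ using $MP=PM=\mu P$ and $\tr(\delta P\cdot M)=0$. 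Your route is shorter and purely algebraic; the paper's route has the advantage of producing the pointwise object $\delta v(x)$ explicitly, which makes the connection to the Baker--Akhiezer function more visible, but for the statement at hand your argument suffices. Both approaches end with the same ``up to a multiple of $v$'' ambiguity in $\tfrac{\partial v}{\partial q}\Phi(h)=-a_-(0)v$, which reflects the scaling freedom of the eigenvector section.

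One small remark: your worry about $\operatorname{supp}h$ meeting $\{\infty_+,\infty_-\}$ does not arise here, since by definition $h\in H^1_{fin}(\Sigma,\calO)$ lives on the non-compact curve.
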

\begin{proof} \rm{(i)} Let \,$h \in H^1_{fin,\rho}(\Sigma,\calO)$\, be given. Because \,$\Sigma$\, is non-compact, every Mittag-Leffler distribution on \,$\Sigma$\, is solvable; in other words, we can regard
  \,$h$\, as a meromorphic function on \,$\Sigma$\, which has poles as prescribed by the Mittag-Leffler distribution. Of course, this meromorphic function is not unique, and in general,
  it will not be bounded near \,$\infty_\pm$\,.

  Let \,$a(x) := \sum h\,P^{\tau_x q}$\,, where the sum runs over the two sheets of \,$\Sigma$\,. The map \,$a$\, is obviously invariant under \,$\sigma$\, and therefore induces a \,$(2\times 2)$-matrix-valued,
  meromorphic function on \,$\bbC\ni\lambda$\,, which has poles at the values of \,$\lambda$\, below the points in the support of \,$h$\,, and possibly also at \,$\lambda=\infty$\,.
  Because of Equation~\eqref{eq:spectral:P-trans} we have
  $$ a(x) = F(x,\lambda)^{-1}\,a(0)\,F(x,\lambda) \; , $$
  and differentiating with respect to \,$x$\, shows that \,$a$\, solves the Lax equation
  \begin{equation}
    \label{eq:ML:Phi:ddx-a}
    \tfrac{\mathrm{d}a}{\mathrm{d}x} =[a,\,\alpha]\; .
  \end{equation}
  Let \,$a_L$\, be the principal part of the Laurent series of \,$a$\, at \,$\lambda=\infty$\,, and let \,$a_c$\, be the finite value of \,$a-a_L$\, at \,$\lambda=\infty$\,. 
  Then \,$a_+ := a_L + a_c$\, is an entire, \,$(2\times 2)$-matrix-valued
  function, and \,$a_-:= a - a_+$\, is a meromorphic function on \,$\bbC \ni \lambda$\, which tends to zero of order \,$O(\lambda^{-1})$\, for \,$\lambda\to\infty$\,, and we have \,$a = a_+ + a_-$\,. Because of equation~\eqref{eq:ML:Phi:ddx-a}
  we have
  \begin{equation}
    \label{eq:ML:r1:deltaalpha-def}
    \delta \alpha := \tfrac{\mathrm{d}\,a_- }{\mathrm{d}x}  + [\alpha,\,a_-]= - \left( \tfrac{\mathrm{d}\,a_+ }{\mathrm{d}x} + [\alpha,\,a_+] \right)  \; .
  \end{equation}
  The second representation of \,$\delta \alpha$\, shows that \,$\delta \alpha$\, is a \,$(2\times 2)$-matrix-valued, entire function in \,$\lambda\in \bbC$\,. On the other hand, the first representation shows that \,$\delta\alpha$\, is bounded near \,$\lambda=\infty$\,. Therefore \,$\delta \alpha$\, is constant in \,$\lambda$\,. 

For \,$\lambda\to\infty$\,, we have \,$a_-\to 0$\, and \,$\tfrac{\mathrm{d}\,a_- }{\mathrm{d}x} \to 0$\,. Moreover \,$\lambda\,a_- \to -\Res (a\,\mathrm{d}\lambda)$\,,
  which is a finite \,$(2\times 2)$-matrix. Because \,$\delta \alpha$\, does not depend on \,$\lambda\in \C$\,, we therefore have
  \begin{align*}
    \delta \alpha & = \lim_{\lambda\to\infty} \left( \tfrac{\mathrm{d}\,a_- }{\mathrm{d}x}  + [\alpha,\,a_-] \right) \overset{\eqref{eq:alpha_q}}{=} \tfrac12 \lim_{\lambda\to\infty} [\lambda \varepsilon + q\varepsilon_+ + \overline{q}\varepsilon_-,\,a_-] \\
    & = \tfrac12 \lim_{\lambda\to\infty} [\varepsilon,\, \lambda\,a_-] = -\tfrac12 [\varepsilon,\Res (a\,\mathrm{d}\lambda)] \; .
  \end{align*}
This shows that the matrix \,$\delta \alpha$\, is off-diagonal. 
  
Because of the hypothesis \,$\rho^*\overline{h}=h$\, and equation~\eqref{eq:spectral:rho-P} we have \,$\overline{a_{\overline{\lambda}}} = J\,a_\lambda\,J^{-1}$\, and therefore
  also \,$\overline{a_{\pm,\overline{\lambda}}} = J\,a_{\pm,\lambda}\,J^{-1}$\,. Because
  \,$\delta \alpha$\, does not depend on \,$\lambda$\,, we have by equation~\eqref{eq:ML:r1:deltaalpha-def}
  \begin{equation*} 
    \overline{\delta \alpha}
     = \overline{\delta \alpha}|_{\overline{\lambda}} = \tfrac{\mathrm{d}\,\overline{a_{-,\overline{\lambda}}} }{\mathrm{d}x}  + \overline{[\alpha_{\overline{\lambda}},\,a_{-,\overline{\lambda}}]} 
     = J\,\tfrac{\mathrm{d}\,a_{-,\lambda} }{\mathrm{d}x}\,J^{-1} + [J\,\alpha_\lambda\,J^{-1}\,,\, J\,a_{-,\lambda}\,J^{-1}] = J\,\delta \alpha|_\lambda\,J^{-1} = J\,\delta \alpha\,J^{-1} \; .
  \end{equation*}
  This shows that \,$\delta \alpha$\, is of the form \,$\tfrac12 \left( \begin{smallmatrix} 0 & \delta q \\ - \overline{\delta q} & 0 \end{smallmatrix} \right)$\, with some \,$\delta q \in L^2([0,T],\bbC)$\,. 
      
\rm{(ii)}  We consider the multi-valued map
  $$ \delta v(x) := -\mu^{x/T}\,a_-(x)\,F(x,\lambda)^{-1}\,v \quad\text{with}\quad \mu^{x/T} := \exp\bigr( \tfrac{x}{T}\,\ln(\mu)\bigr) \; . $$
Note that \,$\delta v := \delta v(0) = \delta v(T) = -a_-(0)\,v$\, holds. Using $\tfrac{\mathrm{d}\,F}{\mathrm{d}x} = F\,\alpha$, we have
  \begin{align*}
    \tfrac{\mathrm{d}\, \delta v}{\mathrm{d}x}
    & = \tfrac{\ln(\mu)}{T}\,\delta v - \mu^{x/T} \,\tfrac{\mathrm{d}\,a_-}{\mathrm{d}x}\,F^{-1}\,v + \mu^{x/T}\,a_-\,\alpha\,F^{-1}\,v 
     = \tfrac{\ln(\mu)}{T}\,\delta v - \mu^{x/T}\,\left( \tfrac{\mathrm{d}\,a_- }{\mathrm{d}x} - a_-\,\alpha \right) \, F^{-1}\,v \\
    & \overset{\eqref{eq:ML:r1:deltaalpha-def}}{=} \tfrac{\ln(\mu)}{T}\,\delta v - \mu^{x/T}\,\left( \delta\alpha - \alpha\,a_- \right) \, F^{-1}\,v 
     = \left( \tfrac{\ln(\mu)}{T}\one-\alpha \right) \delta v - \mu^{x/T}\,\delta\alpha\,F^{-1}\,v \; .
  \end{align*}
  This equation is an inhomogeneous linear differential equation for the function \,$\delta v$\,. The unique solution of this differential equation with the initial value \,$\delta v(0)=\delta v$\,
  is given by
  $$ \delta v(x) = \mu^{x/T}\,F(x,\lambda)^{-1}\, \left( -\int_0^x F(t,\lambda)\,\delta\alpha(t)\,F(t,\lambda)^{-1}\,v\,\mathrm{d}t + \delta v \right) \;. $$
  In particular, we have
  $$ \delta v = \delta v(T) = \mu\,M(\lambda)^{-1}\, \left( -\int_0^T F(t,\lambda)\,\delta\alpha(t)\,F(t,\lambda)^{-1}\,v\,\mathrm{d}t + \delta v \right) \;, $$
  whence
  \begin{equation}
    \label{eq:ML:Phi:magic}
    (M(\lambda)-\mu\one)\,\delta v = -\int_0^T F(t,\lambda)\,\delta\alpha(t)\,F(t,\lambda)^{-1}\,\mathrm{d}t \, \mu v
  \end{equation}
  follows. On the other hand, from the differential equation \eqref{eq:frame1} it follows that \,$G := \tfrac{\partial F}{\partial q}\, \delta q$\, solves the initial value problem
  \,$\tfrac{\mathrm{d}\ }{\mathrm{d}x} G = G\,\alpha + F\,\delta\alpha$\, with \,$G(0)=0$\,, and therefore we have
  $$ G(x,\lambda) = \int_0^x F(t,\lambda)\,\delta\alpha(t)\,F(t,\lambda)^{-1}\,\mathrm{d}t \, F(x,\lambda) \;, $$
  in particular
  $$ \frac{\partial M(\lambda)}{\partial q}\, \delta q = \int_0^T F(t,\lambda)\,\delta\alpha(t)\,F(t,\lambda)^{-1}\,\mathrm{d}t \, M(\lambda) \; . $$
  By applying this equation to Equation~\eqref{eq:ML:Phi:magic}, we obtain
  \begin{equation}
    \label{eq:ML:Phi:more-magic}
    (M(\lambda)-\mu\one)\,\delta v = -\left( \frac{\partial M(\lambda)}{\partial q}\, \delta q \right)\,v \; . 
  \end{equation}
  On the other hand, by differentiating the equation \,$(M(\lambda)-\mu\one)\,v=0$\, in the direction of \,$\delta q$\,, we obtain
  \begin{equation}
    \label{eq:ML:Phi:eigen-variation}
    (M(\lambda)-\mu\one)\,\left( \tfrac{\partial v}{\partial q}\, \delta q \right) = -\left( \tfrac{\partial M(\lambda)}{\partial q}\, \delta q \right)\,v + \left( \tfrac{\partial \mu}{\partial q}\, \delta q\right)\,v \; .      
  \end{equation}
  By comparing the symmetric and the anti-symmetric parts of Equations~\eqref{eq:ML:Phi:more-magic} and \eqref{eq:ML:Phi:eigen-variation}, we obtain \,$\tfrac{\partial \mu}{\partial q}\,\delta q = 0$\,
  and \,$\tfrac{\partial v}{\partial q}\, \delta q = \delta v = -a_-(0)\,v$\,. 

\rm{(iii)}  If \,$\delta q=\Phi(h)=0$\, holds, then we have \,$(M - \mu\one)\,\delta v=0$\, by Equation~\eqref{eq:ML:Phi:more-magic}, so that \,$\delta v$\, is an
  eigenvector of \,$M$\, for the eigenvalue \,$\mu$\,. Because the eigenspaces of \,$M$\, are generically one-dimensional, there exists a meromorphic function \,$f$\, on \,$\Sigma$\,
  with \,$\delta v = f\, v$\,. Because of the construction of \,$\delta v = -a_-(0)\,v$\,, the function \,$f$\, is a solution of the Mittag-Leffler distribution \,$h$\, which tends to zero of order \,$O(\lambda^{-1})$\,
  near \,$\infty_\pm$\,. 
\end{proof}

\begin{proposition} \label{P:ML:Psi}
\rm{(i)} If \,$\delta q \in L^2([0,T],\bbC)$\, then \,$\Psi(\delta q) := \tfrac{1}{\mu}\left( \tfrac{\partial \mu}{\partial q}\, \delta q \right) \mathrm{d}\lambda$\, is a regular 1-form on \,$\Sigma$\,
    which satisfies \,$\rho^* \overline{\Psi(\delta q)}=\Psi(\delta q)$\,. 
    We thus obtain an \,$\bbR$-linear map
    $$ \Psi: L^2([0,T],\bbC) \to H^0_\rho(\Sigma,\Omega) \; . $$
\rm{(ii)} For \,$\delta q \in L^2([0,T],\bbC)$\, we have
    \begin{equation}
      \label{eq:ML:Psi:intformula}
      \Psi(\delta q) = \left( \int_0^T \tr(P^{\tau_t q}\, \delta \alpha(t))\,\mathrm{d}t \right) \mathrm{d}\lambda \quad\text{with}\quad \delta\alpha := \frac12 \begin{pmatrix} 0 & \delta q \\ -\overline{\delta q} & 0 \end{pmatrix} \; .
    \end{equation}
\end{proposition}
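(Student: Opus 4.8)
The plan is to establish (ii) by a direct variational computation, and then to deduce (i) from (ii) together with an identity that exhibits $\Psi(\delta q)$ as a manifestly regular form.

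First I would record two variational identities. Differentiating the identity $\mu=\tr(P\,M)$ of Proposition~\ref{P:spectral:P}(iv) in the direction of $\delta q$ and using $M v=\mu v$ together with $w^t M=\mu w^t$ — so that the terms containing $\tfrac{\partial P}{\partial q}\delta q$ cancel against the variation of the normalising factor $w^t v$ — gives $\tfrac{\partial \mu}{\partial q}\delta q=\tr\!\bigl(P\,\tfrac{\partial M}{\partial q}\delta q\bigr)$; differentiating $\Delta=\tr M$ gives $\tfrac{\partial \Delta}{\partial q}\delta q=\tr\!\bigl(\tfrac{\partial M}{\partial q}\delta q\bigr)$. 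Next I would observe that the computation in the proof of Proposition~\ref{P:ML:Phi}(ii) applies verbatim to an \emph{arbitrary} variation $\delta q$, with the associated $\delta\alpha=\tfrac12\left(\begin{smallmatrix}0&\delta q\\-\overline{\delta q}&0\end{smallmatrix}\right)$, and so $\tfrac{\partial M}{\partial q}\delta q=\bigl(\int_0^T F(t,\lambda)\,\delta\alpha(t)\,F(t,\lambda)^{-1}\,\mathrm dt\bigr)\,M(\lambda)$, which is in particular an entire function of $\lambda$. Substituting this into $\tfrac{\partial\mu}{\partial q}\delta q=\tr(P\,\tfrac{\partial M}{\partial q}\delta q)$, rearranging by cyclicity of the trace, using $M P=P M=\mu P$ to pull a factor $\mu$ out from under the integral, and rewriting $F(t,\lambda)^{-1}\,P\,F(t,\lambda)=P^{\tau_t q}$ by~\eqref{eq:spectral:P-trans}, one obtains $\tfrac1\mu\bigl(\tfrac{\partial\mu}{\partial q}\delta q\bigr)=\int_0^T\tr\!\bigl(P^{\tau_t q}\,\delta\alpha(t)\bigr)\,\mathrm dt$, which is formula~\eqref{eq:ML:Psi:intformula}; this is (ii).

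For (i) I would combine the two identities via $\mu+\mu^{-1}=\Delta$: differentiating in the direction $\delta q$ gives $(1-\mu^{-2})\,\tfrac{\partial\mu}{\partial q}\delta q=\tfrac{\partial\Delta}{\partial q}\delta q$, hence on $\Sigma$ (first on the dense open set where the two sheets are distinct, then everywhere by meromorphy)
$$ \Psi(\delta q)=\frac1\mu\Bigl(\frac{\partial\mu}{\partial q}\delta q\Bigr)\,\mathrm d\lambda=\Bigl(\frac{\partial\Delta}{\partial q}\delta q\Bigr)\,\frac{\mathrm d\lambda}{\mu-\mu^{-1}}\,. $$
Here $\tfrac{\partial\Delta}{\partial q}\delta q$ is the pullback to $\Sigma$ of an entire function of $\lambda$, and $\tfrac{\mathrm d\lambda}{\mu-\mu^{-1}}$ is, up to sign, the standard holomorphic differential of the eigenline curve over $\bbC$: away from the branch points of the covering $\Sigma\to\bbC$ it is holomorphic and nowhere zero since $\mu\neq 0$ and $\mu\neq\mu^{-1}$, at a branch point the simple zero of $\mu-\mu^{-1}$ is absorbed into the zero of $\mathrm d\lambda$ in the local parameter, and at the singular points of the spectral curve the required behaviour is exactly the one built into the definition of the eigenline curve $\widetilde\Sigma$ (local freeness of the eigenbundle), cf.~\cite{klss2016} and \cite{klein-habil}. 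Since $\Sigma$ here denotes the \emph{non-compact} eigenline curve over $\bbC$, no condition at the points over $\lambda=\infty$ enters, and we conclude that $\Psi(\delta q)$ is a regular $1$-form on $\Sigma$; it is $\bbR$-linear in $\delta q$ because $\delta\alpha$, and hence $\tfrac{\partial\Delta}{\partial q}\delta q$, is. The reality $\rho^*\overline{\Psi(\delta q)}=\Psi(\delta q)$ then follows from $\rho^*(\mathrm d\overline\lambda)=\mathrm d\lambda$, $\mu\circ\rho=\overline\mu$, and $\overline{\bigl(\tfrac{\partial\Delta}{\partial q}\delta q\bigr)(\overline\lambda)}=\bigl(\tfrac{\partial\Delta}{\partial q}\delta q\bigr)(\lambda)$; the last equality is a consequence of~\eqref{eq:trace}, the reality~\eqref{eq:reality} of the frame, and the elementary identity $\overline{\delta\alpha}=J\,\delta\alpha\,J^{-1}$ with $J=\varepsilon_++\varepsilon_-$.

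The computations behind (ii), and the reality and linearity in (i), are routine. The one genuinely delicate point I anticipate is the regularity assertion of (i) at the branch points and the singular points of the spectral curve: there $P^{\tau_t q}$ is itself singular, so the cancellation of poles cannot be read off from the integral formula of (ii) directly, but must be extracted from the structure of the eigenline curve recalled earlier — and that is where the main work lies.
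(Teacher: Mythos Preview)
Your proof is correct and, for (ii), essentially the same as the paper's: the paper simply cites \cite[Lemma~6.3]{kleinkilian1} for the identity $\tfrac{\partial\mu}{\partial q}\delta q=\tfrac{\mu}{w^t v}\int_0^T w(t)^t\,\delta\alpha(t)\,v(t)\,\mathrm dt$ and then rewrites the integrand as $\tr(P^{\tau_t q}\,\delta\alpha)$ via Proposition~\ref{P:spectral:P}(iv), whereas you re-derive that identity by differentiating $\mu=\tr(PM)$ and using the variation-of-constants formula for $\tfrac{\partial M}{\partial q}\delta q$ already obtained in the proof of Proposition~\ref{P:ML:Phi}(ii). For (i), the paper's proof is a single line establishing only the reality condition $\rho^*\overline{\Psi(\delta q)}=\Psi(\delta q)$ directly from $\rho(\lambda,\mu)=(\overline\lambda,\overline\mu)$; you instead pass through the identity $\Psi(\delta q)=\bigl(\tfrac{\partial\Delta}{\partial q}\delta q\bigr)\tfrac{\mathrm d\lambda}{\mu-\mu^{-1}}$ and argue regularity from the structure of the eigenline curve. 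Your treatment of (i) is thus more detailed than the paper's own proof, which does not explicitly address the regularity claim at all --- so the delicacy you flag at the end is a point the paper passes over rather than one it resolves differently.
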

\begin{proof} (i) The equation \,$\rho^* \overline{\Psi(\delta q)}=\Psi(\delta q)$\, follows directly from the definitions of \,$\Psi$\, and \,$\rho(\lambda,\mu)=(\overline{\lambda},\overline{\mu})$\,. 

(ii) It was shown in \cite[Lemma~6.3]{kleinkilian1} that
  $$ \frac{\partial \mu}{\partial q}\, \delta q = \frac{\mu}{w^t\, v} \int_0^T w(t)^t \, \delta\alpha(t)\, v(t)\,\mathrm{d}t $$
  holds, where \,$v(t) := F(t)^{-1}v$\, and \,$w(t) := F(t)^tw$\,. By the equality \,$w^t \, v = w(t)^t \, v(t)$\, and Proposition~\ref{P:spectral:P}(iv), we thus obtain
  $$  \frac{\partial \mu}{\partial q}\, \delta q
    = \mu \, \int_0^T \frac{w(t)^t \, \delta\alpha(t)\, v(t)}{w(t)^t \, v(t)}\,\mathrm{d}t 
    = \int_0^T \tr(\,P(\tau_t q)\, \delta \alpha(t) \,)\,\mathrm{d}t \; , $$
and equation~\eqref{eq:ML:Psi:intformula} follows.
\end{proof}

\begin{proposition}
  \label{P:ML:Res}  
  For \,$h \in H^1_{fin,\rho}(\Sigma,\calO)$\, and \,$\delta q \in L^2([0,T],\bbC)$\, we have
  $$ \Res(h,\Psi(\delta q)) = \Omega(\Phi(h),\delta q) \; . $$
\end{proposition}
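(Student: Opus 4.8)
The plan is to compute both sides of the claimed identity using the explicit formulas established in the preceding propositions and match them via a residue computation at $\infty_\pm$. On the left-hand side, we have $\Psi(\delta q) = \bigl(\int_0^T \tr(P^{\tau_t q}\,\delta\alpha(t))\,\mathrm{d}t\bigr)\,\mathrm{d}\lambda$ by Proposition~\ref{P:ML:Psi}(ii), where $\delta\alpha = \tfrac12\left(\begin{smallmatrix} 0 & \delta q \\ -\overline{\delta q} & 0\end{smallmatrix}\right)$. Hence $h\,\Psi(\delta q) = \bigl(\int_0^T \tr(h\,P^{\tau_t q}\,\delta\alpha(t))\,\mathrm{d}t\bigr)\,\mathrm{d}\lambda = \bigl(\int_0^T \tr(a(t)\,\delta\alpha(t))\,\mathrm{d}t\bigr)\,\mathrm{d}\lambda$ after summing over the two sheets and recognizing $a = \sum h\,P^{\tau_x q}$ from Proposition~\ref{P:ML:Phi}(i). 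By definition, $\Res(h,\Psi(\delta q))$ is the sum of residues of $h\,\Psi(\delta q)$ at $\infty_\pm$, which, since only the principal part and the $\lambda^{-1}$-coefficient of $a$ contribute to $\oint (\cdot)\,\mathrm{d}\lambda$ and $\delta\alpha$ is $\lambda$-independent, equals $-\int_0^T \tr\bigl(\Res(a(t)\,\mathrm{d}\lambda)\,\delta\alpha(t)\bigr)\,\mathrm{d}t$ (with a sign depending on the orientation convention at $\infty$).

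Next I would rewrite this using Proposition~\ref{P:ML:Phi}(i): there we have $\delta\alpha_h := \Phi(h)$-associated matrix $= -\tfrac12[\varepsilon,\,\Res(a\,\mathrm{d}\lambda)]$, so $\Res(a\,\mathrm{d}\lambda)$ is determined by $\delta\alpha_h$ up to a diagonal part. Writing $\Res(a\,\mathrm{d}\lambda) = r_0\varepsilon + r_+\varepsilon_+ + r_-\varepsilon_-$ and using the commutator relations \eqref{eq:commutators}, one computes $-\tfrac12[\varepsilon,\Res(a\,\mathrm{d}\lambda)] = -\mathbbm{i}(r_+\varepsilon_+ - r_-\varepsilon_-)$, which identifies the off-diagonal entries of $\Res(a\,\mathrm{d}\lambda)$ with $\Phi(h)$ (up to explicit constants), while the diagonal part $r_0\varepsilon$ is annihilated. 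Crucially, $\delta\alpha(t)$ is purely off-diagonal, so $\tr(\Res(a\,\mathrm{d}\lambda)\,\delta\alpha(t)) = \tr\bigl((r_+\varepsilon_+ + r_-\varepsilon_-)\,\delta\alpha(t)\bigr)$ — the diagonal part drops out of the trace pairing against an off-diagonal matrix. Expanding in the basis, with $\delta q(t)$ the entries of $\delta\alpha(t)$ and $\Phi(h)$ the entries coming from $r_\pm$, one gets an integrand of the form $\mathrm{const}\cdot\mathrm{Im}\bigl(\Phi(h)(t)\,\overline{\delta q(t)}\bigr)$, matching $\Omega(\Phi(h),\delta q) = \tfrac12\int_0^T \mathrm{Im}(\Phi(h)(t)\,\overline{\delta q(t)})\,\mathrm{d}t$ after pinning down the constant.

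The main obstacle I anticipate is bookkeeping rather than conceptual: keeping the residue sign conventions at $\infty_\pm$ consistent (a residue "at infinity" flips orientation), correctly tracking the factors of $\tfrac12$ and $\mathbbm{i}$ introduced by the normalization $\langle\varepsilon,\varepsilon\rangle = -\tfrac12\tr\varepsilon^2 = 1$ and by the definitions of $\alpha^q$, $\delta\alpha$, and $\Omega$, and verifying that the two-sheeted sum $\sum$ interacts correctly with $\Res$ at the two points $\infty_+,\infty_-$ (using that $a$ is $\sigma$-invariant so it descends to a single meromorphic matrix function on the $\lambda$-plane, whence $\sum_{\infty_\pm}\Res = \Res_{\lambda=\infty}$). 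I would also double-check that the reality constraints ($\rho^*\overline{h}=h$ forcing $\overline{a_{\overline\lambda}} = J\,a_\lambda\,J^{-1}$, hence the off-diagonal entries of $\Res(a\,\mathrm{d}\lambda)$ are complex-conjugate-paired in exactly the way that turns the bilinear trace into $\tfrac12\mathrm{Im}(\cdots)$) are used at the right point. Once the constants are fixed by testing on a single Fourier mode $\delta q(t) = e^{2\pi\mathbbm{i}kt/T}$, the identity follows.
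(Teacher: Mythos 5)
Your plan is correct and follows essentially the same route as the paper's proof: apply Proposition~\ref{P:ML:Psi}(ii), rewrite the residue pairing through $a=\sum h\,P^{\tau_x q}$, use Equation~\eqref{eq:ML:Phi:deltaalpha} to identify the off-diagonal part of $\Res(a\,\mathrm{d}\lambda)$ with $\Phi(h)$ (the diagonal part dropping out against the off-diagonal $\delta\alpha$), and evaluate the trace to get $\tfrac12\int_0^T\mathrm{Im}\bigl(\Phi(h)\,\overline{\delta q}\bigr)\,\mathrm{d}t$. The constants you computed (e.g.\ the off-diagonal part being $\tfrac{\mi}{2}\bigl(\begin{smallmatrix}0&\Phi(h)\\ \overline{\Phi(h)}&0\end{smallmatrix}\bigr)$) agree with the paper, so no Fourier-mode calibration is needed.
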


\begin{proof}
  Let \,$h \in H^1_{fin,\rho}(\Sigma,\calO)$\, and \,$\delta q \in L^2([0,T],\bbC)$\, be given, and put
  \,$\delta \alpha := \tfrac12 \left( \begin{smallmatrix} 0 & \delta q \\ -\overline{\delta q} & 0 \end{smallmatrix} \right)$\,.
  We define \,$a(x) := \sum h\, P^{\tau_x q}$\, as in Proposition~\ref{P:ML:Phi}. 
  By Proposition~\ref{P:ML:Psi}(ii) we then have
  \begin{align*}
    \Res(h,\Psi(\delta q))
    &=\; \Res\left( \, h \,,\, \left( \int_0^T \tr\bigr( P^{\tau_t q}\,\delta\alpha(t) \bigr) \,\mathrm{d}t \right)\,\mathrm{d}\lambda \right) 
    = \int_0^T \tr\bigr( \Res_\Sigma(h\,P^{\tau_t q},\mathrm{d}\lambda)\, \delta\alpha(t)\bigr) \,\mathrm{d}t \\
    &=\;  \int_0^T \tr\bigr( \Res_{\bbC}(\sum h\,P^{\tau_t q}\,\mathrm{d}\lambda)\, \delta\alpha(t)\bigr) \,\mathrm{d}t 
    = \int_0^T \tr\bigr( \Res_{\bbC}(a(t)\,\mathrm{d}\lambda)\, \delta\alpha(t)\bigr) \,\mathrm{d}t \; . 
  \end{align*}
  Because \,$\delta\alpha$\, is an off-diagonal \,$(2\times 2)$-matrix, the value of the trace in the above expression depends among the entries of \,$\Res_{\bbC}(a\,\mathrm{d}\lambda)$\,
  only on the off-diagonal ones. The off-diagonal entries of \,$\Res_{\bbC}(a\,\mathrm{d}\lambda)$\, are equal to the off-diagonal entries of the matrix
  $$ -\frac14[\varepsilon,[\varepsilon,\,\Res_{\bbC}(a\,\mathrm{d}\lambda)]] = \frac12\left[\varepsilon,\,\frac12 \begin{pmatrix} 0 & \Phi(h) \\ -\overline{\Phi(h)} & 0 \end{pmatrix} \right] = \frac{\mi}{2} \, \begin{pmatrix} 0 & \Phi(h) \\ \overline{\Phi(h)} & 0 \end{pmatrix} \; , $$
  where the first equals sign follows from Equation~\eqref{eq:ML:Phi:deltaalpha}. Thus we obtain
  \begin{align*}
    \Res(h,\Psi(\delta q))
    &=\; \int_0^T \tr\left( \frac{\mi}{2} \begin{pmatrix} 0 & \Phi(h) \\ \overline{\Phi(h)} & 0 \end{pmatrix} \, \frac12 \begin{pmatrix} 0 & \delta q \\ -\overline{\delta q} & 0 \end{pmatrix} \right)\,\mathrm{d}t \\
    &=\; \frac{\mi}{4} \int_0^T \bigr( {-}\Phi(h)\,\overline{\delta q} + \overline{\Phi(h)}\,\delta q \bigr) \,\mathrm{d}t 
    = \frac12 \int_0^T \mathrm{Im}\bigr( \Phi(h)\,\overline{\delta q}\bigr) \,\mathrm{d}t = \Omega(\Phi(h),\delta q) \; . 
  \end{align*}
\end{proof}

\begin{proposition}
  \label{P:ML:solvable}
  Let \,$h \in H^1_{fin}(\Sigma,\calO)$\, be a finite Mittag-Leffler distribution. If
  \begin{equation}
    \label{eq:ML:solvable:hyp}
    \Res(h,\Psi(\delta q))=0 \quad\text{holds for every \,$\delta q \in L^2([0,T],\bbC)$\,,}
  \end{equation}
  then \,$h$\, is solvable by means of a meromorphic function \,$f$\, on \,$\Sigma$\, which tends to zero of order \,$O(\lambda^{-1})$\, near \,$\infty_\pm$\,. 
\end{proposition}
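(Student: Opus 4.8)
The plan is to deduce this from Propositions~\ref{P:ML:Phi}, \ref{P:ML:Psi} and~\ref{P:ML:Res}, after reducing to the case \,$h \in H^1_{fin,\rho}(\Sigma,\calO)$\, by splitting \,$h$\, into its \,$\rho$-real and \,$\rho$-imaginary parts. The conjugation \,$C: H^1_{fin}(\Sigma,\calO) \to H^1_{fin}(\Sigma,\calO)$\,, \,$h \mapsto \rho_*\overline{h}$\,, is an anti-linear involution preserving finiteness of the support (it carries the divisor of admissible poles to its image under the homeomorphism \,$\rho$\,), so \,$H^1_{fin}(\Sigma,\calO) = H^1_{fin,\rho}(\Sigma,\calO) \oplus \mi\, H^1_{fin,\rho}(\Sigma,\calO)$\, as real vector spaces. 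I would write \,$h = h_1 + \mi\, h_2$\, with \,$h_1 := \tfrac12(h + \rho_*\overline{h})$\, and \,$h_2 := \tfrac{1}{2\mi}(h - \rho_*\overline{h})$\,, both lying in \,$H^1_{fin,\rho}(\Sigma,\calO)$\,.

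First I would check that the hypothesis~\eqref{eq:ML:solvable:hyp} descends to each summand. Fix \,$\delta q \in L^2([0,T],\bbC)$\,. By Proposition~\ref{P:ML:Psi}(i), \,$\Psi(\delta q) \in H^0_\rho(\Sigma,\Omega)$\,, so by the remark preceding Proposition~\ref{P:ML:Phi} the numbers \,$\Res(h_1,\Psi(\delta q))$\, and \,$\Res(h_2,\Psi(\delta q))$\, are real. Since \,$\Res(\cdot,\cdot)$\, is \,$\bbC$-linear in its first argument, \eqref{eq:ML:solvable:hyp} yields
$$ \Res(h_1,\Psi(\delta q)) + \mi\, \Res(h_2,\Psi(\delta q)) = \Res(h,\Psi(\delta q)) = 0 \;, $$
and because both summands are real this forces \,$\Res(h_1,\Psi(\delta q)) = \Res(h_2,\Psi(\delta q)) = 0$\, for every \,$\delta q$\,.

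Next I would invoke the commuting diagram: by Proposition~\ref{P:ML:Res} we have \,$\Omega(\Phi(h_j),\delta q) = \Res(h_j,\Psi(\delta q)) = 0$\, for \,$j = 1,2$\, and all \,$\delta q \in L^2([0,T],\bbC)$\,. Since the real symplectic pairing \,$\Omega$\, on \,$L^2([0,T],\bbC)$\, is non-degenerate, this forces \,$\Phi(h_1) = \Phi(h_2) = 0$\,. Proposition~\ref{P:ML:Phi}(iii) then provides, for each \,$j$\,, a meromorphic function \,$f_j$\, on \,$\Sigma$\, solving the Mittag-Leffler distribution \,$h_j$\, with \,$f_j = O(\lambda^{-1})$\, near \,$\infty_\pm$\,. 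Finally \,$f := f_1 + \mi\, f_2$\, is meromorphic on \,$\Sigma$\,, has the same principal parts as \,$h_1 + \mi\, h_2 = h$\,, and still satisfies \,$f = O(\lambda^{-1})$\, near \,$\infty_\pm$\,; this is the desired solution.

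The delicate point is really the reduction to the \,$\rho$-invariant case: one must verify that \,$C$\, maps \,$H^1_{fin}(\Sigma,\calO)$\, into itself, so that \,$h_1,h_2$\, are again \emph{finite} Mittag-Leffler distributions to which \,$\Phi$\, and Proposition~\ref{P:ML:Phi}(iii) apply, and that the vanishing of \,$\Res(h,\Psi(\delta q))$\, splits into the vanishing of \,$\Res(h_j,\Psi(\delta q))$\,, for which the real-valuedness of the residue pairing on the \,$\rho$-fixed subspaces is exactly what is needed. Everything else is a direct consequence of the three preceding propositions, with the non-degeneracy of \,$\Omega$\, — rather than that of the residue pairing, which is unavailable here since \,$\Psi$\, need not surject onto \,$H^0(\Sigma,\Omega)$\, — carrying out the essential step.
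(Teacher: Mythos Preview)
Your proof is correct and follows essentially the same approach as the paper: split \,$h$\, into \,$\rho$-real and \,$\rho$-imaginary parts, use the real-valuedness of the residue pairing on the \,$\rho$-fixed subspaces together with the hypothesis to deduce \,$\Res(h_j,\Psi(\delta q))=0$\,, then apply Proposition~\ref{P:ML:Res}, the non-degeneracy of \,$\Omega$\,, and Proposition~\ref{P:ML:Phi}(iii) to each summand. The only cosmetic difference is that the paper first treats the \,$\rho$-invariant case and then performs the splitting, and it derives \,$\Res(h_j,\Psi(\delta q))=\mathrm{Re}/\mathrm{Im}\bigl(\Res(h,\Psi(\delta q))\bigr)$\, by a direct computation using \,$\rho^*\overline{\Psi(\delta q)}=\Psi(\delta q)$\, rather than invoking the stated real-valuedness; your route is slightly cleaner.
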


\begin{proof}
  Let us first suppose that \,$h\in H^1_{fin,\rho}(\Sigma,\calO)$\,, i.e.~\,$\rho_* \overline{h}=h$\, holds. Then the hypothesis \eqref{eq:ML:solvable:hyp} implies by Proposition~\ref{P:ML:Res}
  that \,$\Omega(\Phi(h),\delta q)=0$\, holds for every \,$\delta q \in L^2([0,T],\bbC)$\,. Because \,$\Omega$\, is non-degenerate, \,$\Phi(h)=0$\, follows. Therefore
  Proposition~\ref{P:ML:Phi}(iii) shows that the Mittag-Leffler distribution \,$h$\, can be solved by means of a meromorphic function \,$f$\, on \,$\Sigma$\, which tends to zero of order \,$O(\lambda^{-1})$\, near \,$\infty_\pm$\,.

  Now we consider an arbitrary \,$h\in H^1_{fin}(\Sigma,\calO)$\,. Then let us define
  $$ h_1 := \frac12\,(h+\rho_* \overline{h}) \quad\text{and}\quad h_2 := -\frac{\mi}{2}\,(h - \rho_* \overline{h}) \; . $$
  With this choice we have \,$h = h_1 + \mi\,h_2$\, and \,$\rho_* \overline{h_k} = h_k$\, for \,$k\in \{1,2\}$\,. Moreover, for any \,$\delta q \in L^2([0,T],\bbC)$\, we have due to
  the equation \,$\rho^* \overline{\Psi(\delta q)} = \Psi(\delta q)$\, (see Proposition~\ref{P:ML:Psi}(i)) and the hypothesis \eqref{eq:ML:solvable:hyp}
  \begin{align*}
    \Res\bigr(h_1,\Psi(\delta q)\bigr) & = \Res\bigr( \tfrac12(h+\rho_* \overline{h} ),\Psi(\delta q)\bigr) = \tfrac12\bigr( \Res(h,\Psi(\delta q)) + \Res(\rho_* \overline{h},\Psi(\delta q))\bigr) \\
                                                    & = \tfrac12\bigr( \Res(h,\Psi(\delta q)) + \overline{\Res(h,\rho^*\overline{\Psi(\delta q)})}\bigr)
                                                      = \tfrac12\bigr( \Res(h,\Psi(\delta q)) + \overline{\Res(h,\Psi(\delta q))}\bigr)\\
                                                    & = \mathrm{Re}\bigr( \Res(h,\Psi(\delta q)) \bigr) = 0
  \end{align*}
  and similarly
  $$ \Res\bigr(h_2,\Psi(\delta q)\bigr) = \mathrm{Im}\bigr( \Res(h,\Psi(\delta q)) \bigr) = 0 \; . $$
  The first part of the proof therefore shows that for \,$k\in \{1,2\}$\,, there exists a meromorphic function \,$f_k$\, on \,$\Sigma$\, which tends to zero of order \,$O(\lambda^{-1})$\, near \,$\infty_\pm$\, and solves the
  Mittag-Leffler distribution \,$h_k$\,. The meromorphic function 
  \,$f := f_1 + \mi\,f_2$\, on \,$\Sigma$\, then tends to zero of order \,$O(\lambda^{-1})$\, near \,$\infty_\pm$\, and solves the Mittag-Leffler distribution \,$h$\,. 
\end{proof}

\section{The closing condition for curves in $\bbH^3$}
\label{Se:closing}

\begin{lemma}
\label{lem:closing}
Let \,$q\in L^2([0,T],\bbC)$\, be periodic and \,$M$\, be the corresponding monodromy and \,$\Delta := \tr M$\,.
Also let \,$\theta\in\R$\,. Then the closing condition for \,$\bbH^3$\, in Remark~\ref{th:monodromy-remarks}(vi) for \,$\theta$\,
is equivalent to the following condition:
$$ \Delta(\mi+\theta)=\pm 2 \quad\text{and}\quad \text{\,$M(\mi+\theta)$\, is semisimple.} $$
If this condition holds, then we have \,$\Delta'(\mi+\theta)=0$\,. 
\end{lemma}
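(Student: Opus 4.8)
The plan is to prove the two implications of the equivalence separately and then deduce the addendum, using only elementary $2\times 2$ linear algebra together with the reality condition \eqref{eq:mreality}. The implication from the closing condition of Remark~\ref{th:monodromy-remarks}(vi) to the stated condition is immediate: if $M(\mi+\theta)=\pm\mathbbm{1}$ then $\Delta(\mi+\theta)=\tr(\pm\mathbbm{1})=\pm 2$, and a scalar matrix is trivially semisimple.

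For the converse I would argue as follows. Assume $\Delta(\mi+\theta)=\pm 2$ and $M(\mi+\theta)$ is semisimple. Since $M(\mi+\theta)\in\mathrm{SL}_2(\bbC)$, its eigenvalues are of the form $\mu,\mu^{-1}$ with $\mu+\mu^{-1}=\Delta(\mi+\theta)=\pm 2$, which forces $\mu=\mu^{-1}=\pm 1$ (compare Remark~\ref{th:monodromy-remarks}(v)). A diagonalisable element of $\mathrm{SL}_2(\bbC)$ whose only eigenvalue is $\pm 1$ must equal $\pm\mathbbm{1}$, hence $M(\mi+\theta)=\pm\mathbbm{1}$. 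To obtain the value at the other Sym point, note that $\theta\in\R$ gives $\overline{\mi+\theta}=-\mi+\theta$, so the reality condition \eqref{eq:mreality} with $\lambda=\mi+\theta$ reads $\overline{M(-\mi+\theta)}^t=M(\mi+\theta)^{-1}=(\pm\mathbbm{1})^{-1}=\pm\mathbbm{1}$; taking conjugate transposes and using that $\pm 1$ is real yields $M(-\mi+\theta)=\pm\mathbbm{1}$ with the same sign. Thus $M(\mi+\theta)=M(-\mi+\theta)=\pm\mathbbm{1}$, which is exactly the closing condition.

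For the final assertion $\Delta'(\mi+\theta)=0$, I would differentiate the identity $\det M(\lambda)\equiv 1$ by Jacobi's formula $\tfrac{d}{d\lambda}\det M=\det M\cdot\tr(M^{-1}M')$, obtaining $\tr\bigl(M(\lambda)^{-1}M'(\lambda)\bigr)=0$ for all $\lambda$. Evaluating at $\lambda=\mi+\theta$, where $M(\mi+\theta)^{-1}=\pm\mathbbm{1}$, gives $0=\pm\tr\bigl(M'(\mi+\theta)\bigr)=\pm\Delta'(\mi+\theta)$, hence $\Delta'(\mi+\theta)=0$. (Equivalently, one can expand $M(\lambda)=\pm\mathbbm{1}+(\lambda-\mi-\theta)M_1+O((\lambda-\mi-\theta)^2)$ and read off $\tr M_1=0$ from the vanishing of the first-order term of $\det M(\lambda)-1$.)

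There is no serious obstacle here; the only points requiring a little care are matching the sign at the two Sym points through \eqref{eq:mreality}, and observing that the substantive content of the equivalence is the elementary fact that a semisimple $\mathrm{SL}_2(\bbC)$-matrix with the repeated eigenvalue $\pm 1$ is already equal to $\pm\mathbbm{1}$ — so that the condition $\Delta(\mi+\theta)=\pm 2$ by itself, without semisimplicity, would \emph{not} suffice for the closing condition.
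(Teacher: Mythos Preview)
Your proof is correct, and the equivalence argument is essentially the same as the paper's: both reduce the two-point closing condition to the single condition $M(\mi+\theta)=\pm\one$ via the reality condition \eqref{eq:mreality}, and both note that a semisimple element of $\mathrm{SL}_2(\bbC)$ with trace $\pm 2$ is necessarily $\pm\one$.

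For the addendum $\Delta'(\mi+\theta)=0$, your route via Jacobi's formula is different from the paper's and somewhat slicker. The paper instead writes $M=\left(\begin{smallmatrix} a & b \\ c & d \end{smallmatrix}\right)$, uses $ad-bc=1$ to derive the identity
\[
\Delta^2-4=(a-d)^2+4bc,
\]
and observes that when $M(\mi+\theta)=\pm\one$ the functions $a-d$, $b$, $c$ all vanish at $\mi+\theta$, so $\Delta^2-4$ has a zero of order at least $2$ there; differentiating then gives $\Delta'(\mi+\theta)=0$. Your argument reaches $\Delta'=0$ more directly and without choosing matrix coordinates. On the other hand, the paper's identity is not wasted effort: it is invoked again later (in the proof of Lemma~\ref{L:dressing:magic}) to bound the vanishing order of $M-\tfrac12\Delta\,\one$ in terms of that of $\Delta^2-4$, so establishing it here has a payoff downstream.
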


\begin{proof}
We have \,$M(-\mi+\theta)=(\overline{M(\mi+\theta)}^t)^{-1}$\, by Remark~\ref{th:monodromy-remarks}(ii),
and therefore the closing condition for \,$\bbH^3$\,, \,$M(\mi+\theta)=M(-\mi+\theta)=\pm\one$\,, is already implied by the simpler condition \,$M(\mi+\theta)=\pm \one$\,. The latter condition is in turn
equivalent to the condition that \,$\mu(\mi+\theta)=\pm 1$\, and \,$M(\mi+\theta)$\, is semisimple; note that \,$M(\mi+\theta)$\, is not automatically semisimple. The equation \,$\mu(\mi+\theta)=\pm 1$\,
is in turn equivalent to \,$\Delta(\mi+\theta) = \pm 2$\,.

Write \,$M=\left( \begin{smallmatrix} a & b \\ c & d \end{smallmatrix} \right)$\,. Because of \,$ad-bc=1$\,, we have
\begin{equation}
\label{eq:closing:Delta2-4-magic}
\Delta^2-4 = (a+d)^2-4(ad-bc) = (a-d)^2 + 4bc \; .
\end{equation}
If \,$M(\mi+\theta)=\pm\one$\, holds, then the holomorphic functions \,$a-d$\,, \,$b$\, and \,$c$\, are zero at \,$\mi+\theta$\,, and therefore
equation~\eqref{eq:closing:Delta2-4-magic} shows that \,$\Delta^2-4$\, has (at least) a second order zero at \,$\lambda=\mi+\theta$\,. Because of
\,$0=\left. \tfrac{\mathrm{d}\ }{\mathrm{d}\lambda} \right|_{\lambda=\mi+\theta}(\Delta^2-4) = 2\Delta(\mi+\theta)\,\Delta'(\mi+\theta)$\,, this implies \,$\Delta'(\mi+\theta)=0$\,. 
\end{proof}

\begin{lemma}
\label{lem:resrep}
Let \,$\lambda_*\in \C$\,. We define two Mittag-Leffler distributions \,$h_1,h_2 \in H^1_{fin}(\widetilde{\Sigma},\calO)$\,: Their support consists of the one or two points of \,$\widetilde{\Sigma}$\, above \,$\lambda_*$\,,
and at these points they are defined by the meromorphic germs
$$ h_1 \sim -\tfrac{\mu-\mu^{-1}}{2(\lambda-\lambda_*)} \quad\text{respectively}\quad h_2 \sim -\tfrac{\mu-\mu^{-1}}{2(\lambda-\lambda_*)^2} \; . $$
The \,$h_k$\, are anti-symmetric (meaning that \,$\sigma^* h_k = -h_k$\, holds), and for any \,$\delta q \in L^2([0,T],\bbC)$\, we have
\begin{equation}
\label{eq:resrep:res}
\Res(h_1,\Psi(\delta q)) = \tfrac{\partial \Delta(\lambda_*)}{\partial q} \, \delta q \quad\text{and}\quad
\Res(h_2,\Psi(\delta q)) = \tfrac{\partial \Delta'(\lambda_*)}{\partial q} \, \delta q \; ,
\end{equation}
where \,$\Psi: L^2([0,T],\bbC) \to H^0_\rho(\Sigma,\Omega)$\, is defined in Proposition~\ref{P:ML:Psi}.
\end{lemma}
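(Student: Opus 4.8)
The plan is to compute both residues directly from the residue pairing, using the integral formula for $\Psi$ from Proposition~\ref{P:ML:Psi}(ii) and the relation between the eigenvalue $\mu$ and the trace $\Delta$. Recall from Remark~\ref{th:monodromy-remarks}(iv) that $\mu + \mu^{-1} = \Delta$, so that $\mu - \mu^{-1} = \pm\sqrt{\Delta^2-4}$ is (up to sign) the function defining the hyperelliptic structure; in particular it is anti-symmetric under $\sigma$, which immediately gives that $h_1$ and $h_2$ are anti-symmetric Mittag-Leffler distributions with the prescribed principal parts. One should first check that $h_1, h_2 \in H^1_{fin,\rho}(\widetilde\Sigma,\calO)$ so that Proposition~\ref{P:ML:Res} applies; this follows since $\mu - \mu^{-1}$ transforms correctly under $\rho$ (from \eqref{eq:mu-reality} and the definition of $\rho$) and $\lambda_*$ — well, here $\lambda_*$ need not be real, so $\rho$ may move the support; I would instead argue directly via the residue pairing $\Res(h,\omega) = -\sum\Res(h\omega)$ without invoking $\rho$-compatibility, since the statement \eqref{eq:resrep:res} is purely about the pairing.

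The key computation: by Proposition~\ref{P:ML:Psi}(ii), $\Psi(\delta q) = \left(\int_0^T \tr(P^{\tau_t q}\,\delta\alpha(t))\,\mathrm{d}t\right)\mathrm{d}\lambda$, but it is cleaner to use the first form $\Psi(\delta q) = \tfrac1\mu\bigl(\tfrac{\partial\mu}{\partial q}\delta q\bigr)\mathrm{d}\lambda$. Then $h_k\,\Psi(\delta q)$ is a meromorphic $1$-form on $\widetilde\Sigma$ whose only poles away from $\infty_\pm$ lie above $\lambda_*$, and $\Res(h_k,\Psi(\delta q))$ is the sum of residues there. Since $h_k$ has a pole of order $k$ at $\lambda_*$ with principal part $-\tfrac{\mu-\mu^{-1}}{2(\lambda-\lambda_*)^k}$ and $\Psi(\delta q)$ is regular, summing over the two sheets turns $\mu\cdot\tfrac1\mu\tfrac{\partial\mu}{\partial q}\delta q$ plus its $\sigma$-image into $\tfrac{\partial}{\partial q}(\mu+\mu^{-1})\delta q = \tfrac{\partial\Delta}{\partial q}\delta q$ after one uses $\tfrac{\partial}{\partial q}(\mu^{-1}) = -\mu^{-2}\tfrac{\partial\mu}{\partial q}$ and the fact that on the two sheets $\mu$ and $\mu^{-1}$ are interchanged. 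Concretely, for $h_1$ the residue at order-$1$ pole picks out the value of $-\tfrac12(\mu-\mu^{-1})\cdot\tfrac1\mu\tfrac{\partial\mu}{\partial q}\delta q$ summed over the two sheets, which collapses to $\tfrac{\partial\Delta}{\partial q}\delta q$ evaluated at $\lambda_*$; for $h_2$ the order-$2$ pole produces the $\lambda$-derivative of the same expression, giving $\tfrac{\partial\Delta'}{\partial q}\delta q$.

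The main obstacle I anticipate is handling the behavior at a branch point, i.e.\ when $\Delta^2-4$ has an odd-order zero at $\lambda_*$ so that there is only a single point of $\widetilde\Sigma$ above $\lambda_*$ and $\mu-\mu^{-1}$ vanishes there. In that degenerate case the ``sum over the two sheets'' must be interpreted as a single residue on the branched cover, and one must verify that the local coordinate is $\sqrt{\lambda-\lambda_*}$ so that the germ $-\tfrac{\mu-\mu^{-1}}{2(\lambda-\lambda_*)}$ is genuinely meromorphic there (it is, since $\mu-\mu^{-1}$ vanishes to odd order) and that the residue computation still yields $\tfrac{\partial\Delta}{\partial q}\delta q$. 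I would dispatch this by noting that $\tfrac{\partial\mu}{\partial q}\delta q / \mu$ combined with $\tfrac{\mathrm{d}\mu}{\mu}$-type manipulations is $\sigma$-anti-invariant in exactly the way needed so that $\Res_{\widetilde\Sigma} = \tfrac12\sum_{\text{sheets}}\Res_{\bbC}$ holds uniformly, or alternatively by a density/continuity argument in $\lambda_*$ reducing to the generic case where $\lambda_*$ is not a branch point. A secondary routine point is justifying that $\Psi(\delta q)$ is holomorphic at $\lambda_*$ (it is, by Proposition~\ref{P:ML:Psi}(i), since $\Psi$ lands in $H^0_\rho(\Sigma,\Omega)$), so that no extra residue contributions arise.
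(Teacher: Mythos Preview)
Your proposal is correct and follows essentially the same route as the paper. The one simplification you could borrow from the paper: rather than arguing via ``summing over the two sheets'' to collapse $\mu$ and $\mu^{-1}$ contributions, differentiate $\Delta = \mu + \mu^{-1}$ directly to get the pointwise identity
\[
\tfrac{\partial \Delta}{\partial q}\,\delta q \;=\; \tfrac{\mu-\mu^{-1}}{\mu}\,\tfrac{\partial \mu}{\partial q}\,\delta q\,,
\]
which turns $h_k\,\Psi(\delta q)$ into $\bigl(\tfrac{\partial\Delta}{\partial q}\delta q\bigr)\tfrac{\mathrm{d}\lambda}{2(\lambda-\lambda_*)^k}$, a $1$-form pulled back from the $\lambda$-plane, before any sheet-counting; the residue computation then reduces to evaluating $\Res\bigl(\tfrac{\mathrm{d}\lambda}{2(\lambda-\lambda_*)}\bigr)$ on $\widetilde\Sigma$, which is handled exactly as you anticipate (two points with $\lambda-\lambda_*$ as coordinate, or one branch point with $\sqrt{\lambda-\lambda_*}$ as coordinate, noting $\tfrac{\mathrm{d}\lambda}{2(\lambda-\lambda_*)} = \tfrac{\mathrm{d}\sqrt{\lambda-\lambda_*}}{\sqrt{\lambda-\lambda_*}}$). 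Your instinct to drop the $\rho$-compatibility discussion is also right: the lemma only asserts $\sigma$-antisymmetry and the residue identities, and $\lambda_*$ is not assumed real.
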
 
\begin{proof}
We first note that we have \,$\Delta = \mu + \mu^{-1}$\, and therefore
\begin{equation}
\label{eq:resrep:deltadelta}
\tfrac{\partial \Delta}{\partial q}\, \delta q = \tfrac{\mu-\mu^{-1}}{\mu} \left( \tfrac{\partial \mu}{\partial q}\, \delta q \right) \; .
\end{equation}
Hence we have
\begin{equation}
\label{eq:resrep:h1}
  \Res(h_1,\Psi(\delta q))
  = \Res_{\lambda_*} \left( \tfrac{\mu-\mu^{-1}}{2(\lambda-\lambda_*)} \, \tfrac{1}{\mu}\left( \tfrac{\partial \mu}{\partial q}\, \delta q \right) \,\mathrm{d}\lambda \right) 
  = \Res_{\lambda_*} \left( \left( \tfrac{\partial \Delta}{\partial q}\, \delta q \right) \,\tfrac{\mathrm{d}\lambda}{2(\lambda-\lambda_*)} \right) 
\end{equation}
and
\begin{equation}
  \label{eq:resrep:h2}
  \Res(h_2,\Psi(\delta q)) = \Res_{\lambda_*} \left( \left( \frac{\tfrac{\partial \Delta}{\partial q}\, \delta q}{\lambda-\lambda_*} \right) \,\frac{\mathrm{d}\lambda}{2\,(\lambda-\lambda_*)} \right) \; . 
\end{equation}
  
If \,$\Delta^2-4$\, is either non-zero at \,$\lambda_*$\, (then \,$\Sigma$\, has two regular non-branch points above \,$\lambda_*$\,) or has a zero of even order (then \,$\Sigma$\,
has one singular non-branch point above \,$\lambda_*$\,), then the eigenline curve \,$\widetilde{\Sigma}$\, has two points above \,$\lambda_*$\,, \,$\lambda-\lambda_*$\, is a coordinate
near these points, and we have \,$\Res(\tfrac{\mathrm{d}\lambda}{\lambda-\lambda_*})=2$\,. Therefore Equations~\eqref{eq:resrep:res} follow directly from Equations~\eqref{eq:resrep:h1}
and \eqref{eq:resrep:h2}.

If \,$\Delta^2-4$\, has a zero of odd order \,$2k-1$\, at \,$\lambda_*$\,, then \,$\Sigma$\, has a branch point at \,$\lambda_*$\,, which is regular for \,$k=1$\,, singular for
\,$k\geq 2$\,. In either case, the eigenline curve \,$\widetilde{\Sigma}$\, of \,$\Sigma$\, has only one point above \,$\lambda_*$\,, and \,$\sqrt{\lambda-\lambda_*}$\, is a coordinate
near this point. We note that \,$\tfrac{\mathrm{d}\lambda}{2(\lambda-\lambda_*)} = \tfrac{\mathrm{d}\sqrt{\lambda-\lambda_*}}{\sqrt{\lambda-\lambda_*}}$\, holds,
therefore Equations~\eqref{eq:resrep:res} again follow from Equations~\eqref{eq:resrep:h1} and \eqref{eq:resrep:h2}.
\end{proof}


\begin{lemma}
\label{lem:closing-linindep}
Suppose that the arithmetic genus of the eigenline curve \,$\widetilde{\Sigma}$\, is at least \,$4$\, (and possibly infinite), and let \,$\theta\in\R$\, be given. 
Then the four \,$\R$-linear forms \,$\eta_1,\dotsc,\eta_4: L^2([0,T],\bbC) \to \C$\, given by
$$
\eta_1(\delta q) = \frac{\partial \Delta(\mi+\theta)}{\partial q}\, \delta q \;,\quad \eta_2(\delta q) = \frac{\partial \Delta'(\mi+\theta)}{\partial q}\, \delta q \;,\quad
\eta_3(\delta q) = \overline{\eta_1(\delta q)} \quad\text{and}\quad \eta_4(\delta q) = \overline{\eta_2(\delta q)} $$
are linearly independent, i.e. there exist four variations \,$\delta_1 q,\dotsc,\delta_4 q \in L^2([0,T],\bbC)$\, of \,$q$\, so that the matrix \,$(\eta_j(\delta_k q))_{j,k=1,\dotsc,4}$\, has
maximal rank.
Moreover, the variations \,$\delta_k q$\, can be chosen so that only finitely many of their Fourier coefficients are non-zero. 
\end{lemma}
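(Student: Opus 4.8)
The plan is to argue by contradiction via the duality between Mittag-Leffler distributions and the symplectic pairing $\Omega$ established in Section~3. Suppose the four forms $\eta_1,\dots,\eta_4$ were linearly dependent over $\R$; then some nontrivial real linear combination vanishes identically on $L^2([0,T],\bbC)$. Using Lemma~\ref{lem:resrep}, the forms $\eta_1,\eta_2$ are represented as $\Res(h_1,\Psi(\cdot))$ and $\Res(h_2,\Psi(\cdot))$ with $\lambda_*=\mi+\theta$, while $\eta_3,\eta_4$ are the complex conjugates, hence (using $\rho^*\overline{\Psi(\delta q)}=\Psi(\delta q)$ from Proposition~\ref{P:ML:Psi}(i)) represented by $\Res(\rho_*\overline{h_1},\Psi(\cdot))$ and $\Res(\rho_*\overline{h_2},\Psi(\cdot))$, whose support lies above $\overline{\mi+\theta}=-\mi+\theta$. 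So a real linear dependence gives a finite Mittag-Leffler distribution $h = c_1 h_1 + c_2 h_2 + c_3\,\rho_*\overline{h_1} + c_4\,\rho_*\overline{h_2}$ with $\Res(h,\Psi(\delta q))=0$ for all $\delta q$, and not all $c_k$ zero.

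By Proposition~\ref{P:ML:solvable}, such an $h$ is then solvable by a meromorphic function $f$ on $\widetilde\Sigma$ vanishing to order $O(\lambda^{-1})$ near $\infty_\pm$. The next step is to extract a contradiction from the \emph{existence} of such an $f$ with its prescribed principal parts. The principal parts are anti-symmetric under $\sigma$ at $\mi+\theta$ (and at $-\mi+\theta$), with poles of order at most $2$; since $f$ is also $O(\lambda^{-1})$ at both points over $\infty$, the function $f$ (or its $\sigma$-anti-invariant part $\tfrac12(f-\sigma^*f)$, which solves the same distribution) would be a global meromorphic function on the compact(ified) hyperelliptic curve $\widetilde\Sigma$ with a very small pole divisor — essentially supported at the two points over $\mi+\theta$ and the two over $-\mi+\theta$, with total degree at most $4$ (counting the $O(\lambda^{-1})$ zeros at $\infty_\pm$ as forcing the numerator degree down). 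A Riemann--Roch / degree count on a hyperelliptic curve of arithmetic genus $\geq 4$ then shows that the space of such functions is too small to realize all four independent principal-part configurations: concretely, the $\sigma$-anti-invariant meromorphic functions with poles bounded by the given divisor and the decay at $\infty_\pm$ form a space of dimension strictly less than $4$, so the four principal-part functionals cannot all be simultaneously prescribed — contradicting that $h$ (hence each $h_k$-component) is solvable. This forces all $c_k=0$. I expect this degree/dimension count to be the main obstacle: one must carefully track how the $O(\lambda^{-1})$ condition at $\infty_\pm$, the $\sigma$-anti-invariance, and the genus hypothesis combine, and treat the branch-point case (when $\mi+\theta$ or $-\mi+\theta$ is a branch point of $\widetilde\Sigma$) separately as in the proof of Lemma~\ref{lem:resrep}; the hypothesis ``genus $\geq 4$'' is presumably exactly what is needed here.

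For the final sentence, once linear independence is known, the variations $\delta_1 q,\dots,\delta_4 q$ realizing maximal rank of $(\eta_j(\delta_k q))$ can be taken to have only finitely many nonzero Fourier coefficients: the forms $\eta_j$ are continuous on $L^2$, so the trigonometric polynomials are dense, and the condition that a $4\times 4$ matrix have maximal rank is open; hence starting from any $\delta_k q$ achieving full rank one may replace each by a sufficiently close trigonometric polynomial without destroying the rank. This is routine and I would dispatch it in one or two sentences.
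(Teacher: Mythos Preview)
Your overall architecture matches the paper exactly: represent the $\eta_k$ as residue pairings via Lemma~\ref{lem:resrep}, assume a nontrivial dependence, obtain a finite anti-symmetric Mittag--Leffler distribution $h$ with $\Res(h,\Psi(\delta q))=0$ for all $\delta q$, solve it via Proposition~\ref{P:ML:solvable} by an anti-symmetric $f$ with $O(\lambda^{-1})$ decay, and then extract a contradiction from the existence of $f$. Your density argument for the last sentence is also what the paper does.

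The gap is in the contradiction step, and it is a genuine one. First, your degree count is off: $h_2$ has poles of order $2$ above $\lambda_*$, so the pole divisor of $f$ has degree at most $8$ (two points over $\mi+\theta$ and two over $-\mi+\theta$, each of order $\leq 2$), not $4$. Second, and more importantly, your logic is muddled: you write that the space of admissible $f$ has dimension $<4$ and therefore ``the four principal-part functionals cannot all be simultaneously prescribed'', but that is not what you need. You only have \emph{one} nontrivial $h$, and you must derive a contradiction from the existence of a \emph{single} nonzero anti-symmetric $f$ with the given pole bound and decay. A direct Riemann--Roch count on $\widetilde{\Sigma}$ is also problematic because $\widetilde{\Sigma}$ may have infinite arithmetic genus (hence is not compact), so Riemann--Roch as stated does not apply.

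The paper's device here is the missing idea: since $f$ is $\sigma$-anti-symmetric, $f^2$ descends to a rational function of $\lambda$ alone, and one forms the auxiliary hyperelliptic curve $Y=\{(\lambda,y):y^2=f^2(\lambda)\}$. Because $f$ has degree $d\leq 8$ on $\widetilde{\Sigma}$, the function $f^2$ has degree $\leq 8$ in $\lambda$, so $Y$ has arithmetic genus at most $3$. The map $(\lambda,\mu)\mapsto(\lambda,f(\lambda,\mu))$ is then a one-sheeted covering $\widetilde{\Sigma}\to Y$ (both are two-sheeted over $\bbC$ and $f$ separates the sheets generically since it is anti-symmetric and nonzero), forcing the arithmetic genus of $\widetilde{\Sigma}$ to be $\leq 3$ --- the desired contradiction. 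This auxiliary-curve trick is what replaces your hoped-for Riemann--Roch estimate and works uniformly even when $\widetilde{\Sigma}$ has infinite genus.
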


\begin{proof}
If variations \,$\delta_k q$\, as in the lemma exist at all, then they can be chosen with only finitely many non-zero Fourier coefficients, because the set of \,$L^2$-functions with
finitely many non-zero Fourier coefficients is dense in \,$L^2([0,T],\bbC)$\,, and the condition that the matrix \,$(\eta_j(\delta_k q))_{j,k=1,\dotsc,4}$\, has maximal rank is open.

Because of Remark~\ref{th:monodromy-remarks}(ii), \,$\overline{\Delta(\mi+\theta)} = \Delta(-\mi+\theta)$\, holds, and therefore we have for any \,$\delta q \in L^2([0,T],\bbC)$\,
$$ \eta_3(\delta q) = \frac{\partial \Delta(-\mi+\theta)}{\partial q}\, \delta q \;,\quad \eta_4(\delta q) = \frac{\partial \Delta'(-\mi+\theta)}{\partial q}\, \delta q \;. $$
Thus Lemma~\ref{lem:resrep} shows that there exist anti-symmetric Mittag-Leffler distributions \,$h_1,\dotsc,h_4 \in H^1_{fin}(\widetilde{\Sigma},\calO)$\, (with support at \,$\pm\mi+\theta$\,)
so that we have for any \,$\delta q \in L^2([0,T],\bbC)$\,
\begin{equation}
\label{eq:closing-linindep:etak-Res}
\eta_k(\delta q) = \Res(h_k, \Psi(\delta q)) \; .
\end{equation}


We now assume that the four linear forms \,$\eta_1,\dotsc,\eta_4$\, 
are linearly dependent, i.e.~that there exist constants \,$s_1,\dotsc,s_4 \in \C$\,, not all of them zero, so that
$$ s_1\,\eta_1 +  s_2\,\eta_2 +  s_3\,\eta_3 +  s_4\,\eta_4 =0 $$
holds. Equation~\eqref{eq:closing-linindep:etak-Res} then implies that we have
$$ \Res(h, \Psi(\delta q))=0 $$
for the anti-symmetric Mittag-Leffler distribution \,$h := s_1\,h_1 + s_2\,h_2 + s_3\,h_3 + s_4\,h_4 \in H^1_{fin}(\widetilde{\Sigma},\calO)$\, with support contained in \,$\{\mi+\theta,-\mi+\theta\}$\,
and every \,$\delta q \in L^2([0,T],\bbC)$\,. Proposition~\ref{P:ML:solvable} shows that \,$h$\, is solvable by means of a meromorphic function \,$f$\, on \,$\widetilde{\Sigma}$\, which
tends to zero near \,$\infty_\pm$\,. Because \,$h$\, is anti-symmetric, the function \,$f + f\circ \sigma$\, is holomorphic in \,$\lambda\in \C$\, and tends to zero near \,$\lambda=\infty$\,,
hence vanishes. Thus the solving function \,$f$\, is anti-symmetric, i.e.~\,$f \circ \sigma = -f$\, holds. 


We consider the hyperelliptic complex curve defined by \,$f$\,:
$$ Y = \bigr\{(\lambda,y)\in \C \times \C \mid y^2 = f^2(\lambda) \bigr\} \; . $$
Here \,$f^2$\, is a meromorphic function in \,$\lambda\in\C$\, because \,$f$\, is anti-symmetric with respect to \,$\sigma$\,. The Mittag-Leffler distribution \,$h$\,, and therefore also the
meromorphic function \,$f$\, has degree \,$d \leq 8$\,. Then \,$f^2$\, has degree \,$2d$\, as a function on \,$\widetilde{\Sigma}$\,, or degree \,$d$\, as a function in \,$\lambda\in\C$\,.
The arithmetic genus \,$g$\, of \,$Y$\, is given by \,$2g+2=d$\, \cite[III.7.4]{farkaskra}, and therefore the arithmetic genus of \,$Y$\, is at most \,$3$\,.

Because both \,$\widetilde{\Sigma}$\, and \,$Y$\, are hyperelliptic above \,$\C$\,, the meromorphic map
$$ \widetilde{\Sigma}\to Y, \; (\lambda,\mu) \mapsto (\lambda,f(\lambda,\mu)) $$
is a one-fold covering map. Therefore the arithmetic genus of \,$\widetilde{\Sigma}$\, is \,$\leq g \leq 3$\,, which is a contradiction to the hypothesis of the lemma.
\end{proof}

\begin{remark}
The statement in the treatment of \,$\R^3$\, and \,$\bbS^3$\, in \cite{kleinkilian1} that is analogous to Lemma~\ref{lem:closing-linindep} here is \cite[Lemma~6.4]{kleinkilian1}.
The same methods that are used here could also have been used for the proof of \cite[Lemma~6.4]{kleinkilian1}.
\end{remark}

In \cite[Section~3]{kleinkilian1} we introduced perturbed Fourier coefficients for the potential \,$q\in L^2([0,T],\bbC)$\,: In \cite[Lemma~3.1]{kleinkilian1} it was shown that the
points \,$\lambda\in \bbC$\, where the two diagonal entries of the monodromy \,$M$\, are equal can be enumerated by a sequence \,$(\lambda_k)_{k\in \bbZ}$\, in such a way that
\,$\lambda_k-\lambda_{k,0} \in \ell^2$\, holds, where \,$\lambda_{k,0} := \tfrac{2\pi}{T}\,k$\,. Then \,$z_k := 2(-1)^k\,b(\lambda_k) \in \ell^2$\, holds by \cite[Proposition~3.3]{kleinkilian1},
where \,$b$\, denotes the upper-right entry of \,$M$\,. We call the sequence \,$(z_k)_{k\in\bbZ}$\, the  \emph{perturbed Fourier coefficients} of \,$q$\,. The potential \,$q$\, is of finite gap if and only if all but finitely many of the perturbed Fourier coefficients are zero (\cite[Proposition~3.4]{kleinkilian1}). 

For \,$q\in L^2([0,T],\bbC)$\, and \,$N\in \bbN$\, we define the Banach space
$$ L^2([0,T],\bbC)_{q,N} := \{\;q_1\in L^2([0,T],\bbC)\;|\; \widehat{q_1}(k)=\widehat{q}(k) \text{ for all \,$|k|\leq N$\,}\;\} \; . $$
Here we denote for any \,$f \in L^2([0,T],\bbC)$\, and \,$k\in \bbZ$\, the (usual) \,$k$-th Fourier coefficient by
$$ \widehat{f}(k) := \int_0^T f(t)\,\exp(\tfrac{2\pi \mi}{T}\,k\,t) \,\mathrm{d}t \; . $$
In \cite[Proposition~5.2]{kleinkilian1} it was shown that for sufficiently large \,$N \in \bbN$\,, the map
$$ \Phi_N : L^2([0,T],\bbC)_{q,N} \to \ell^2(|k|>N),\; q_1 \mapsto (z_k(q_1))_{|k|>N} $$
is a local diffeomorphism near \,$q$\,. The following theorem shows how to integrate this result with the first ``half'' of the closing condition for \,$\bbH^3$\, in Lemma~\ref{lem:closing},
i.e.~with the additional conditions that \,$\Delta(\mi+\theta)=\pm 2$\, and \,$\Delta'(\mi+\theta)=0$\, holds. It is analogous to \cite[Theorem~6.5]{kleinkilian1} in the treatment
of \,$\bbR^3$\, and \,$\bbS^3$\,. 

\begin{theorem}
\label{thm:Psi-diffeo}
Suppose that \,$q$\, satisfies the closing condition for \,$\bbH^3$\, of Lemma~\ref{lem:closing} for some \,$\theta\in\R$\,. Also suppose that the arithmetic genus of \,$\widetilde{\Sigma}$\, is at least \,$4$\,.
Then there exist \,$N\in\bbN$\, and \,$f_1,\dotsc,f_4 \in L^2([0,T],\bbC)$\, so that the map
\begin{align*}
  \Psi_{N,f}: L^2([0,T],\bbC)_{q,N} + \bbR\,f_1 + \bbR\,f_2+ \bbR\,f_3+ \bbR\,f_4 & \to \ell^2(|k|> N) \times \bbC \times \bbC, \\
  q_1 & \mapsto \bigr(\,(z_k(q_1))_{|k|> N}\,,\,\Delta(\mi+\theta)\,,\,\Delta'(\mi+\theta)\,\bigr)
\end{align*}
is a local diffeomorphism near \,$q$\,.
\end{theorem}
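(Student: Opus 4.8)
The plan is to combine the known local diffeomorphism result for $\Phi_N$ from \cite[Proposition~5.2]{kleinkilian1} with the transversality statement of Lemma~\ref{lem:closing-linindep}, using a standard finite-dimensional extension argument. First I would recall that by \cite[Proposition~5.2]{kleinkilian1}, for all sufficiently large $N\in\bbN$ the map $\Phi_N: L^2([0,T],\bbC)_{q,N} \to \ell^2(|k|>N)$, $q_1\mapsto (z_k(q_1))_{|k|>N}$, is a local diffeomorphism near $q$; in particular its differential $d\Phi_N|_q$ is a linear isomorphism. I would then consider the two additional scalar-valued (viewed as $\bbR$-valued pairs, or as one $\bbC^2$-valued) functionals $q_1\mapsto \Delta(\mi+\theta)$ and $q_1\mapsto \Delta'(\mi+\theta)$ on $L^2([0,T],\bbC)$, whose differentials at $q$ are, by definition, $\delta q \mapsto \tfrac{\partial \Delta(\mi+\theta)}{\partial q}\,\delta q$ and $\delta q \mapsto \tfrac{\partial \Delta'(\mi+\theta)}{\partial q}\,\delta q$. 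Since these are $\bbC$-valued but we work over $\bbR$, taking real and imaginary parts we get exactly the four $\bbR$-linear forms $\eta_1,\dotsc,\eta_4$ of Lemma~\ref{lem:closing-linindep} (using $\eta_3=\overline{\eta_1}$, $\eta_4=\overline{\eta_2}$), which that lemma tells us are $\bbR$-linearly independent, and moreover are realized by variations $\delta_1 q,\dotsc,\delta_4 q$ with only finitely many non-zero Fourier coefficients.

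The key construction is then as follows. Because the $\delta_k q$ have only finitely many non-zero Fourier coefficients, I can enlarge $N$ so that additionally: (a) the Fourier modes of all of $\delta_1 q,\dotsc,\delta_4 q$ are supported in $\{|k|\le N\}$, and (b) $N$ is large enough that $\Phi_N$ is still a local diffeomorphism near $q$ (the conclusion of \cite[Proposition~5.2]{kleinkilian1} holds for all large $N$, and enlarging $N$ only makes the domain smaller). Now set $f_j := \delta_j q$ for $j=1,\dotsc,4$. The domain of $\Psi_{N,f}$ is $V := L^2([0,T],\bbC)_{q,N} + \bbR f_1 + \dotsb + \bbR f_4$, a closed affine-linear Banach subspace of $L^2([0,T],\bbC)$ whose tangent space at $q$ is $L^2([0,T],\bbC)_{q,N}^0 \oplus \bbR f_1 \oplus \dotsb \oplus \bbR f_4$, where $L^2([0,T],\bbC)_{q,N}^0$ denotes the subspace of $L^2$-functions with vanishing Fourier coefficients for $|k|\le N$ (the tangent space of the affine subspace $L^2([0,T],\bbC)_{q,N}$). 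Because the $f_j$ are supported in $\{|k|\le N\}$, this sum is in fact direct, and the tangent space of $V$ at $q$ is the direct sum of a codimension-$(2(2N+1))$ subspace with a genuine $4$-dimensional complement.

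To finish, I would show $d\Psi_{N,f}|_q: T_q V \to \ell^2(|k|>N)\times \bbC\times \bbC$ is a linear isomorphism, and then invoke the inverse function theorem for Banach spaces (both sides being Banach, with $\bbC\times\bbC\cong \bbR^4$). For injectivity and surjectivity simultaneously: write $d\Psi_{N,f}|_q = (d\Phi_N|_q \oplus 0)\,\oplus\,(\text{the }\Delta,\Delta'\text{ part})$ with respect to a suitable splitting. On the subspace $L^2([0,T],\bbC)_{q,N}^0$ the first component restricts to $d\Phi_N|_q$, which is an isomorphism onto $\ell^2(|k|>N)$ by \cite[Proposition~5.2]{kleinkilian1}; moreover, by a diagram-chasing / block-triangular argument, given any target $(w, c_1, c_2)$ one first uses the $L^2([0,T],\bbC)_{q,N}^0$-direction to match $w$, and then corrects the resulting error in the $(\Delta,\Delta')$-coordinates by adding a suitable combination of the $f_j$ — this is possible precisely because the $4\times 4$ matrix $(\eta_j(f_k))_{j,k}$ has maximal rank by Lemma~\ref{lem:closing-linindep}. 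Injectivity follows the same way: a kernel element decomposes, the $L^2([0,T],\bbC)_{q,N}^0$-part must vanish since $d\Phi_N|_q$ is injective, and then the remaining combination of $f_j$'s must vanish since the $\eta_j$ are linearly independent on $\mathrm{span}_{\bbR}(f_1,\dots,f_4)$. Hence $d\Psi_{N,f}|_q$ is bijective, and being a bounded operator between Banach spaces it is a topological isomorphism, so the inverse function theorem gives that $\Psi_{N,f}$ is a local diffeomorphism near $q$.

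I expect the main (and really only) subtlety to be bookkeeping: making sure the splitting $T_q V = L^2([0,T],\bbC)_{q,N}^0 \oplus \bigoplus_j \bbR f_j$ is genuinely direct (which is why the $f_j$ must be supported in $\{|k|\le N\}$), and checking that the compound map is block-triangular in the right way so that the isomorphism of $d\Phi_N|_q$ on the infinite-dimensional part and the rank-$4$ condition on the finite-dimensional part together yield bijectivity. There is a minor regularity point — one should confirm that $q_1\mapsto \Delta(\mi+\theta)$ and $q_1\mapsto \Delta'(\mi+\theta)$ are $C^1$ (indeed real-analytic) on $L^2$, which follows from analyticity of $\lambda\mapsto M(\lambda)$ and smooth dependence of the extended frame on the potential as in \cite{kleinkilian1}. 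None of this involves a genuine difficulty beyond what has already been assembled in the preceding lemmas; the theorem is essentially the conjunction of \cite[Proposition~5.2]{kleinkilian1} and Lemma~\ref{lem:closing-linindep} via a finite-dimensional transversal extension.
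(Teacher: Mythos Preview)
Your approach matches the paper's exactly: set $f_j := \delta_j q$ from Lemma~\ref{lem:closing-linindep}, enlarge $N$ so that \cite[Proposition~5.2]{kleinkilian1} applies and all $\widehat{f_j}(k)$ vanish for $|k|>N$, then conclude that $\Psi_{N,f}'(q)$ is an isomorphism. The paper is in fact terser than you on the last step and simply asserts the isomorphism.

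One caution on your block-triangular argument: varying $q$ in the $f_j$-directions can still move the perturbed Fourier coefficients $(z_k)_{|k|>N}$ (these are \emph{not} the ordinary Fourier coefficients, so $\widehat{f_j}(k)=0$ for $|k|>N$ does not force $\tfrac{\partial z_k}{\partial q}\,f_j=0$). Hence the derivative is not literally block-triangular, and your surjectivity/injectivity sketch as written ignores this off-diagonal block $B$. The fix is routine --- e.g.\ a Schur-complement argument, noting that $\|B\|\to 0$ as $N\to\infty$ while $\|A^{-1}\|$ and $\|C\|$ stay bounded, so $D-CA^{-1}B$ is invertible for $N$ large --- so this is a bookkeeping imprecision rather than a gap in strategy.
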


\begin{proof}
We need to show that the derivative of \,$\Psi_{N,f}$\, at \,$q$\, is an isomorphism of real Banach spaces.
By Lemma~\ref{lem:closing-linindep} there exist four variations \,$\delta_1 q, \dotsc, \delta_4 q \in L^2([0,T],\bbC)$\, of \,$q$\, with only finitely many non-zero
Fourier coefficients, so that the matrix \,$(\eta_j(\delta_k q))_{j,k=1,\dotsc,4}$\, has maximal rank, where the linear forms \,$\eta_j$\, are as in Lemma~\ref{lem:closing-linindep}. 
Let \,$f_j := \delta_j q \in L^2([0,T],\bbC)$\, for \,$j\in \{1,\dotsc, 4\}$\,.
By \cite[Proposition~5.2]{kleinkilian1} there exists \,$N \in \bbN$\, so that
$$ L^2([0,T],\bbC)_{0,N} \to \ell^2(|k|>N),\; \delta q \mapsto (\delta z_k)_{|k|>N} $$
is an isomorphism of Banach spaces; we can choose \,$N$\, large enough so that additionally \,$\widehat{f_j}(k) = 0$\, for all \,$k\in\bbZ$\, with \,$|k|>N$\, and all \,$j\in\{1,\dotsc,4\}$\,.
Then \,$\Psi_{N,f}'(q)$\, is an isomorphism of Banach spaces. 
\end{proof}

%

\section{Simple factor dressing}

It follows from Theorem~\ref{thm:Psi-diffeo} that any potential \,$q$\, which satisfies the closing condition for \,$\bbH^3$\, is the limit of a sequence \,$q_n$\, of finite gap potentials
for which we have \,$\Delta(\mi+\theta)=\pm 2$\, and \,$\Delta'(\mi+\theta)=0$\,. However, we would like to show that the \,$q_n$\, can be chosen to fulfill the ``full'' closing condition
of Lemma~\ref{lem:closing}. The remaining condition for this is that the monodromy of the \,$q_n$\, at the Sym point \,$\theta+\mi$\, is semisimple; note that this condition does not hold
automatically because the Sym point is not on the real line. We will accomplish this by applying \emph{simple factor dressing} to \,$q$\, and to \,$q_n$\,. We prepare the introduction
of simple factor dressing by first describing loop groups over \,$\mathrm{SL}_2(\bbC)$\,, the Birkhoff factorisation for such loop groups and the dressing transformation \cite{BurP:dre, calini-ivey1998} in general. 

For \,$r>0$\, we define 
$$ C_r = \{ \lambda \in \bbC \mid \| \lambda \| = r \}\,, \quad D_r = \{ \lambda \in \bbC \mid \| \lambda \| < r \}\, \quad\text{and}\quad E_r = \{ \lambda \in \bbC \mid \| \lambda \| > r \} \cup \{ \infty \} \,. $$
Next we define the following $r$-loop groups of $\mathrm{SL}_2 (\mathbb{C})$ by 
\begin{align*}
        \Lambda_r  \mathrm{SL}_2 (\mathbb{C}) &= \{ g: C_r \to \mathrm{SL}_2 (\mathbb{C}) \mid \text{\,$g$\, is analytic} \}\,, \\
        \Lambda_r^+  \mathrm{SL}_2 (\mathbb{C}) &= \{ \,g \in \Lambda_r \mathrm{SL}_2 (\mathbb{C}) \mid g \mbox{ extends analytically to } D_r  \}\,, \\
        \Lambda_r^-  \mathrm{SL}_2 (\mathbb{C}) &= \{ \,g \in \Lambda_r \mathrm{SL}_2 (\mathbb{C}) \mid g \mbox{ extends analytically to } E_r  \}\,, \\
        \Lambda_{r,\mathbbm{1}}^-  \mathrm{SL}_2 (\mathbb{C}) &= \{ \,g \in \Lambda_r^- \mathrm{SL}_2 (\mathbb{C}) \mid g(\infty) = \mathbbm{1} \}\,.
\end{align*}

The \emph{Birkhoff factorisation theorem} \cite{PreS, McI} states in this situation that for every $g \in \Lambda_r  \mathrm{SL}_2 (\mathbb{C})$ there exists a unique factorization
\begin{equation*} 
        g = g_+\, D\,g_-
\end{equation*} 
with $g_- \in \Lambda_{r,\mathbbm{1}}^- \mathrm{SL}_2 (\mathbb{C}),\,g_+ \in \Lambda_r^+ \mathrm{SL}_2 (\mathbb{C})$, and $D = \left( \begin{smallmatrix} \lambda^n & 0 \\ 0 & \lambda^{-n} \end{smallmatrix} \right)$\, for some \,$n \in \mathbb{Z}$\,. When $n = 0$ and thus $D = \mathbbm{1}$ holds, then $g$ is said to lie in the \emph{big cell} of $\Lambda_r \mathrm{SL}_2 (\mathbb{C})$.

We wish to consider dressing operations that maintain the reality condition of Equation~\eqref{eq:reality} for extended frames. To ensure this, the dressing factor \,$g \in \Lambda_r^- \mathrm{SL}_2(\bbC)$\,
itself needs to satisfy a corresponding reality condition, namely
$$ g^* = g \quad\text{with}\quad g^*(\lambda) = \overline{g(\bar{\lambda})^t}^{\,-1} \; . $$
We will see that this condition also ensures that \,$g\,F_\lambda$\, lies in the big cell of $\Lambda_r \mathrm{SL}_2 (\mathbb{C})$. We need notations for the sub-loop-groups of elements satisfying
this reality condition, and thus define
\begin{gather*}
  \Lambda_{r,*}  \mathrm{SL}_2 (\mathbb{C}) = \{ g \in \Lambda_{r}  \mathrm{SL}_2 (\mathbb{C})  \mid g^* = g \}\,, \quad
  \Lambda_{r,*}^\pm \mathrm{SL}_2(\bbC) = \Lambda_{r}^\pm \mathrm{SL}_2(\bbC) \cap \Lambda_{r,*} \mathrm{SL}_2(\bbC) \\
  \text{and}\quad \Lambda_{r,*,\one}^- \mathrm{SL}_2(\bbC) = \Lambda_{r,\one}^- \mathrm{SL}_2(\bbC) \cap \Lambda_{r,*} \mathrm{SL}_2(\bbC) \; .
\end{gather*}

\begin{lemma}
  \label{L:bigcell}
  \,$\Lambda_{r,*} \mathrm{SL}_2 (\mathbb{C})$\, is contained in the big cell of \,$\Lambda_{r} \mathrm{SL}_2 (\mathbb{C})$\,. A given \,$g\in \Lambda_{r,*} \mathrm{SL}_2 (\mathbb{C})$\, therefore has a
  unique decomposition
  $$ g = g_+\,g_- \quad\text{with}\quad g_+ \in \Lambda_{r,*}^+ \mathrm{SL}_2 (\mathbb{C}),\; g_- \in \Lambda_{r,*,\mathbbm{1}}^- \mathrm{SL}_2 (\mathbb{C}) \; . $$
\end{lemma}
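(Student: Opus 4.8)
The plan is to show that every $g \in \Lambda_{r,*}\mathrm{SL}_2(\bbC)$ lies in the big cell, since the uniqueness of the factorization and the fact that the factors inherit the reality condition will then follow easily from the general Birkhoff factorisation theorem. By that theorem, we may write $g = g_+\,D\,g_-$ with $g_- \in \Lambda_{r,\one}^-\mathrm{SL}_2(\bbC)$, $g_+ \in \Lambda_r^+\mathrm{SL}_2(\bbC)$ and $D = \left(\begin{smallmatrix} \lambda^n & 0 \\ 0 & \lambda^{-n}\end{smallmatrix}\right)$; the goal is to prove $n=0$. The strategy is to exploit the symmetry $g^* = g$ together with an evaluation of the factorisation on the circle $C_r$ at a suitable pairing.

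First I would spell out what $g^*=g$ means for the three factors. Applying the involution $h \mapsto h^*$ to $g = g_+ D g_-$ gives $g = g^* = g_-^*\,D^*\,g_+^*$, and since the involution sends $\Lambda_r^\pm$ to $\Lambda_r^\mp$ (because $\lambda \mapsto \bar\lambda$ maps $C_r$ to itself but swaps $D_r \leftrightarrow E_r$ — here one uses that $r$ is such that $C_r$ is invariant under conjugation, which is automatic since $C_r$ is a circle centred at the origin), we get a second Birkhoff factorisation of $g$: namely $g = (g_-^* \cdot c)\,(c^{-1} D^* c')\,(c'^{-1}\, g_+^*)$ after normalising $g_-^*$ at $\infty$. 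Noting $D^* = \overline{D(\bar\lambda)^t}^{-1} = D^{-1}$, the middle factor is $\left(\begin{smallmatrix}\lambda^{-n} & 0\\ 0 & \lambda^n\end{smallmatrix}\right)$. By the \emph{uniqueness} part of the Birkhoff theorem, comparing the two factorisations forces the middle diagonal factors to agree up to the permutation implicit in the normalisation, hence $n = -n$, i.e. $n=0$. I expect the main subtlety to be handling the normalisation at $\infty$ carefully: $g_-^*$ need not be $\one$ at $\infty$, so one must absorb $g_-^*(\infty)$ into $g_+^*$ and track that this does not change the diagonal factor — this is where one has to be a little careful that the middle term of a Birkhoff factorisation is genuinely unique (the freedom is only in constant diagonal conjugation, which does not alter the exponent $n$).

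Once $n=0$ is established, $g$ lies in the big cell and $g = g_+\,g_-$ with $g_+ \in \Lambda_r^+\mathrm{SL}_2(\bbC)$, $g_- \in \Lambda_{r,\one}^-\mathrm{SL}_2(\bbC)$, uniquely. To see the factors satisfy the reality condition, apply $h \mapsto h^*$ again: $g = g^* = g_-^*\,g_+^*$, and $g_-^* \in \Lambda_r^+\mathrm{SL}_2(\bbC)$ while $g_+^* \in \Lambda_{r}^-\mathrm{SL}_2(\bbC)$; normalising, $g_+^*(\infty)^{-1} g_+^* \in \Lambda_{r,\one}^-\mathrm{SL}_2(\bbC)$ and $g_-^*\,g_+^*(\infty) \in \Lambda_r^+\mathrm{SL}_2(\bbC)$, giving a second big-cell factorisation of $g$. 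Uniqueness then yields $g_+ = g_-^*\,g_+^*(\infty)$ and $g_- = g_+^*(\infty)^{-1} g_+^*$; combining these two identities shows $g_+^*(\infty)$ is forced to be $\one$ (evaluate the first at $\infty$: $g_+(\infty) = g_-^*(\infty)\,g_+^*(\infty) = g_+^*(\infty)$ since $g_-^*$ is the $*$-image of something vanishing-normalised, so $g_-^*(\infty)=\one$; but also $g_+(\infty)$ need not be $\one$ — rather one checks directly that $g_+^* = g_-$ and $g_-^* = g_+$ follow, so $g_+ \in \Lambda_{r,*}^+$ and $g_- \in \Lambda_{r,*,\one}^-$ once one confirms $g_-(\infty)=\one \Rightarrow g_-^*$ is $+$-type with the right normalisation). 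The cleanest way to present this last part is: the factorisation $g = g_-^* g_+^*$ (suitably normalised) and $g = g_+ g_-$ are both big-cell factorisations, hence equal factor-by-factor, giving $g_+^* = g_-$ and $g_-^* = g_+$ directly, which is exactly the claim.
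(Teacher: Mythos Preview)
Your argument rests on the claim that the involution \,$h \mapsto h^* = \overline{h(\bar\lambda)^t}^{\,-1}$\, swaps \,$\Lambda_r^+$\, and \,$\Lambda_r^-$\,, which you justify by saying that \,$\lambda \mapsto \bar\lambda$\, ``swaps \,$D_r \leftrightarrow E_r$\,''. This is false: \,$|\bar\lambda| = |\lambda|$\,, so complex conjugation preserves both the disc \,$D_r$\, and its exterior \,$E_r$\,. Hence \,$*$\, \emph{preserves} each of \,$\Lambda_r^\pm$\, rather than interchanging them. This error propagates through the whole proposal: in particular your ``second factorisation'' \,$g = g_-^*\,D^*\,g_+^*$\, is not of Birkhoff type in the order you need, and the subsequent normalisation discussion is built on the wrong premise.

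With the correct observation that \,$*$\, preserves the sign, applying \,$*$\, to \,$g = g_+ D g_-$\, yields \,$g = g_+^*\,D^{-1}\,g_-^*$\, with \,$g_+^* \in \Lambda_r^+$\, and \,$g_-^* \in \Lambda_{r,\one}^-$\, (note \,$g_-(\infty)=\one$\, gives \,$g_-^*(\infty)=\one$\,). One is then tempted to invoke uniqueness of Birkhoff directly to get \,$D = D^{-1}$\,; but for elements outside the big cell the middle factor is only determined up to permutation of the diagonal exponents, which for \,$\mathrm{SL}_2$\, is exactly the ambiguity \,$n \leftrightarrow -n$\, and so gives nothing. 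The paper therefore does not argue this way: it rearranges to \,$G_+ D^{-1} = D G_-$\, with \,$G_+ := g_+^{-1}g_+^* \in \Lambda_r^+$\, and \,$G_- := g_- g_-^{*\,-1} \in \Lambda_{r,\one}^-$\,, observes that the diagonal entries of \,$G_-$\, are real and positive on \,$\bbR \cap E_r$\, (hence nonvanishing), and then compares the \,$(1,1)$\,- respectively \,$(2,2)$\,-entries to obtain a Liouville-type contradiction when \,$n<0$\, respectively \,$n>0$\,. Once \,$n=0$\, is known, the reality of the factors follows from uniqueness of the big-cell factorisation exactly as you sketch (but again with the corrected fact that \,$g_\pm^*$\, stays in \,$\Lambda_r^\pm$\,): the two big-cell factorisations \,$g = g_+ g_-$\, and \,$g = g_+^* g_-^*$\, must coincide factor by factor.
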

\begin{proof}
  Let \,$g\in \Lambda_{r,*} \mathrm{SL}_2 (\mathbb{C})$\, be given. We let \,$g=g_+\,D\,g_-$\, with $g_- \in \Lambda_{r,\mathbbm{1}}^- \mathrm{SL}_2 (\mathbb{C}),\,g_+ \in \Lambda_r^+ \mathrm{SL}_2 (\mathbb{C})$, and $D = \left( \begin{smallmatrix} \lambda^n & 0 \\ 0 & \lambda^{-n} \end{smallmatrix} \right)$\, for some \,$n \in \mathbb{Z}$\, be the Birkhoff factorisation of \,$g$\,. We have
  \begin{equation}
  \label{eq:bigcell:GD-pre}
  g_+\,D\,g_- = g = g^* = g_+^* \,D^{-1}\,g_-^*
  \end{equation}
  and therefore
  \begin{equation}
  \label{eq:bigcell:GD}
  g_+^{-1} g_+^* D^{-1} = D g_- g_-^{*^{-1}} \; .
  \end{equation}
  Put $G_+ = g_+^{-1}g_+^*$ and $G_- = g_-g_-^{*-1}$. Clearly $G_+ \in \Lambda_r^+ \mathrm{SL}_2 (\mathbb{C})$ and $G_- \in \Lambda_{r,\mathbbm{1}}^- \mathrm{SL}_2 (\mathbb{C})$. Since the diagonal entries of $G_-$ are positive and real for $\lambda \in \bbR \cap E_r$, we conclude that also $G_+$ has non-zero diagonal entries.

  Now assume \,$n<0$\,. Then the entries of the first column of $G_+ D^{-1}$  are holomorphic functions at $\lambda = 0$, while the entries of the first row of $DG_-$ are holomorphic functions at $\lambda = \infty$. By considering the first diagonal entry of Equation~\eqref{eq:bigcell:GD} we obtain the relation \,$\lambda^{-n} G_+^{(11)} = \lambda^{n} G_-^{(11)}$\, for the first diagonal entries \,$G_\pm^{(11)}$\,
  of \,$G_\pm$\,. This shows $G_+^{(11)} = G_-^{(11)} \equiv 0$, giving the contradiction. The assumption \,$n>0$\, similarly yields a contradiction by considering the second diagonal entries in Equation~\eqref{eq:bigcell:GD}. Therefore we have \,$n=0$\,, which means that \,$g$\, lies in the big cell of \,$\Lambda_{r} \mathrm{SL}_2 (\mathbb{C})$\,. Inserting \,$D=\one$\, in Equation~\eqref{eq:bigcell:GD-pre}
  now gives that \,$g=g_+\,g_- = g_+^*\,g_-^*$\, are ``two'' Birkhoff factorisations of \,$g$\,, so by the uniqueness of the Birkhoff factorisation we have \,$g_\pm^* = g_\pm$\,, hence
  \,$g_\pm \in \Lambda_{r,*}^\pm \mathrm{SL}_2(\bbC)$\,. 
\end{proof}

For any \,$g \in \Lambda_{r,*,\mathbbm{1}}^- \mathrm{SL}_2 (\mathbb{C})$\, and \,$F \in \Lambda_{r,*}^+ \mathrm{SL}_2(\bbC)$\, we have \,$g\,F \in \Lambda_{r,*} \mathrm{SL}_2(\mathbb{C})$\,, and hence
it is a consequence of Lemma~\ref{L:bigcell} that \,$g\,F$\, lies in the big cell of \,$\Lambda_{r} \mathrm{SL}_2(\mathbb{C})$\,. We thus have a unique Birkhoff decomposition
$$ gF = (gF)_+\,(gF)_- \quad\text{with}\quad (gF)_+ \in \Lambda_{r,*}^+ \mathrm{SL}_2 (\mathbb{C}),\; (gF)_- \in \Lambda_{r,*,\mathbbm{1}}^- \mathrm{SL}_2 (\mathbb{C}) \; . $$
The \emph{dressing action} is the group action of \,$\Lambda_{r,*,\one}^-  \mathrm{SL}_2 (\mathbb{C})$\, on \,$\Lambda_{r,*}^+  \mathrm{SL}_2 (\mathbb{C})$\, given by
$$ \Lambda_{r,*,\one}^-  \mathrm{SL}_2 (\mathbb{C}) \times \Lambda_{r,*}^+  \mathrm{SL}_2 (\mathbb{C}) \to \Lambda_{r,*}^+  \mathrm{SL}_2 (\mathbb{C}), \; (g,F) \mapsto g\# F := (g\,F)_+ \; . $$
Let us first verify that this indeed defines an action on extended frames. When the dependence on $r$ is insignificant, we omit it.
\begin{lemma}\label{th:newV} Let $F$ be an extended frame, and $F^{-1}\tfrac{\mathrm{d}F}{\mathrm{d}x} = A_0 + \lambda A_1$ where $A_1 = \tfrac{1}{2}\,\varepsilon$. Let $h \in \Lambda_{*,\mathbbm{1}}^- \mathrm{SL}_2 (\mathbb{C})$, and $hF = G k$ the Birkhoff decomposition, so that $G = h\# F$. Expanding $k = \mathbbm{1} + k_1\lambda^{-1} + O(\lambda^{-2})$, then
\begin{equation} \label{eq:newV}
	G^{-1}\tfrac{\mathrm{d}G}{\mathrm{d}x} = F^{-1}\tfrac{\mathrm{d}F}{\mathrm{d}x} + [k_1,\,A_1]\,.
\end{equation}
\end{lemma}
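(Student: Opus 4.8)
The plan is to compute $G^{-1}\tfrac{dG}{dx}$ directly from the Birkhoff factorisation $hF = Gk$ and the ODE satisfied by $F$, tracking the analytic behaviour of each factor in $\lambda$. First I would differentiate $hF = Gk$ with respect to $x$. Since $h$ depends only on $\lambda$ (it is a fixed loop in $\Lambda_{*,\one}^-\mathrm{SL}_2(\bbC)$, independent of $x$), we get $h\,\tfrac{dF}{dx} = \tfrac{dG}{dx}\,k + G\,\tfrac{dk}{dx}$. Using $hF = Gk$ to write $h = Gk F^{-1}$, substitute and rearrange to obtain
\begin{equation*}
  G^{-1}\tfrac{dG}{dx} = k\,\bigl(F^{-1}\tfrac{dF}{dx}\bigr)\,k^{-1} - \tfrac{dk}{dx}\,k^{-1} \;.
\end{equation*}
Now insert $F^{-1}\tfrac{dF}{dx} = A_0 + \lambda A_1$ on the right-hand side.

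The key point is then a splitting/uniqueness argument on the two sides of this identity. The left-hand side $G^{-1}\tfrac{dG}{dx}$ extends holomorphically to $D_r$ (since $G = h\#F \in \Lambda_{r,*}^+\mathrm{SL}_2(\bbC)$), so in particular it is bounded as $\lambda \to 0$ and has no negative powers of $\lambda$. On the right-hand side, $k \in \Lambda_{r,*,\one}^-\mathrm{SL}_2(\bbC)$, so $k = \one + k_1\lambda^{-1} + O(\lambda^{-2})$, $k^{-1} = \one - k_1\lambda^{-1} + O(\lambda^{-2})$, and $\tfrac{dk}{dx}\,k^{-1} = O(\lambda^{-1})$ extends holomorphically to $E_r$ vanishing at $\infty$. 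Expanding the conjugation,
\begin{equation*}
  k\,(A_0 + \lambda A_1)\,k^{-1} = \lambda A_1 + \bigl(A_0 + [k_1, A_1]\bigr) + O(\lambda^{-1}) \;,
\end{equation*}
using that $A_1 = \tfrac12\varepsilon$ is constant in $\lambda$. Therefore the right-hand side equals $\lambda A_1 + (A_0 + [k_1,A_1]) + (\text{terms in }O(\lambda^{-1}))$, where the $O(\lambda^{-1})$ part collects both the tail of the conjugation and $-\tfrac{dk}{dx}k^{-1}$.

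The conclusion follows by matching analytic types: the full expression $G^{-1}\tfrac{dG}{dx}$ is, a priori, at most linear in $\lambda$ (this needs a brief justification — e.g.\ the standard fact from the dressing construction that $G^{-1}\tfrac{dG}{dx}$ has the same polynomial form $A_0' + \lambda A_1'$ as $F^{-1}\tfrac{dF}{dx}$, which can itself be read off by noting $G^{-1}\tfrac{dG}{dx}$ is holomorphic on $D_r$ while the right-hand side above has at most a simple pole structure at $\infty$ coming from the $\lambda A_1$ term, and an entire function bounded by $|\lambda|$ near $\infty$ is affine). Granting this, the positive-power and constant parts of both sides must agree, while the genuinely $O(\lambda^{-1})$ part on the right, being holomorphic on $D_r$ only through cancellation, must vanish against the left-hand side's regularity — more precisely, the difference of the two sides is holomorphic on all of $\CPone$ and hence constant, and being $O(\lambda^{-1})$ at $\infty$ it is zero. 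Reading off the $\lambda^1$ and $\lambda^0$ coefficients gives $A_1' = A_1$ and $A_0' = A_0 + [k_1, A_1]$, i.e.\ exactly $G^{-1}\tfrac{dG}{dx} = F^{-1}\tfrac{dF}{dx} + [k_1, A_1]$, which is \eqref{eq:newV}.

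The main obstacle I anticipate is making the analyticity bookkeeping fully rigorous: one must argue carefully that $G^{-1}\tfrac{dG}{dx}$ is not merely holomorphic on $D_r$ but globally at most affine in $\lambda$, so that comparing Laurent expansions at $\infty$ on both sides is legitimate. The cleanest route is to observe that $G = (hF)_+$ inherits from $F$ (which is entire in $\lambda$ with $F^{-1}\tfrac{dF}{dx}$ affine) the property that $G$ extends holomorphically to all of $\bbC$ — since $h$ is holomorphic and invertible off $E_r$ and the factor $k$ carries the pole behaviour near $\infty$ — so that $G^{-1}\tfrac{dG}{dx}$ is entire; combined with the boundedness-by-$|\lambda|$ estimate from the conjugation formula above, Liouville forces it to be affine. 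Once that is in hand, everything else is a short coefficient comparison.
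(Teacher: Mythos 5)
Your proposal is correct and takes essentially the same route as the paper: differentiate $G = hFk^{-1}$ to obtain $G^{-1}\tfrac{\mathrm{d}G}{\mathrm{d}x} = k\,F^{-1}\tfrac{\mathrm{d}F}{\mathrm{d}x}\,k^{-1} - \tfrac{\mathrm{d}k}{\mathrm{d}x}\,k^{-1}$, note that $\tfrac{\mathrm{d}k}{\mathrm{d}x}\,k^{-1} = O(\lambda^{-1})$, and read off the $\lambda^1$ and $\lambda^0$ coefficients of the conjugated term. The only difference is that you justify in detail (via $G = hFk^{-1}$ being entire in $\lambda$ together with a Liouville argument) why $G^{-1}\tfrac{\mathrm{d}G}{\mathrm{d}x}$ is affine in $\lambda$, a point the paper simply asserts ``by construction''.
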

\begin{proof}
From $G = hF k^{-1}$ we obtain
\[
	G^{-1}\tfrac{\mathrm{d}G}{\mathrm{d}x} = kF^{-1}\tfrac{\mathrm{d}F}{\mathrm{d}x} k^{-1} - \tfrac{\mathrm{d}k}{\mathrm{d}x} \,k^{-1}\,.
\]
By construction $G^{-1} \tfrac{\mathrm{d}G}{\mathrm{d}x}$ has only $\lambda^0$ and $\lambda^1$ terms. Since $dk \,k^{-1} \in O(\lambda^{-1})$, these can only come from 
\[
	kF^{-1}\tfrac{\mathrm{d}F}{\mathrm{d}x} k^{-1} = \left( \mathbbm{1} + k_1\lambda^{-1} + O(\lambda^{-2}) \right) (A_0 + \lambda A_1) \left(\mathbbm{1} - k_1\lambda^{-1} + O(\lambda^{-2}) \right)\,.
\]
Hence the $\lambda^0$ term is $A_0 + [k_1,\,A_1]$, while the $\lambda^1$ term remains $A_1$.
\end{proof}
We will apply dressing to the extended frames of suitable finite gap potentials to obtain potentials whose monodromy at the Sym point is semisimple. The dressing factors
\,$g \in \Lambda_{r,*,\one}^- \mathrm{SL}_2(\bbC)$\,
which we will use for this purpose are of a special kind; they are called \emph{simple factors} \cite{TerU}. We collect the relevant facts about dressing with simple factors in the following lemma:
\begin{lemma} \label{L:dressing} 
Let \,$q\in L^2([0,T],\bbC)$\,, with corresponding extended frame \,$F$\, and monodromy \,$M$\,. Moreover let \,$\lambda_* \in \bbC \setminus \bbR$\,, \,$L \in \mathbb{CP}^1$\, and \,$r>|\lambda_*|$\, be given.
  \begin{itemize}
  \item[(i)]
    Let \,$\pi_L: \bbC^2 \to \bbC^2$\, be the Hermitian projection onto \,$L$\, and \,$\pi_L^\perp := \one - \pi_L$\,.
    Note that \,$\pi_L^\perp$\, is the Hermitian projection onto \,$L^\perp \in \mathbb{CP}^1$\,.
    Then the simple factor corresponding to \,$(\lambda_*,L)$\, is given by
    $$ 	g_{\lambda_*,L}(\lambda) = \tfrac{1}{\sqrt{(1-\lambda_*\,\lambda^{-1})(1-\overline{\lambda_*}\,\lambda^{-1})}} \left[ (1-\lambda_*\,\lambda^{-1})\pi_L + (1-\overline{\lambda_*}\,\lambda^{-1})\pi_L^\perp \right] \; . $$
Then \,$g_{\lambda_*,L} \in \Lambda_{r,*,\one}^-\mathrm{SL}_2(\bbC)$\,,and \,$g_{\lambda_*,L}$\, has at \,$\lambda=\infty$\, the asymptotic expansion
    $$ g_{\lambda_*,L}(\lambda) = \one + \tfrac{\overline{\lambda_*}-\lambda_*}{2}(\pi_L-\pi_L^\perp)\,\lambda^{-1} + O(\lambda^{-2}) $$
    and we have
    $$ g_{\lambda_*,L}^{-1}(\lambda) = \tfrac{1}{\sqrt{(1-\lambda_*\,\lambda^{-1})(1-\overline{\lambda_*}\,\lambda^{-1})}} \left[ (1-\overline{\lambda_*}\,\lambda^{-1})\pi_L + (1-\lambda_*\,\lambda^{-1})\pi_L^\perp \right] = g_{\lambda_*,L^\perp}(\lambda) \; . $$
  \item[(ii)]
    The dressing of the extended frame \,$F$\, with \,$g_{\lambda_*,L}$\, is given by
    $$ g_{\lambda_*,L} \# F = g_{\lambda_*,L}\, F \,g_{\lambda_*,L'}^{-1} \quad\text{with}\quad L' := F(t,\,\lambda_*)^{-1}L \; . $$
    The apparent singularities of \,$g_{\lambda_*,L}\#F$\, at \,$\lambda=\lambda_*,\overline{\lambda_*}$\, are removable.
  \item[(iii)]
    If \,$L$\, is an eigenline of \,$M(\lambda_*)$\, (i.e.~if \,$M(\lambda_*) L = L$\, holds) then the dressed monodromy \,$g_{\lambda_*,L} \# M$\, of \,$g_{\lambda_*,L}\# F$\, is given by
    $$ g_{\lambda_*,L} \# M = g_{\lambda_*,L}\,M\,g_{\lambda_*,L}^{-1} \; . $$
    In particular, the trace functions and hence the spectral curves corresponding to \,$M$\, and to \,$g_{\lambda_*,L} \# M$\, are equal.
  \item[(iv)]
    There exists one and only one potential in \,$L^2([0,T],\bbC)$\, whose extended frame is \,$g_{\lambda_*,L}\# F$\,. We denote this potential by \,$g_{\lambda_*,L} \# q$\, and call it the
    \emph{dressing} of \,$q$\, by \,$g_{\lambda_*,L}$\,. We have \,$g_{\lambda_*,L} \# q = q + c$\,, where \,$c\in L^2([0,T],\bbC)$\, denotes the upper-right entry of the
    Hermitian matrix \,$2\mi(\lambda_*-\overline{\lambda_*})\,\pi_{L'}$\, with \,$L' = F(t,\,\lambda_*)^{-1}L$\,.
  \item[(v)]
    Let \,$q \in L^2([0,T],\bbC)$\,. Then \,$g_{\lambda_*,L}\# q$\, is of finite gap if and only if \,$q$\, is of  finite gap. 
  \end{itemize}
\end{lemma}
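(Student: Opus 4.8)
The plan is to prove the finite gap property is preserved under simple factor dressing by combining three facts from the preceding parts of the lemma together with the characterisation of finite gap in terms of the eigenline curve. First I would invoke part~(iii): by hypothesis $L$ is an eigenline of $M(\lambda_*)$, so the dressed monodromy is $g_{\lambda_*,L}\#M = g_{\lambda_*,L}\,M\,g_{\lambda_*,L}^{-1}$, a conjugation by an invertible holomorphic matrix away from $\lambda = \lambda_*, \overline{\lambda_*}$. In particular the trace function $\Delta$, and hence the spectral curve $\Sigma$, is unchanged by the dressing. However, since we are dressing with an eigenline of $M(\lambda_*)$, the spectral curve being preserved is not quite enough by itself --- the finite gap condition was phrased in terms of the \emph{eigenline curve} $\widetilde{\Sigma}$ having finite arithmetic genus, so I need to track how the eigenvector bundle transforms.

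The key step is to understand the effect of conjugation on the eigenvector bundle. Since $g_{\lambda_*,L}\#M = g_{\lambda_*,L}\,M\,g_{\lambda_*,L}^{-1}$, if $v$ is a holomorphic section of the eigenline bundle of $M$ on $\widetilde{\Sigma}$ (with $Mv = \mu v$), then $g_{\lambda_*,L}\,v$ satisfies $(g_{\lambda_*,L}\#M)(g_{\lambda_*,L}\,v) = \mu\,(g_{\lambda_*,L}\,v)$, so $g_{\lambda_*,L}\,v$ is a section of the eigenline bundle of the dressed monodromy. Because $g_{\lambda_*,L}(\lambda) \in \mathrm{SL}_2(\bbC)$ is holomorphic and invertible for $\lambda \notin \{\lambda_*,\overline{\lambda_*}\}$, and its only possible singularities on $\widetilde{\Sigma}$ lie above these two fixed points of $\C$ (independent of the potential), the partial normalisation $\widetilde{\Sigma}$ of $\Sigma$ that makes the eigenvector bundle locally free is the same for $M$ and for $g_{\lambda_*,L}\#M$: the ``middleding'' construction depends only on the local structure of the generalised divisor, and conjugation by a matrix that is holomorphic and invertible near a given point over $\lambda_0$ does not change the local freeness there, while the behaviour over $\lambda_*,\overline{\lambda_*}$ involves only finitely many points. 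Hence $\widetilde{\Sigma}$ (and in particular its arithmetic genus) is unchanged, so $g_{\lambda_*,L}\#q$ is finite gap if and only if $q$ is.

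Alternatively, and perhaps more cleanly, I would argue via the perturbed Fourier coefficients recalled just before Theorem~\ref{thm:Psi-diffeo}: a potential is finite gap if and only if all but finitely many of its perturbed Fourier coefficients $z_k$ vanish, and these are defined purely from $\Delta$ and the off-diagonal entry $b$ of the monodromy, with $\lambda_k$ the points where the diagonal entries of $M$ coincide. Since the spectral curve and $\Delta$ are preserved by (iii), and using part~(iv) which gives $g_{\lambda_*,L}\#q = q + c$ with $c$ bounded and smooth (coming from the entry of $2\mi(\lambda_*-\overline{\lambda_*})\pi_{L'}$), one checks that the asymptotic enumeration of the $\lambda_k$ and the smallness of $z_k$ are governed by the same asymptotics of the monodromy as $\lambda \to \infty$ --- and $g_{\lambda_*,L}(\lambda) = \one + O(\lambda^{-1})$ by part~(i), so the dressed monodromy has the same leading asymptotic behaviour as $M$. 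This forces the finite gap condition to be invariant. The main obstacle I anticipate is the bookkeeping at $\lambda_*,\overline{\lambda_*}$: one must confirm that the removable-singularity statement in part~(ii)--(iii) really does mean that no new branch points or singular points of the eigenline curve are created there (equivalently, that the dressing only moves finitely many spectral data points and changes the genus by zero), and that the equivalence is genuinely symmetric --- which follows since $g_{\lambda_*,L}^{-1} = g_{\lambda_*,L^\perp}$ is again a simple factor, so dressing is invertible by a dressing of the same type.
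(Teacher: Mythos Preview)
Your overall strategy is the same as the paper's: use (iii) to see that the trace function (and hence the spectral curve \,$\Sigma$\,) is preserved, then argue that the eigenline data can only be disturbed above the two points \,$\lambda_*,\overline{\lambda_*}$\,, so the finite-gap property survives. The invertibility remark via \,$g_{\lambda_*,L}^{-1}=g_{\lambda_*,L^\perp}$\, is also exactly the right way to get the ``only if'' direction.

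However, there is a genuine slip in your first argument. You assert that ``the partial normalisation \,$\widetilde{\Sigma}$\, \dots\ is the same for \,$M$\, and for \,$g_{\lambda_*,L}\#M$\,'' and hence that ``its arithmetic genus is unchanged''. This is not true in general: Lemma~\ref{L:dressing:magic} shows precisely that the order of vanishing of \,$M-\tfrac12\Delta\one$\, at \,$\lambda_*$\, \emph{does} change under simple factor dressing, so the local structure of the generalised eigenline divisor --- and with it the partial normalisation \,$\widetilde{\Sigma}$\, and its arithmetic genus --- can genuinely change above \,$\lambda_*,\overline{\lambda_*}$\,. What remains invariant is the \emph{full} normalisation \,$\widehat{\Sigma}$\, (since \,$\Sigma$\, is unchanged), while the local \,$\delta$-invariant of \,$\widetilde{\Sigma}$\, can move only at the finitely many points above \,$\lambda_*,\overline{\lambda_*}$\,. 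That is the argument the paper gives: finiteness of the arithmetic genus of \,$\widetilde{\Sigma}$\, is preserved because the genus changes by at most a finite amount, not because it is literally unchanged. Your proof is easily repaired by replacing ``\,$\widetilde{\Sigma}$\, is unchanged'' with this finite-discrepancy statement.

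A smaller point: you open by saying ``by hypothesis \,$L$\, is an eigenline of \,$M(\lambda_*)$\,''. Part~(v) does not state this hypothesis explicitly; it is, however, needed for the dressed potential to be periodic (equivalently, for (iii) to apply), and the paper's own proof tacitly uses it as well. It would be cleaner to flag this as an implicit assumption rather than a stated hypothesis. Your alternative approach via perturbed Fourier coefficients is plausible but, as you anticipate, the bookkeeping for the off-diagonal entry \,$b$\, near \,$\lambda_*,\overline{\lambda_*}$\, is exactly where the content lies, and it ultimately reduces to the same ``only finitely many points are affected'' observation.
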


\begin{proof}
(i) It follows by direct calculations that the formula for \,$g_{\lambda_*,L}^{-1}$\, is true and that \,$g_{\lambda_*,L}^* = g_{\lambda_*,L}$\, holds. Near \,$\lambda=\infty$\, we have the
  following asymptotic expansion of \,$g_{\lambda_*,L}$\,:
  \begin{equation*} 
  \begin{split}
	g_{\lambda_*,L}(\lambda) &= \frac{1}{\sqrt{(1-\lambda_*\,\lambda^{-1}) (1-\overline{\lambda_*}\,\lambda^{-1})}} \left( (1-\lambda_*\,\lambda^{-1}) \pi_{L} + (1-\overline{\lambda_*}\,\lambda^{-1})\pi^\perp_{L} \right) \\
        &= \left( 1+ \tfrac{\lambda_* +\overline{\lambda_*}}{2} \lambda^{-1} + O(\lambda^{-2}) \right) \left( \mathbbm{1} -\lambda^{-1} ( \lambda_* \pi_{L}+ \overline{\lambda_*} \pi_{L}^\perp ) \right) \\
        &= \mathbbm{1} + \left( \tfrac{\lambda_* +\overline{\lambda_*}}{2}\mathbbm{1} -  \lambda_* \pi_{L} - \overline{\lambda_*} \pi_{L}^\perp  \right) \lambda^{-1} + O(\lambda^{-2})  \\
        &= \mathbbm{1} + \tfrac{\overline{\lambda_*} -\lambda_*}{2} (\pi_{L} - \pi_{L}^\perp ) \lambda^{-1} + O(\lambda^{-2}) \; .
  \end{split}
  \end{equation*}
  Because of \,$r>|\lambda_*|$\,, this asymptotic expansion also shows \,$g_{\lambda_*,L} \in \Lambda_{r,*,\one}^- \mathrm{SL}_2(\bbC)$\,. 
  
(ii) Clearly $g_{L,\lambda_*} \,F\,g^{-1}_{L',\lambda_*}$ satisfies the reality condition, and is holomorphic in $D_r$ away from $\lambda = \lambda_*,\,\overline{\lambda_*}$ where it has at most simple poles. The residues there are
\[
        \Res_{\lambda = \lambda_*}  (g_{L,\lambda_*} \,F\,g^{-1}_{L',\lambda_*}) = 2\lambda_* \,\pi_L^\perp F(t,\,\lambda_*) \pi_{L'} = 0
\]
since $\mathrm{im} (F(t,\,\lambda_*) \pi_{L'} ) \subseteq L$. \\Similarly $\Res_{\lambda = \overline{\lambda_*}}  (g_{L,\overline{\lambda_*}} \,F\,g^{-1}_{L',\overline{\lambda_*}}) = 2\overline{\lambda_*} \,\pi_L F(t,\,\overline{\lambda_*}) \pi_{L'}^\perp = 0$, since $\mathrm{im} (F(t,\,\overline{\lambda_*}) \pi_{L'}^\perp ) \subseteq L^\perp$. Hence $g_{L,\lambda_*} \,F\,g^{-1}_{L',\lambda_*}$ is analytic in $D_r$, which means \,$g_{L,\lambda_*} \,F\,g^{-1}_{L',\lambda_*} \in \Lambda_{r,*}^+ \mathrm{SL}_2(\bbC)$\,. Therefore $g_{L,\lambda_*} F = (g_{L,\lambda_*} \,F\,g^{-1}_{L',\lambda_*})\,(g_{L',\lambda_*})$ is the unique Birkhoff factorization.

(iii) is a direct consequence of (ii).

(iv) We abbreviate \,$g := g_{\lambda_*,L}$\,, \,$G := g\# F$\,, \,$L' = L'(t) := F(t,\,\lambda_*)^{-1}L$\, and \,$h:= g_{\lambda_*,L'} = \one + h_1\,\lambda^{-1} + O(\lambda^{-2})$ with $h_1 = \tfrac{\overline{\lambda_*}-\lambda_*}{2}(\pi_{L'} - \pi_{L'}^\perp)$\,, and the $x$-derivative by a dot. By Lemma~\ref{th:newV} we have 
  \begin{equation}
  \label{eq:dressing:dressed-alpha-2}
  G^{-1}\,\dot{G} = F^{-1}\,\dot{F} + \tfrac12\,[h_1,\,\varepsilon] \; .
  \end{equation}
  Because \,$\varepsilon$\, is diagonal, \,$[h_1,\,\varepsilon]$\, is off-diagonal. Moreover, both \,$h_1$\, and \,$\varepsilon$\, are skew-Hermitian,
  and thus \,$[h_1,\,\varepsilon]$\, is also skew-Hermitian. Therefore we have \,$[h_1,\varepsilon] = c\,\varepsilon_+ + \bar{c}\,\varepsilon_-$\,, where \,$c\in L^2([0,T],\bbC)$\, denotes the
  upper-right entry of the matrix \,$[h_1,\,\varepsilon]$\,. We have
  $$ [h_1,\,\varepsilon] = \tfrac{\overline{\lambda_*}-\lambda_*}{2} [\pi_{L'}-\pi_{L'}^\perp,\,\varepsilon] = (\overline{\lambda_*}-\lambda_*)\,[\pi_{L'},\varepsilon] = \begin{pmatrix} 0 & -2\mi\,\tilde{c} \\ -2\mi\,\bar{\tilde{c}} & 0 \end{pmatrix} \;, $$
  where \,$\tilde{c}$\, denotes the upper-right entry of the matrix \,$\pi_{L'}$\,. It follows that \,$c$\, equals the upper-right entry of the Hermitian matrix \,$2\mi(\lambda_*-\overline{\lambda_*})\,\pi_{L'}$\,.
  Equation~\eqref{eq:dressing:dressed-alpha-2} now shows that
  $$ G^{-1}\,\dot{G} = \tfrac12(\lambda\varepsilon + (q+c)\varepsilon_+ + (\overline{q+c})\varepsilon_-) $$
  holds. This implies that \,$G$\, is the extended frame belonging to the potential \,$q+c \in L^2([0,T],\bbC)$\,, and hence \,$g\# q = q +c$\, holds.

(v) A potential \,$q$\, is of finite gap if and only if the total algebraic genus of the partial normalisation \,$\widetilde{\Sigma}$\, of the spectral curve \,$\Sigma$\, of \,$q$\, on which
  the spectral divisor is locally free, is finite. When we apply the simple factor dressing with \,$g_{\lambda_*,L}$\, to \,$q$\,, the full normalisation \,$\widehat{\Sigma}$\, of \,$q$\,
  remains unchanged and the local \,$\delta$-invariant changes at most at the points above \,$\lambda=\lambda_*$\, and \,$\lambda=\overline{\lambda_*}$\,. From these facts, the statement of (v) follows. 
\end{proof}

In the following lemma we investigate how the order of the zero of \,$M - \tfrac12\Delta\,\one$\, changes under the application of simple factor dressing. These statements
are analogous to those proven by Ehlers-Kn\"orrer in \cite[Section~3, Theorem (i),(iii)]{ehlers-knoerrer} for the B\"acklund transformation for the Korteweg-de Vries equation. 

When \,$\lambda_*\in \bbC$\, is chosen such that \,$\Delta(\lambda_*)^2-4=0$\, holds, then \,$M(\lambda_*)$\, has only a single eigenvalue \,$\mu_* \in \{\pm 1\}$\,.
If \,$M(\lambda_*)$\, is semisimple in this situation, then \,$M(\lambda_*) = \mu_*\,\one$\, holds, so the eigenspace of \,$M(\lambda_*)$\, corresponding to \,$\mu_* = \tfrac12\Delta(\lambda_*)$\,
is 2-dimensional. Despite of this fact, not all eigenvectors of \,$M(\lambda_*)$\, are equivalent. Indeed, if we denote the order of the zero of \,$N := M - \tfrac12\Delta\,\one$\,
at \,$\lambda=\lambda_*$\, by \,$j_0$\,, then \,$\widetilde{N} := \tfrac{1}{(\lambda-\lambda_*)^{j_0}} \,N$\, is holomorphic and non-zero at \,$\lambda=\lambda_*$\,.
Because \,$\widetilde{N}(\lambda_*) \neq 0$\, is nilpotent, its kernel \,$L_*$\, is 1-dimensional. It is distinguished by the fact that it is the line of the holomorphic eigenline bundle of \,$M$\,
on the partial normalisation \,$\widetilde{\Sigma}$\, of \,$\Sigma$\, at \,$(\lambda_*,\mu_*)$\,. Because it also corresponds to the value of the Baker-Akhiezer function associated
to the spectral divisor of \,$M$\, at \,$(\lambda_*,\mu_*)$\,, we call it the \emph{Baker-Akhiezer eigenline} of \,$M(\lambda_*)$\,. 
Part (ii) of the following lemma shows that the Baker-Akhiezer eigenlines also play a special role with respect to simple factor dressing.



\begin{lemma}
  \label{L:dressing:magic}
  Let \,$q\in L^2([0,T],\bbC)$\,, \,$M$\, be the monodromy of \,$q$\,, \,$\Delta = \mathrm{tr} M$\, and \,$\lambda_* \in \bbC\setminus \bbR$\, 
  such that \,$\Delta^2-4$\, has at \,$\lambda=\lambda_*$\, a zero of order \,$n \geq 2$\,. Set  \,$N = M - \tfrac12\Delta\,\one$\,
  and \,$j_0 = \ord_{\lambda_*} N$\,. We have \,$j_0 \leq \lfloor n/2 \rfloor$\,. Moreover let \,$L \in \mathbb{CP}^1$\, be an eigenline of \,$M(\lambda_*)$\,.

  \begin{itemize}
  \item[(i)]
    If \,$j_0\geq 1$\, holds, then \,$\ord_{\lambda_*}(g_{\lambda_*,L} \# M - \tfrac12\Delta\,\one) \geq j_0-1$\,. 
  \item[(ii)]
    If \,$L$\, is the Baker-Akhiezer eigenline of \,$M(\lambda_*)$\,, then we have
    $$ \ord_{\lambda_*}(g_{\lambda_*,L} \# M - \tfrac12\Delta\,\one) = \begin{cases} j_0 + 1 & \text{if \,$j_0 < \lfloor n/2 \rfloor$\,} \\ j_0 & \text{if \,$j_0=\lfloor n/2 \rfloor$\,} \end{cases} \; . $$
  \end{itemize}
\end{lemma}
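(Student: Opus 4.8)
The plan is to work with the explicit formulas from Lemma~\ref{L:dressing}(ii)--(iii). Writing \,$g := g_{\lambda_*,L}$\,, we have \,$g \# M = g\,M\,g^{-1}$\,, so that
$$ g \# M - \tfrac12\Delta\,\one = g\,\bigl(M-\tfrac12\Delta\,\one\bigr)\,g^{-1} = g\,N\,g^{-1} \; . $$
The function \,$g(\lambda)$\, has a simple pole at \,$\lambda=\lambda_*$\,: from part~(i), \,$g(\lambda) = \tfrac{1}{\sqrt{(1-\lambda_*\lambda^{-1})(1-\overline{\lambda_*}\lambda^{-1})}}\bigl[(1-\lambda_*\lambda^{-1})\pi_L + (1-\overline{\lambda_*}\lambda^{-1})\pi_L^\perp\bigr]$\,, and near \,$\lambda=\lambda_*$\, the factor \,$(1-\lambda_*\lambda^{-1})$\, vanishes while its square root appears in the denominator, so \,$g$\, behaves like \,$\tfrac{c}{\sqrt{\lambda-\lambda_*}}\,\pi_L^\perp + O(1)$\,; correspondingly \,$g^{-1} = g_{\lambda_*,L^\perp}$\, behaves like \,$\tfrac{c'}{\sqrt{\lambda-\lambda_*}}\,\pi_L + O(1)$\,. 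Thus the conjugation \,$g\,N\,g^{-1}$\, a priori has a pole of order up to \,$1$\, at \,$\lambda_*$\,; since we know \,$g \# M$\, is holomorphic there (it is the monodromy of an honest potential), the polar part must cancel, and writing \,$N = (\lambda-\lambda_*)^{j_0}\widetilde N$\, with \,$\widetilde N(\lambda_*)\neq 0$\, shows the order of vanishing of \,$g N g^{-1}$\, is at least \,$j_0-1$\,. This gives (i) immediately, modulo bookkeeping with the half-integer powers of \,$\lambda-\lambda_*$\, coming from the square roots (which combine correctly because \,$g$\, and \,$g^{-1}$\, each contribute a \,$(\lambda-\lambda_*)^{-1/2}$\,).

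For part~(ii), the key is to compute the leading term of \,$g N g^{-1}$\, at \,$\lambda=\lambda_*$\, more precisely when \,$L = L_*$\, is the Baker--Akhiezer eigenline, i.e.\ \,$L_* = \ker \widetilde N(\lambda_*)$\,. First I would expand
$$ g(\lambda) = \frac{1}{\sqrt{\lambda-\lambda_*}}\,\bigl(\lambda_*\lambda^{-1}\bigr)^{1/2}\bigl(1-\overline{\lambda_*}\lambda^{-1}\bigr)^{-1/2}\Bigl[-\tfrac{\lambda-\lambda_*}{\lambda}\,\pi_{L_*} \cdot \lambda_*^{-1}(\cdots) + \cdots\Bigr], $$
more usefully: write \,$g = (\lambda-\lambda_*)^{-1/2}\,\pi_{L_*}^\perp\,e + e_0 + O(\sqrt{\lambda-\lambda_*})$\, and \,$g^{-1} = (\lambda-\lambda_*)^{-1/2}\,\pi_{L_*}\,\tilde e + \tilde e_0 + O(\sqrt{\lambda-\lambda_*})$\, for suitable nonzero scalars \,$e,\tilde e$\, and bounded matrix-valued \,$e_0,\tilde e_0$\, with \,$\pi_{L_*}e_0 = \pi_{L_*}$\,-type normalisations. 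Then
$$ g\,N\,g^{-1} = (\lambda-\lambda_*)^{j_0-1}\,e\tilde e\,\pi_{L_*}^\perp\,\widetilde N\,\pi_{L_*} + (\lambda-\lambda_*)^{j_0-1/2}\bigl(\cdots\bigr) + \dots\,, $$
and the leading coefficient is \,$e\tilde e\,\pi_{L_*}^\perp\,\widetilde N(\lambda_*)\,\pi_{L_*}$\,. But \,$L_* = \ker\widetilde N(\lambda_*)$\,, so \,$\widetilde N(\lambda_*)\,\pi_{L_*} = 0$\,, hence this term vanishes and the order of vanishing jumps to at least \,$j_0+1$\, (the half-integer term between them also vanishes, because \,$g N g^{-1}$\, is holomorphic in \,$\lambda$\,, so no odd power of \,$\sqrt{\lambda-\lambda_*}$\, can survive). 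This shows \,$\ord_{\lambda_*}(g\#M - \tfrac12\Delta\one) \geq j_0+1$\,.

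**The main obstacle.** The remaining, and hardest, point is to show that in the case \,$j_0 = \lfloor n/2\rfloor$\, the order is \emph{exactly} \,$j_0$\, rather than \,$\geq j_0+1$\, — i.e.\ that the improvement in (ii) really does stop. Here I would argue via the determinant/trace constraint: \,$N = M - \tfrac12\Delta\,\one$\, satisfies \,$\operatorname{tr} N = 0$\, and \,$\det N = \det M - \tfrac12\Delta\operatorname{tr}M + \tfrac14\Delta^2 = 1 - \tfrac14\Delta^2 = -\tfrac14(\Delta^2-4)$\,, so \,$\det N$\, has a zero of order exactly \,$n$\, at \,$\lambda_*$\,. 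Since \,$N = (\lambda-\lambda_*)^{j_0}\widetilde N$\, with \,$\widetilde N$\, traceless and \,$\widetilde N(\lambda_*)\neq 0$\,, we get \,$\det \widetilde N = (\lambda-\lambda_*)^{n-2j_0}\cdot(\text{unit})$\,; when \,$n = 2j_0$\, this forces \,$\det\widetilde N(\lambda_*)\neq 0$\,, so \,$\widetilde N(\lambda_*)$\, is an \emph{invertible} traceless matrix — in particular it has no kernel, so the ``Baker--Akhiezer eigenline'' construction degenerates and \,$L_*$\, must be interpreted via the eigenline bundle on \,$\widetilde\Sigma$\, directly. In that case the conjugated matrix \,$g N g^{-1}$\, genuinely has a pole-cancellation that produces a nonzero \,$(\lambda-\lambda_*)^{j_0-1}$\, term (one computes it from the \,$O(1)$\, parts \,$e_0,\tilde e_0$\, of \,$g,g^{-1}$\, combined with the full \,$\widetilde N$\,, not just \,$\widetilde N(\lambda_*)$\,), which I would check does not vanish by using \,$\det\widetilde N(\lambda_*)\neq 0$\, together with the fact that \,$L_*$\, is then an eigenline of the invertible matrix \,$\widetilde N(\lambda_*)$\,. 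The bulk of the work is the careful expansion of \,$g N g^{-1}$\, to the required order in \,$\sqrt{\lambda-\lambda_*}$\, while tracking the projections \,$\pi_{L_*},\pi_{L_*}^\perp$\,; also establishing the a priori bound \,$j_0\leq\lfloor n/2\rfloor$\, is exactly the \,$\det N$-computation just described. I expect part (ii) to require Lemma~\ref{L:dressing}(ii)'s description of the removable singularity at \,$\lambda_*$\, (namely \,$\operatorname{im}(F(t,\lambda_*)\pi_{L'})\subseteq L$\,) to identify precisely which eigenline transforms correctly, so the Baker--Akhiezer characterisation via \,$\ker\widetilde N$\, must be matched to the \,$L'$\, appearing there.
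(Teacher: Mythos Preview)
Your overall strategy---writing \,$g\#M-\tfrac12\Delta\one = gNg^{-1}$\, and expanding at \,$\lambda_*$\,---is the paper's as well. Two remarks, one a simplification and one a genuine gap.

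\textbf{Simplification.} The half-integer bookkeeping is unnecessary. From the explicit formula one has \,$g = r^{1/2}\pi_L + r^{-1/2}\pi_L^\perp$\, and \,$g^{-1}=r^{1/2}\pi_L^\perp + r^{-1/2}\pi_L$\, with \,$r=\tfrac{1-\lambda_*\lambda^{-1}}{1-\overline{\lambda_*}\lambda^{-1}}$\,, so
\[
gNg^{-1} \;=\; \pi_L N\pi_L + \pi_L^\perp N\pi_L^\perp + r\,\pi_L N\pi_L^\perp + r^{-1}\,\pi_L^\perp N\pi_L
\]
involves only integer powers of \,$r$\,. Since \,$r$\, has a simple zero and \,$r^{-1}$\, a simple pole at \,$\lambda_*$\,, part~(i) is immediate from this display.

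\textbf{The gap in (ii).} With \,$N=(\lambda-\lambda_*)^{j_0}\widetilde N$\, and coordinates in which \,$L=[1{:}0]$\,, write \,$\widetilde N = \bigl(\begin{smallmatrix}\gamma_{11}&\gamma_{12}\\ \gamma_{21}&-\gamma_{11}\end{smallmatrix}\bigr)$\,; then the display above gives \,$g\widetilde N g^{-1}=\bigl(\begin{smallmatrix}\gamma_{11}&r\gamma_{12}\\ r^{-1}\gamma_{21}&-\gamma_{11}\end{smallmatrix}\bigr)$\,. The Baker--Akhiezer condition \,$L=\ker\widetilde N(\lambda_*)$\, yields \,$\gamma_{11}(\lambda_*)=\gamma_{21}(\lambda_*)=0$\, and \,$\gamma_{12}(\lambda_*)\neq 0$\,, which makes \,$r^{-1}\gamma_{21}$\, holomorphic and hence gives \,$\ord_{\lambda_*}(gNg^{-1})\geq j_0$\,---but \emph{only} \,$\geq j_0$\,. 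Your claim that the order jumps to \,$\geq j_0+1$\, requires \,$g\widetilde N g^{-1}$\, itself to vanish at \,$\lambda_*$\,, i.e.\ \,$(r^{-1}\gamma_{21})(\lambda_*)=0$\,, i.e.\ \,$\gamma_{21}$\, to have a \emph{double} zero. You do not establish this; indeed your argument uses only \,$\widetilde N(\lambda_*)\pi_L=0$\,, which is available whenever \,$\widetilde N(\lambda_*)$\, is nilpotent---in particular for \,$n$\, odd with \,$j_0=\lfloor n/2\rfloor=(n-1)/2$\,, where the lemma asserts the order is \,$j_0$\,, not \,$j_0+1$\,, so your argument as stated would overshoot. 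The missing step is precisely your determinant computation, correctly placed: \,$\det\widetilde N=-\gamma_{11}^2-\gamma_{12}\gamma_{21}$\, has a zero of order \,$n-2j_0$\,, and the hypothesis \,$j_0<\lfloor n/2\rfloor$\, gives \,$n-2j_0\geq 2$\,; since \,$\gamma_{11}^2$\, vanishes to order \,$\geq 2$\, and \,$\gamma_{12}(\lambda_*)\neq 0$\,, one obtains \,$\ord_{\lambda_*}\gamma_{21}\geq 2$\,. Now all entries of \,$g\widetilde N g^{-1}$\, vanish at \,$\lambda_*$\,, while the \,$(1,2)$-entry \,$r\gamma_{12}$\, vanishes to order exactly~\,$1$\,, pinning down \,$\ord_{\lambda_*}(gNg^{-1})=j_0+1$\,.

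\textbf{The case \,$j_0=\lfloor n/2\rfloor$\,.} This is far simpler than you suggest. The dressed monodromy has the same trace \,$\Delta$\,, so \,$N':=g\#M-\tfrac12\Delta\one$\, still satisfies \,$\det N'=-\tfrac14(\Delta^2-4)$\,, whence \,$\ord_{\lambda_*}N'\leq\lfloor n/2\rfloor=j_0$\,. Combined with the lower bound \,$\geq j_0$\, already obtained, equality follows. No further expansion, and no appeal to the removable-singularity description in Lemma~\ref{L:dressing}(ii), is needed.
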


\begin{proof}
  We begin by noting that Equation~\eqref{eq:closing:Delta2-4-magic} implies \,$j_0 \leq \lfloor n/2 \rfloor$\,.
  We denote by \,$\pi_L$\, the Hermitian projection onto \,$L$\, and set \,$\pi_L^\perp = \one - \pi_L$\,
  as in Lemma~\ref{L:dressing}(i). In the situation of (i), we then have by Lemma~\ref{L:dressing}(iii),(i)
  \begin{align}
    \bigr(g_{\lambda_*,L} \# M \bigr) - \tfrac12\Delta \one
    & = g_{\lambda_*,L}\,M\,g_{\lambda_*,L}^{-1} - \tfrac12\Delta \one = g_{\lambda_*,L}\,N\,g_{\lambda_*,L}^{-1} \notag \\
    \label{eq:dressing:magic:M-pi}
    & = \pi_L N \pi_L + \pi^\perp_L N \pi^\perp_L + r\, \pi_L N \pi^\perp_L + r^{-1}\, \pi_L^\perp N \pi_L
  \end{align}
  with
  \begin{equation}
  \label{eq:dressing:magic:r}
  r = \frac{1-\lambda_*\,\lambda^{-1}}{1-\overline{\lambda_*}\,\lambda^{-1}} \; .
  \end{equation}
Now \,$N$\, has a zero of order \,$j_0$\, at \,$\lambda=\lambda_*$\,, and therefore the terms of the form \,$\pi_L^{(\perp)}\,N\,\pi_L^{(\perp)}$\, all have zeros of order at least \,$j_0$\,.
Further \,$r$\, has a first order zero at \,$\lambda=\lambda_*$\,, therefore it follows from Equation~\eqref{eq:dressing:magic:M-pi} that \,$\bigr(g_{\lambda_*,L} \# M\bigr) - \tfrac12\Delta \one$\, has a zero of order at least \,$j_0-1$\,, completing the proof of (i). 

In the situation of (ii), we note that \,$\det N$\, is the value of the characteristic polynomial \,$\mu^2 - \Delta\,\mu + 1$\, of \,$M$\, at \,$\mu=\tfrac12\Delta$\,, therefore we have
  $$ \det N = \bigr( \tfrac12\Delta \bigr)^2 - \Delta\, \tfrac12\Delta + 1 = \bigr( -\tfrac14 \bigr) \, (\Delta^2-4) \;, $$
  whence it follows that \,$\det N$\, has at \,$\lambda=\lambda_*$\, a zero of order \,$n$\,. 
  We now define the holomorphic, trace-free endomorphism-valued function \,$\widetilde{N} := \tfrac{1}{(\lambda-\lambda_*)^{j_0}}\,N$\,. Here \,$N(\lambda_*)$\, is non-zero
  and nilpotent, hence its kernel is 1-dimensional; it is equal to the Baker-Akhiezer eigenline \,$L$\, of \,$M(\lambda_*)$\,. We have
  $$ \det \widetilde{N} = \frac{1}{(\lambda-\lambda_*)^{2j_0}}\,\det N \;, $$
  hence \,$\det \widetilde{N}$\, has at \,$\lambda=\lambda_*$\, a zero of order \,$n-2j_0$\,. 

  We now consider such coordinates of \,$\bbC^2$\, that \,$L=[1:0]\in\CPone$\, and \,$L^\perp = [0:1] \in \CPone$\,. With respect to such coordinates, the holomorphic function \,$\widetilde{N}$\,
  into the tracefree endomorphisms is represented by a matrix of the form
  $$ \begin{pmatrix} \gamma_{11} & \gamma_{12} \\ \gamma_{21} & -\gamma_{11} \end{pmatrix} $$
  with holomorphic functions \,$\gamma_{11},\gamma_{12},\gamma_{21}$\,. Because \,$\widetilde{N}(\lambda_*)$\, is non-zero and has kernel \,$[1:0]$\,, we have \,$\gamma_{11}(\lambda_*)=\gamma_{21}(\lambda_*)=0$\,
  and \,$\gamma_{12}(\lambda_*)\neq 0$\,. Analogously to Equation~\eqref{eq:dressing:magic:M-pi} we have
  $$ g_{\lambda_*,L}\,\widetilde{N}\,g_{\lambda_*,L}^{-1} = \pi_L \widetilde{N} \pi_L + \pi^\perp_L \widetilde{N} \pi^\perp_L + r\, \pi_L \widetilde{N} \pi^\perp_L + r^{-1}\, \pi_L^\perp \widetilde{N} \pi_L $$
  with $r$\, as in equation~\eqref{eq:dressing:magic:r}, and therefore \,$g_{\lambda_*,L}\,\widetilde{N} \,g_{\lambda_*,L}^{-1}$\, is represented by the matrix
  \begin{equation}
  \label{eq:dressing:magic:tildeN-rep}
  \begin{pmatrix} \gamma_{11} & r\, \gamma_{12} \\ r^{-1}\,\gamma_{21} & -\gamma_{11} \end{pmatrix} \;.
  \end{equation}
  Because \,$r$\, has a first-order zero at \,$\lambda=\lambda_*$\, and \,$\gamma_{21}$\, is zero there, we see from this representation that 
  \,$g_{\lambda_*,L}\,\widetilde{N}\,g_{\lambda_*,L}^{-1}$\, is holomorphic at \,$\lambda=\lambda_*$\,. It follows that
  \begin{equation}
  \label{eq:dressing:magic:dresstilde}  
  g_{\lambda_*,L}\# M - \tfrac12\Delta \one = (\lambda-\lambda_*)^{j_0} \, g_{\lambda_*,L}\,\widetilde{N}\,g_{\lambda_*,L}^{-1}
  \end{equation}
  has a zero of order at least \,$j_0$\, regardless of the value of \,$j_0$\,. In the case \,$j_0 = \lfloor n/2 \rfloor$\, notice that the order of this zero cannot be larger than
  \,$\lfloor n/2 \rfloor=j_0$\,. 

  We now consider the case \,$j_0 < \lfloor n/2 \rfloor$\,. Then the order of the zero of \,$\det \widetilde{N}=-\gamma_{11}^2-\gamma_{12}\,\gamma_{21}$\, is \,$n-2j_0 \geq 2$\,. Because \,$\gamma_{11}^2$\,
  has a zero of order at least \,$2$\,, and \,$\gamma_{12}(\lambda_*)\neq 0$\,, it follows that \,$\gamma_{21}$\, has a zero of order at least \,$2$\,. This fact together with \,$\gamma_{11}(\lambda_*)=0$\,,
  \,$\gamma_{12}(\lambda_*)\neq 0$\, implies that \,$g_{\lambda_*,L}\,\widetilde{N}\,g_{\lambda_*,L}^{-1}$\, has at \,$\lambda=\lambda_*$\, a zero of the exact order \,$1$\,, see the
  representation~\eqref{eq:dressing:magic:tildeN-rep}. It follows by Equation~\eqref{eq:dressing:magic:dresstilde} that \,$g_{\lambda_*,L}\# M - \tfrac12\Delta\one$\,
  has a zero of the exact order \,$j_0+1$\,. 
\end{proof}

\begin{proposition}
\label{prop:closedpot-dense}
The set of potentials of $T$-periodic finite gap curves in $\bbH^3$ with some total torsion \,$\theta T \in \R$\, is $L^2$-dense in the set of potentials of all $T$-periodic curves in $\bbH^3$ with that total torsion.
\end{proposition}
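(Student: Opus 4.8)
The plan is to approximate the given potential indirectly: first replace it by an auxiliary potential obtained by ``undoing'' a simple factor dressing at the Sym point, then approximate \emph{that} potential by finite gap potentials via Theorem~\ref{thm:Psi-diffeo}, and finally dress the approximants forward — controlled by the Baker--Akhiezer eigenline — so as to restore the full closing condition for $\bbH^3$.

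Let $q$ be the potential of a $T$-periodic curve in $\bbH^3$ with total torsion $\theta T$. If $q$ is already of finite gap we are done, so assume the arithmetic genus of $\widetilde{\Sigma}$ is infinite, hence $\geq 4$. By Lemma~\ref{lem:closing} (after replacing $\varepsilon$ by $-\varepsilon$ if necessary) we have $\Delta(\mi+\theta)=2$, $\Delta'(\mi+\theta)=0$ and $M(\mi+\theta)=\one$; put $N=M-\tfrac12\Delta\,\one$, $j_0=\ord_{\mi+\theta}N\geq 1$, and let $L_*\in\CPone$ be the Baker--Akhiezer eigenline of $M(\mi+\theta)$. A local analysis of $M-\tfrac12\Delta\,\one$ near $\mi+\theta$ that runs the argument of the proof of Lemma~\ref{L:dressing:magic} in the opposite direction shows that
$$ q^\flat := \bigl(g_{\mi+\theta,L_*}^{-1}\bigr)^{j_0}\# q = \bigl(g_{\mi+\theta,L_*^\perp}\bigr)^{j_0}\# q $$
(each of the $j_0$ undressings using again the line $L_*$, since $L_*$ is the Baker--Akhiezer eigenline at every stage) satisfies $\ord_{\mi+\theta}\bigl(M^{q^\flat}-\tfrac12\Delta^{q^\flat}\one\bigr)=0$, so that $M^{q^\flat}(\mi+\theta)$ is a \emph{nontrivial Jordan block} whose unique eigenline is $L_*$; moreover, by Lemma~\ref{L:dressing}(iii), $\Delta^{q^\flat}=\Delta$ and $\widetilde{\Sigma}^{q^\flat}\cong\widetilde{\Sigma}$, so $\Delta^{q^\flat}(\mi+\theta)=2$, $(\Delta^{q^\flat})'(\mi+\theta)=0$ and $\widetilde{\Sigma}^{q^\flat}$ still has genus $\geq 4$. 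Since dressing is a group action, $\bigl(g_{\mi+\theta,L_*}\bigr)^{j_0}\# q^\flat=q$.

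Now apply Theorem~\ref{thm:Psi-diffeo} to $q^\flat$ — its proof uses only that the genus is $\geq 4$, via Lemma~\ref{lem:closing-linindep} — and pull back the sequence that truncates the perturbed Fourier coefficients $z_k(q^\flat)$ (see \cite[Section~3]{kleinkilian1}) to $|k|\leq N+n$; this yields potentials $q_n^\flat\to q^\flat$ in $L^2$, each of finite gap by \cite[Proposition~3.4]{kleinkilian1}, with $\Delta^{q_n^\flat}(\mi+\theta)=2$ and $(\Delta^{q_n^\flat})'(\mi+\theta)=0$, so that $(\Delta^{q_n^\flat})^2-4$ vanishes at $\mi+\theta$ to order $n_n\geq 2$. (When $j_0\geq 2$ one needs in addition $n_n\geq 2j_0$, which is obtained from the evident strengthening of Theorem~\ref{thm:Psi-diffeo} that also controls $\Delta^{(2)}(\mi+\theta),\dots,\Delta^{(2j_0-1)}(\mi+\theta)$; the accompanying strengthening of Lemma~\ref{lem:closing-linindep} only costs a larger lower bound on the genus, which is harmless as it is infinite.) For $n$ large, $M^{q_n^\flat}(\mi+\theta)$ is close to the nontrivial Jordan block $M^{q^\flat}(\mi+\theta)$, hence is itself a nontrivial Jordan block with unique eigenline $L_n=\ker\bigl(M^{q_n^\flat}(\mi+\theta)-\one\bigr)$, which is its Baker--Akhiezer eigenline. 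Applying Lemma~\ref{L:dressing:magic}(ii) $j_0$ times (each step legitimate by the order assumption on $\det N$), the potential $q_n:=\bigl(g_{\mi+\theta,L_n}\bigr)^{j_0}\# q_n^\flat$ satisfies $\ord_{\mi+\theta}\bigl(M^{q_n}-\tfrac12\Delta^{q_n}\one\bigr)\geq 1$, i.e.\ $M^{q_n}(\mi+\theta)=\one$; by Lemma~\ref{lem:closing} this is the closing condition for $\bbH^3$ for the same $\theta$, so $q_n$ is the potential of a $T$-periodic curve with total torsion $\theta T$, and it is of finite gap by Lemma~\ref{L:dressing}(v). Finally, $q_n^\flat\to q^\flat$ gives $M^{q_n^\flat}(\mi+\theta)\to M^{q^\flat}(\mi+\theta)$, hence $L_n\to L_*$ (the kernel of a nonzero nilpotent $2\times 2$ matrix depends continuously on it), and since the dressing map $(L,p)\mapsto g_{\mi+\theta,L}\# p$ is continuous with respect to the $L^2$-topology (Lemma~\ref{L:dressing}(iv) together with continuous dependence of the extended frame on the potential), we conclude $q_n=\bigl(g_{\mi+\theta,L_n}\bigr)^{j_0}\# q_n^\flat\to\bigl(g_{\mi+\theta,L_*}\bigr)^{j_0}\# q^\flat=q$ in $L^2$. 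The second Sym point $-\mi+\theta$ is handled automatically by the reality condition \eqref{eq:mreality}, as in the proof of Lemma~\ref{lem:closing}.

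The main obstacle is the undressing step: one must prove the reverse of Lemma~\ref{L:dressing:magic} — that dressing a closed potential by the orthogonal complement of its Baker--Akhiezer eigenline at the Sym point lowers $\ord_{\mi+\theta}(M-\tfrac12\Delta\,\one)$ by exactly one while leaving that eigenline fixed, so that after $j_0$ steps the monodromy at $\mi+\theta$ becomes a genuine Jordan block whose eigenline thereafter varies continuously under perturbation. This, together with the bookkeeping for $j_0>1$ and the mild strengthening of Theorem~\ref{thm:Psi-diffeo}, is the only place where the precise interplay between the Baker--Akhiezer function and simple factor dressing enters; everything else is a routine combination of Theorem~\ref{thm:Psi-diffeo}, Lemmas~\ref{L:dressing} and~\ref{L:dressing:magic}, and soft continuity arguments.
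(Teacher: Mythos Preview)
Your overall strategy---undress at the Sym point, approximate the undressed potential by finite gap potentials via Theorem~\ref{thm:Psi-diffeo}, then dress back to force semisimplicity of the monodromy---is exactly the paper's. The crucial difference is that the paper performs only \emph{one} undress--dress cycle, not $j_0$ of them. Concretely: with $L$ the Baker--Akhiezer eigenline of $M(\lambda_*)$, set $\widetilde{q}:=g_{\lambda_*,L}^{-1}\# q$ (a single step; the paper notes, and one checks directly in the coordinates of the proof of Lemma~\ref{L:dressing:magic}, that the Baker--Akhiezer eigenline of $\widetilde{q}$ at $\lambda_*$ is still $L$). Approximate $\widetilde{q}$ by finite gap $\widetilde{q}_n$ with $\Delta_n(\lambda_*)=\pm2$ and $\Delta_n'(\lambda_*)=0$ via Theorem~\ref{thm:Psi-diffeo}, let $L_n$ be the Baker--Akhiezer eigenline of $\widetilde{M}_n(\lambda_*)$, and put $q_n:=g_{\lambda_*,L_n}\#\widetilde{q}_n$. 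Since $\ord_{\lambda_*}(\Delta_n^2-4)\geq 2$, a \emph{single} application of Lemma~\ref{L:dressing:magic}(ii) already yields $\ord_{\lambda_*}(M_n-\tfrac12\Delta_n\one)\geq 1$, hence $M_n(\lambda_*)=\pm\one$. This makes both your ``reverse Lemma~\ref{L:dressing:magic}'' and your strengthened Theorem~\ref{thm:Psi-diffeo} unnecessary.

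Your iterated scheme also carries a genuine gap for $j_0\geq 2$. You define $q_n:=(g_{\lambda_*,L_n})^{j_0}\# q_n^\flat$ using the \emph{same} line $L_n$ at every step and invoke Lemma~\ref{L:dressing:magic}(ii) $j_0$ times. But after the first dressing the Baker--Akhiezer eigenline of the result is in general no longer $L_n$ (a short computation in the coordinates of that proof shows the kernel of the new $\widetilde{N}(\lambda_*)$ typically moves), so Lemma~\ref{L:dressing:magic}(ii) no longer applies; dressing again with the non--Baker--Akhiezer eigenline $L_n$ can drop the order back to $0$ by Lemma~\ref{L:dressing:magic}(i), and then $M^{q_n}(\lambda_*)$ need not be semisimple. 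For the convergence $L_n\to L$ you use an elementary ``kernel of a nonzero nilpotent'' argument, which is precisely what forces you down to order $0$ and into the iteration; the paper instead uses that $L_n$ is the value at $(\lambda_*,\mu_*)$ of the Baker--Akhiezer function attached to the spectral divisor of $\widetilde{q}_n$, and that these data depend continuously on the potential. With that, $q_n=g_{\lambda_*,L_n}\#\widetilde{q}_n\to g_{\lambda_*,L}\#\widetilde{q}=q$ by continuity of the dressing action.
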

\begin{proof}
  Let \,$q\in L^2([0,T],\bbC)$\, be the potential of a \,$T$-periodic curve in \,$\bbH^3$\,. If \,$q$\, already is of finite gap, then there is nothing to show. So we now suppose that \,$q$\, is not of finite gap.
  We let \,$M$\, be the monodromy of \,$q$\, and \,$\Delta = \tr M$\,.
  Because \,$q$\, satisfies the closing condition for \,$\bbH^3$\, at the Sym point \,$\lambda_* = \mi + \theta$\,, Lemma~\ref{lem:closing} shows
  that \,$\Delta(\lambda_*) = \pm 2$\, and \,$\Delta'(\lambda_*)=0$\, holds, hence \,$\ord_{\lambda_*}(\Delta^2-4) \geq 2$\,. Moreover, Lemma~\ref{lem:closing} shows that \,$M(\lambda_*)$\, is semisimple, so that \,$M(\lambda_*) = \pm\one$\, holds, which shows that \,$N := M - \tfrac12\Delta\one$\, has at \,$\lambda=\lambda_*$\, a zero of order \,$j_0 \geq 1$\,.

  Let \,$L\in \CPone$\, be the Baker-Akhiezer eigenline of \,$M(\lambda_*)$\,. Then we consider the dressed potential \,$\widetilde{q} := g_{\lambda_*,L}^{-1} \# q \in L^2([0,T],\bbC)$\,.
  We have \,$g_{\lambda_*,L}^{-1} = g_{\lambda_*,L^\perp}$\, by Lemma~\ref{L:dressing}(i), and therefore \,$\ord_{\lambda_*}(g_{\lambda_*,L}^{-1}\#M - \tfrac12\Delta\one) \geq j_0-1 \geq 0$\, by
  Lemma~\ref{L:dressing:magic}(i). Further \,$\widetilde{q}$\, is again not of finite gap by Lemma~\ref{L:dressing}(v). Note that the monodromies of \,$q$\, and of \,$\widetilde{q}$\, have the same trace function
  \,$\Delta$\, and the same Baker-Akhiezer eigenline \,$L$\, at \,$\lambda=\lambda_*$\,. 

  We let \,$(z_k)$\, be the perturbed Fourier coefficients of \,$\widetilde{q}$\,. 
  By Theorem~\ref{thm:Psi-diffeo} there exist \,$N \in \bbN$\,, \,$f_1,\dotsc,f_4 \in L^2([0,T],\bbC)$\,, neighborhoods \,$V$\, of \,$\widetilde{q}$\, in \,$L^2([0,T],\bbC)_{\widetilde{q},N}+ \bbR f_1 + \dotsc + \bbR f_4$\, and \,$W$\, of
  \,$\bigr( (z_k)_{|k|>N},\Delta(\lambda_*),0 \bigr) = \Psi_{N,f}(q)$\, in \,$\ell^2(|k|>N) \times \bbC \times \bbC$\, so that \,$\Psi_{N,f}|V: V \to W$\, is a diffeomorphism. 

  For each \,$n\in \bbN$\, with \,$n\geq N$\,, we define a sequence \,$(z^{(n)}_k)_{|k|>N}$\, in \,$\ell^2(|k|>N)$\, by
  $$ z^{(n)}_k := \begin{cases} z_k & \text{if \,$|k|\leq n$\,} \\ 0 & \text{if \,$|k|>n$\,} \end{cases} \; . $$
  In \,$\ell^2(|k|>N) \times \bbC \times \bbC$\,, the sequence \,$\bigr((z^{(n)}_k)_{|k|>N},\Delta(\lambda_*),0\bigr)$\, then converges for \,$n\to\infty$\, to \,$\bigr((z_k)_{|k|>N},\Delta(\lambda_*),0\bigr)=\Psi_{N,f}(\widetilde{q})$\,, therefore there exists \,$N_1\geq N$\, so that we have \,$\bigr((z^{(n)}_k)_{|k|>N},\Delta(\lambda_*),0\bigr) \in W$\, for all
  \,$n>N_1$\,. For such \,$n$\, we put \,$\widetilde{q}_n := (\Phi_N|V)^{-1}\bigr(\,(z^{(n)}_k)_{|k|>N}\,,\Delta(\lambda_*),0\bigr)$\,.
  Because \,$\Psi_{N,f}|V: V \to W$\, is a diffeomorphism, \,$\widetilde{q}_n$\, converges to \,$\widetilde{q}$\, in  \,$L^2([0,T],\bbC)$\,.
  Moreover only finitely many of the perturbed Fourier coefficients of \,$\widetilde{q}_n$\, are non-zero, so \,$\widetilde{q}_n$\, is a finite gap potential.

  Let \,$\widetilde{M}_{n}$\, be the monodromy of \,$\widetilde{q}_n$\, and \,$\Delta_n = \tr \widetilde{M}_{n}$\,. By construction we have
  \,$\Delta_n(\lambda_*) = \Delta(\lambda_*) = \pm 2$\, and \,$\Delta_n'(\lambda_*)=0$\,, hence \,$\ord_{\lambda_*}(\Delta_n^2-4) \geq 2$\,.
  Let \,$L_n\in\CPone$\, be the Baker-Akhiezer eigenline of \,$\widetilde{M}_{n}$\, at \,$\lambda=\lambda_*$\,. 

  We define \,$q_n := g_{\lambda_*,L_n} \# \widetilde{q}_n \in L^2([0,T],\bbC)$\,. Because \,$\widetilde{q}_n$\, is of finite gap, also \,$q_n$\, is of finite gap by Lemma~\ref{L:dressing}(v).
  The monodromy \,$M_{n}$\, of \,$q_n$\, has the same trace function \,$\Delta_n$\, as \,$\widetilde{q}_n$\,. Because of \,$\ord_{\lambda_*}(\Delta_n^2-4) \geq 2$\,,
  Lemma~\ref{L:dressing:magic}(ii) shows that \,$\ord_{\lambda_*}(M_{n}-\tfrac12\Delta_n\one) \geq 1$\, holds, and hence \,$M_{n}(\lambda_*)=\tfrac12\Delta(\lambda_*)\one$\, is semisimple.
  Therefore \,$M_{n}$\, satisfies the closing condition of Lemma~\ref{lem:closing}. Hence \,$q_n$\, is the potential of a finite gap \,$T$-periodic curve in \,$\bbH^3$\, with
  total torsion \,$\theta T$\,.
  
  Because the Baker-Akhiezer eigenline \,$L_n$\, is determined by the function value at \,$(\lambda_*,\mu_*)$\,
  of the Baker-Akhiezer function associated to the spectral divisor of \,$\widetilde{q}_n$\,, and these data depend continuously on the potential \,$q_n$\,, the sequence \,$(L_n)_{n > N_1}$\, converges
  in \,$\CPone$\, to the Baker-Akhiezer eigenline \,$L$\, of \,$\widetilde{q}$\, respectively \,$q$\,. Because the simple factor dressing action depends continuously on its input data,
  and \,$\widetilde{q}_n \longrightarrow \widetilde{q}$\, in \,$L^2([0,T],\bbC)$\,, it follows that
  $$ q_n = g_{\lambda_*,L_n}\# \widetilde{q}_n \;\longrightarrow\, g_{\lambda_*,L}\# \widetilde{q} = g_{\lambda_*,L}\#(g_{\lambda_*,L^\perp} \# q) = g_{\lambda_*,L}\#(g_{\lambda_*,L}^{-1} \# q) = q $$
  in \,$L^2([0,T],\bbC)$\,. 
\end{proof}

\begin{theorem}
\label{T:curvedense}
The set of closed finite gap curves in \,$\bbH^3$\, with respect to the period \,$T$\, is \,$W^{2,2}$-dense in the Sobolev space of all closed \,$W^{2,2}$-curves of length \,$T$\, in \,$\bbH^3$\,. Moreover, near any closed curve \,$\gamma$\, in \,$\bbH^3$\, there are closed finite gap curves with the same total torsion as \,$\gamma$\,. 
\end{theorem}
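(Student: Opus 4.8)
The plan is to deduce this from Proposition~\ref{prop:closedpot-dense} by transporting the $L^2$-density of periodic potentials into a $W^{2,2}$-density statement for curves; all the real work (Theorem~\ref{thm:Psi-diffeo}, the simple factor dressing of Lemma~\ref{L:dressing:magic}, the Baker--Akhiezer eigenline analysis) is already contained in that proposition, so what remains is a continuity argument for the Sym--Bobenko reconstruction. First I would set up the dictionary. Let $\gamma$ be a closed unit-speed $W^{2,2}$-curve of length $T$ in $\bbH^3$; if it is already finite gap there is nothing to prove. By the discussion after Lemma~\ref{th:sym}, its complex curvature $q$ lies in $L^2([0,T],\bbC)$, and extended to $\R$ it satisfies the quasi-periodicity \eqref{eq:q-quasiperiodic} with some total torsion $\theta T$. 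Regauging as in Section~\ref{Se:frames} gives a genuinely periodic potential $\widetilde q(t)=\exp(-\mi\theta t)\,q(t)\in L^2([0,T],\bbC)$ with $|\widetilde q|=|q|$ pointwise, which is the potential of a $T$-periodic curve in $\bbH^3$ with total torsion $\theta T$. Since the Sym--Bobenko formula \eqref{eq:gamma-H3} recovers a curve from its potential only up to an isometry of $\bbH^3$, fix once and for all an isometry $A$ of $\bbH^3$ with $\gamma=A\cdot\widehat\gamma$, where $\widehat\gamma(t):=F^{\widetilde q}(t,\mi+\theta)\,F^{\widetilde q}(t,-\mi+\theta)^{-1}$. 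By Proposition~\ref{prop:closedpot-dense} there is a sequence $(\widetilde q_n)$ of finite gap potentials of $T$-periodic curves in $\bbH^3$, all of total torsion $\theta T$, with $\widetilde q_n\to\widetilde q$ in $L^2([0,T],\bbC)$.

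Set $\widehat\gamma_n(t):=F^{\widetilde q_n}(t,\mi+\theta)\,F^{\widetilde q_n}(t,-\mi+\theta)^{-1}$ and $\gamma_n:=A\cdot\widehat\gamma_n$; each $\gamma_n$ is then a closed finite gap curve of length $T$ in $\bbH^3$ of total torsion exactly $\theta T$, so the ``moreover'' clause will follow once convergence is established. It therefore suffices to prove $\widehat\gamma_n\to\widehat\gamma$ in $W^{2,2}$, since then $\gamma_n=A\cdot\widehat\gamma_n\to A\cdot\widehat\gamma=\gamma$ in $W^{2,2}$ because $A$ acts polynomially on $\bbH^3$ and $W^{2,2}\hookrightarrow C^1$ in one variable. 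Abbreviate $\lambda_\pm:=\pm\mi+\theta$. As $L^2([0,T])\subset L^1([0,T])$, we have $\alpha^{\widetilde q_n}(\,\cdot\,,\lambda_\pm)\to\alpha^{\widetilde q}(\,\cdot\,,\lambda_\pm)$ in $L^1$, and then the continuous dependence of solutions of the linear ODE \eqref{eq:frame1} on their ($L^1$-)coefficients, via Gr\"onwall's inequality, gives $F^{\widetilde q_n}(\,\cdot\,,\lambda_\pm)\to F^{\widetilde q}(\,\cdot\,,\lambda_\pm)$ uniformly on $[0,T]$, and hence also in $W^{1,2}$. Consequently $\widehat\gamma_n\to\widehat\gamma$ uniformly and, by \eqref{eq:symbobenko-gamma'}, $\widehat\gamma_n'\to\widehat\gamma'$ uniformly, i.e.\ in $C^1$. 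Differentiating \eqref{eq:symbobenko-gamma'} once more, using $\tfrac{d}{dt}F^{\widetilde q}=F^{\widetilde q}\alpha^{\widetilde q}$ and simplifying with the commutation relations \eqref{eq:commutators}, a short computation yields
$$ \widehat\gamma'' = F^{\widetilde q}(\,\cdot\,,\mi+\theta)\,\bigl(\one+\widetilde q\,\varepsilon_+-\overline{\widetilde q}\,\varepsilon_-\bigr)\,F^{\widetilde q}(\,\cdot\,,-\mi+\theta)^{-1}, $$
and the analogous identity for $\widehat\gamma_n''$. Here the outer factors converge uniformly while the middle factor is affine in $(\widetilde q_n,\overline{\widetilde q_n})$, which converges in $L^2$; hence $\widehat\gamma_n''\to\widehat\gamma''$ in $L^2$, so $\widehat\gamma_n\to\widehat\gamma$ in $W^{2,2}$, as required.

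The only step where genuine care is needed is the observation that $\widehat\gamma''$ involves the potential itself and not merely the extended frame, so $L^2$-convergence of the $\widetilde q_n$ alone does not suffice: one needs the \emph{uniform} convergence of the frames $F^{\widetilde q_n}(\,\cdot\,,\lambda_\pm)$, which is exactly what the $L^1$-coefficient form of continuous dependence for \eqref{eq:frame1} provides. Everything else is bookkeeping --- the isometry ambiguity of the reconstruction, absorbed into the fixed map $A$, and the fact that both the regauging and the application of an isometry preserve finite-gap-ness and the total torsion.
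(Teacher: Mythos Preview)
Your argument is correct and follows exactly the approach the paper has in mind: the paper's proof simply invokes the analogous argument from \cite[Theorem~6.7]{kleinkilian1} with Proposition~\ref{prop:closedpot-dense} substituted for \cite[Corollary~6.6]{kleinkilian1}, and you have accurately reconstructed that continuity argument (the Gr\"onwall step for uniform convergence of the frames, and the explicit formula for $\widehat\gamma''$ showing that the potential enters only linearly against uniformly convergent frame factors).
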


\begin{proof}
The proof of this theorem is analogous to the proof of \cite[Theorem~6.7]{kleinkilian1} for the case of \,$\bbS^3$\,, where the reference to \cite[Corollary~6.6]{kleinkilian1} is replaced by
a reference to Proposition~\ref{prop:closedpot-dense}.
\end{proof}


\section{Finite-gap curves in the 2-dimensional space forms}

\begin{lemma}
  \label{L:q-realvalued}
  Let \,$q \in L^2([0,T],\bbC)$\, be given, and let \,$(z_k^q)$\, respectively \,$(z_k^{\bar{q}})$\, be the perturbed Fourier coefficients of \,$q$\, respectively of \,$\overline{q}$\,. Then \,$z_k^{\bar{q}} = \overline{z_{-k}^q}$\, holds for all \,$k\in\bbZ$\,. 
\end{lemma}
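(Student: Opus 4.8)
The plan is to express the extended frame and the monodromy of \,$\overline{q}$\, in terms of those of \,$q$\, via entrywise complex conjugation combined with the spectral reflection \,$\lambda\mapsto-\overline{\lambda}$\,, and then to track this relation through the definition of the perturbed Fourier coefficients recalled from \cite[Lemma~3.1, Proposition~3.3]{kleinkilian1}.

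First I would establish the frame identity. From \eqref{eq:epsilons} we have \,$\overline{\varepsilon}=-\varepsilon$\, while \,$\varepsilon_\pm$\, are real, so \eqref{eq:alpha_q} gives \,$\overline{\alpha^q(t,\lambda)}=\tfrac12\bigl(-\overline{\lambda}\,\varepsilon+\overline{q}(t)\,\varepsilon_++q(t)\,\varepsilon_-\bigr)=\alpha^{\overline{q}}(t,-\overline{\lambda})$\,. Conjugating \eqref{eq:frame1} entrywise shows that \,$t\mapsto\overline{F^q(t,\lambda)}$\, solves the initial value problem defining \,$F^{\overline{q}}(t,-\overline{\lambda})$\,, so by uniqueness \,$\overline{F^q(t,\lambda)}=F^{\overline{q}}(t,-\overline{\lambda})$\,; evaluating at \,$t=T$\, and replacing \,$\lambda$\, by \,$-\overline{\lambda}$\, gives \,$M^{\overline{q}}(\lambda)=\overline{M^q(-\overline{\lambda})}$\,. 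Writing the entries of \,$M^q$\, as \,$a^q,b^q,c^q,d^q$\, (with \,$b^q$\, the upper-right entry), the entries of \,$M^{\overline{q}}$\, at \,$\lambda$\, are thus \,$\overline{a^q(-\overline{\lambda})},\overline{b^q(-\overline{\lambda})},\overline{c^q(-\overline{\lambda})},\overline{d^q(-\overline{\lambda})}$\,; in particular \,$b^{\overline{q}}(\lambda)=\overline{b^q(-\overline{\lambda})}$\,, and the two diagonal entries of \,$M^{\overline{q}}$\, coincide at \,$\lambda$\, exactly when those of \,$M^q$\, coincide at \,$-\overline{\lambda}$\,.

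Next I would fix the enumeration. Let \,$(\lambda_k^q)_{k\in\bbZ}$\, be the enumeration of \,$\{\lambda:a^q(\lambda)=d^q(\lambda)\}$\, from \cite[Lemma~3.1]{kleinkilian1} with \,$\lambda_k^q-\tfrac{2\pi}{T}k\in\ell^2$\,. By the previous paragraph the corresponding zero set for \,$\overline{q}$\, is \,$\{-\overline{\lambda_k^q}:k\in\bbZ\}$\,, and since \,$\tfrac{2\pi}{T}k\in\R$\, the sequence \,$\lambda_k^{\overline{q}}:=-\overline{\lambda_{-k}^q}$\, satisfies \,$\lambda_k^{\overline{q}}-\tfrac{2\pi}{T}k=-\bigl(\overline{\lambda_{-k}^q}-\tfrac{2\pi}{T}(-k)\bigr)\in\ell^2$\,, so it is an admissible enumeration in the sense of \cite[Lemma~3.1]{kleinkilian1}; we use it to form \,$(z_k^{\overline{q}})$\,.

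Finally I would compute. By \cite[Proposition~3.3]{kleinkilian1}, \,$z_k^{\overline{q}}=2(-1)^k\,b^{\overline{q}}(\lambda_k^{\overline{q}})$\,, and since \,$-\overline{\lambda_k^{\overline{q}}}=-\overline{\bigl(-\overline{\lambda_{-k}^q}\bigr)}=\lambda_{-k}^q$\, the frame identity gives \,$b^{\overline{q}}(\lambda_k^{\overline{q}})=\overline{b^q\bigl(-\overline{\lambda_k^{\overline{q}}}\bigr)}=\overline{b^q(\lambda_{-k}^q)}$\,; hence \,$z_k^{\overline{q}}=2(-1)^k\,\overline{b^q(\lambda_{-k}^q)}=\overline{2(-1)^{-k}\,b^q(\lambda_{-k}^q)}=\overline{z_{-k}^q}$\,, using \,$(-1)^k=(-1)^{-k}$\, in the last step. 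The only delicate point is the bookkeeping in the enumeration step: one must make sure that the reflection \,$\lambda\mapsto-\overline{\lambda}$\, together with the index flip \,$k\mapsto-k$\, is compatible with the normalisation \,$\lambda_k-\tfrac{2\pi}{T}k\in\ell^2$\, that pins down the enumeration of \cite[Lemma~3.1]{kleinkilian1}; granting that, the rest is a direct substitution.
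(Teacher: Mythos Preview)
Your proof is correct and follows essentially the same approach as the paper: establish the monodromy identity \,$M^{\overline{q}}(\lambda)=\overline{M^{q}(-\overline{\lambda})}$\,, deduce \,$\lambda_k^{\overline{q}}=-\overline{\lambda_{-k}^{q}}$\, and \,$b^{\overline{q}}(\lambda)=\overline{b^{q}(-\overline{\lambda})}$\,, and read off the claim. The only cosmetic difference is that the paper reaches the monodromy identity in two steps (first \,$\alpha^{\overline{q}}(\lambda)=-(\alpha^{q}(-\lambda))^{t}$\, giving \,$M^{\overline{q}}(\lambda)=M^{q}(-\lambda)^{t,-1}$\,, then the reality condition \eqref{eq:reality}) whereas you obtain it in one step by conjugating the ODE directly; your route is slightly more economical.
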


\begin{proof}
  In this proof we denote the objects associated to \,$q$\, respectively to \,$\overline{q}$\, by the superscript \,${}^q$\, respectively \,${}^{\bar{q}}$\,.
  We have \,$\alpha^{\bar{q}}(\lambda) = - (\alpha^q (-\lambda))^t$\, in Equation~\eqref{eq:frame1}.
  Therefore the solution \,$F^{\bar{q}}$\, of Equation~\eqref{eq:frame1} satisfies \,$F^{\bar{q}}(t,\,\lambda) = (F^{q}(t,\,-\lambda))^{{t}^{-1}}$\,,
  and hence we also have \,$M^{\bar{q}}(\lambda) = M^q(-\lambda)^{t,-1}$\,. By Equation~\eqref{eq:reality},
  $$ M^{\bar{q}}(\lambda) = \overline{M^q(-\overline{\lambda})} $$
  follows.

  Therefore \,$a^{\bar{q}}(\lambda) = d^{\bar{q}}(\lambda)$\, holds for some \,$\lambda\in\C$\, if and only if \,$a^q(-\overline{\lambda}) = d^q(-\overline{\lambda})$\, holds. Therefore
  the sequence \,$(\lambda_k)_{k\in \bbZ}$\, of zeros of \,$a-d$\, (see \cite[Lemma~3.1]{kleinkilian1}) is given by \,$\lambda_k^{\bar{q}} = -\overline{\lambda_{-k}^q}$\,. Moreover,
  the perturbed Fourier coefficients \,$z_k = b(\lambda_k)$\, (see \cite[Definition~3.2]{kleinkilian1}) satisfy
  $$ z_k^{\bar{q}} = b^{\bar{q}}(\lambda_k^{\bar{q}}) = b^{\bar{q}}(-\overline{\lambda_{-k}^q}) = \overline{b^q(\lambda_{-k}^q)} = \overline{z_{-k}^q} \; . $$
\end{proof}

\begin{proposition}
  \label{P:R2S2-pot-dense}
  Let \,$\bbE^2 \in \{\bbR^2,\bbS^2\}$\,. The set of potentials of \,$T$-periodic finite gap curves in \,$\bbE^2$\, is \,$L^2$-dense in the set of potentials of all \,$T$-periodic
  curves in \,$\bbE^2$\,.
\end{proposition}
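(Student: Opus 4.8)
The strategy is to run the argument behind Proposition~\ref{prop:closedpot-dense} --- more precisely, behind its analogues \cite[Theorem~6.5, Corollary~6.6]{kleinkilian1} for $\bbR^3$ and $\bbS^3$ --- entirely inside the real subspace $L^2([0,T],\bbR)\subset L^2([0,T],\bbC)$, with the reality symmetry of Lemma~\ref{L:q-realvalued} guaranteeing that nothing leaves this subspace. Write $\bbE^3\in\{\bbR^3,\bbS^3\}$ for the $3$-dimensional space form in which $\bbE^2$ sits totally geodesically. The first step is a reduction: a $T$-periodic curve $\gamma$ in $\bbE^2$, regarded as a curve in $\bbE^2\subset\bbE^3$, has vanishing torsion, so its complex curvature $q$ is real-valued and $T$-periodic (so its total torsion is $\theta T=0$); conversely, every real-valued $q\in L^2([0,T],\bbR)$ satisfying the closing condition for $\bbE^3$ with $\theta=0$ is the potential of a $T$-periodic curve which, because $q$ is real, lies in a totally geodesic $\bbE^2\subset\bbE^3$. (For $\bbE^2=\bbR^2$ this is immediate from the $\bbR^3$ Sym--Bobenko formula: for real $q$ the value of $\alpha^q$ at the Sym point $\lambda=0$ lies in a fixed $2$-dimensional subalgebra of $\mathfrak{sl}_2(\bbC)$, which forces $\gamma$ into a plane; for $\bbS^2\subset\bbS^3$ the reasoning is the same.) Thus it suffices, given a real $q$ that satisfies the $\bbE^3$ closing condition and is not of finite gap, to approximate it in $L^2$ by real finite-gap potentials satisfying that same closing condition.

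The second point is that, since the Sym point(s) for $\bbE^3$ in the case $\theta=0$ lie on the real line, the monodromy there lies in $\mathrm{SU}_2$ by Remark~\ref{th:monodromy-remarks}(ii) and is in particular semisimple. Hence \emph{no simple factor dressing is needed} here, in contrast with the treatment of $\bbH^3$ in Section~\ref{Se:closing}: the closing condition for $\bbE^3$ is equivalent to the vanishing of finitely many real-valued quantities built from the monodromy at the Sym point(s), and the relevant local-diffeomorphism statement is \cite[Theorem~6.5]{kleinkilian1}, which produces an $N\in\bbN$, finitely many directions $f_1,\dots,f_m$, and a map $\Psi_{N,f}$ sending a potential $q_1$ to its tail of perturbed Fourier coefficients $(z_k(q_1))_{|k|>N}$ together with the $\bbE^3$ closing data, which is a local diffeomorphism near $q$.

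The core of the proof is to make $\Psi_{N,f}$ equivariant under complex conjugation and to restrict it to the real subspace. By Lemma~\ref{L:q-realvalued} the map $q_1\mapsto(z_k(q_1))$ intertwines $q_1\mapsto\overline{q_1}$ with the conjugate-linear involution $(z_k)\mapsto(\overline{z_{-k}})$; and by the reality relation $\overline{\Delta(\overline\lambda)}=\Delta(\lambda)$ of Remark~\ref{th:monodromy-remarks}(iii), together with the identity $M^{\bar q}(\lambda)=\overline{M^q(-\overline\lambda)}$ from the proof of Lemma~\ref{L:q-realvalued} and the fact that the set of Sym points is stable under $\lambda\mapsto-\overline\lambda$, the closing data is also carried to itself under conjugation. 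Consequently $\Psi_{N,f}$ is equivariant under complex conjugation, so --- provided the $f_j$ are chosen real-valued --- it maps the affine real subspace $L^2([0,T],\bbR)_{q,N}+\bbR f_1+\dots+\bbR f_m$ into $\{(z_k)_{|k|>N}: z_k=\overline{z_{-k}}\}\times(\text{real closing data})$, and since it is a local diffeomorphism whose derivative at $q$ is equivariant, this restriction is again a local diffeomorphism between the corresponding real Banach spaces. Securing real $f_j$ requires the real-coefficient version of Lemma~\ref{lem:closing-linindep} (equivalently of \cite[Lemma~6.4]{kleinkilian1}): the $q$-derivatives of the closing quantities at the real Sym point(s), restricted to real variations of $q$, are $\bbR$-linearly independent and are realised by real variations with only finitely many non-zero Fourier coefficients. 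One proves this along the lines of Lemma~\ref{lem:closing-linindep} --- via the residue pairing and Proposition~\ref{P:ML:solvable} applied to $\rho$-compatible Mittag--Leffler distributions supported over the Sym point(s) --- now exploiting the additional symmetry $\Delta(-\lambda)=\Delta(\lambda)$ of $\Sigma$ (equivalently $M(-\lambda)=(M(\lambda)^t)^{-1}$) which holds for real $q$; the required lower bound on the arithmetic genus of $\widetilde\Sigma$ is supplied by the assumption that $q$ is not of finite gap.

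The proof then finishes by truncation, exactly as in Proposition~\ref{prop:closedpot-dense}: since $q$ is real its perturbed Fourier coefficients satisfy $z_k(q)=\overline{z_{-k}(q)}$, so the truncations $z_k^{(n)}:=z_k(q)$ for $|k|\le n$ and $z_k^{(n)}:=0$ otherwise still satisfy this reality relation and converge to $(z_k(q))$ in $\ell^2$; feeding $\bigl((z_k^{(n)})_{|k|>N},\ \text{closing data of }q\bigr)$ through the inverse of the real-restricted $\Psi_{N,f}$ yields, for all large $n$, real potentials $q_n\to q$ in $L^2$ with only finitely many non-zero perturbed Fourier coefficients (hence of finite gap, by \cite[Proposition~3.4]{kleinkilian1}) and with the closing data of $q$ (hence satisfying the $\bbE^3$ closing condition). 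By the first paragraph these $q_n$ are potentials of $T$-periodic finite-gap curves in $\bbE^2$, completing the proof. The step I expect to be the genuine obstacle is the third one: carrying out the equivariance bookkeeping cleanly and, above all, proving the real-coefficient analogue of Lemma~\ref{lem:closing-linindep}, since this is where the reality constraint must actually be reconciled with the solvability criterion for Mittag--Leffler distributions of Section~3 --- everything else is a routine transcription of arguments already in place.
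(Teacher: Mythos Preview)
Your overall plan---reduce to real-valued $q$ with $\theta=0$, invoke the $\bbR^3$/$\bbS^3$ local-diffeomorphism machinery from \cite[Theorem~6.5, Corollary~6.6]{kleinkilian1}, truncate the perturbed Fourier coefficients, and observe that no dressing is needed because the Sym points are real---matches the paper exactly. The difference is in how reality of the approximating potentials $q_n$ is secured, and here the paper's argument is considerably simpler than yours.

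You propose to restrict $\Psi_{N,f}$ to the real subspace from the outset, which forces you to choose the $f_j$ real and hence to prove a real-variation analogue of Lemma~\ref{lem:closing-linindep} (or of \cite[Lemma~6.4]{kleinkilian1}); you correctly flag this as the substantial step. The paper bypasses it entirely. It keeps the \emph{complex} local diffeomorphism $\Psi_{N,f}$ from \cite{kleinkilian1} unchanged, and simply arranges the neighbourhood $V$ of $q$ on which $\Psi_{N,f}$ is a diffeomorphism to be symmetric under complex conjugation (legitimate since $\overline{q}=q$). The truncated sequences $(z_k^{(n)})$ automatically inherit the symmetry $z_{-k}^{(n)}=\overline{z_k^{(n)}}$ from $q$, so by Lemma~\ref{L:q-realvalued} the potentials $q_n$ and $\overline{q_n}$ have the same perturbed Fourier coefficients; since both lie in $V$ and $\Psi_{N,f}|V$ is \emph{injective}, this forces $q_n=\overline{q_n}$. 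Thus reality of $q_n$ comes for free from injectivity plus the reality symmetry of Lemma~\ref{L:q-realvalued}, with no need for a real-coefficient linear-independence lemma, no need to choose the $f_j$ real, and no equivariance bookkeeping. Your route would presumably work, but the obstacle you identified is not actually there.
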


\begin{proof}
  Let a closed curve \,$\gamma: [0,T]\to \bbE^2$\, be given, which we will also regard as a curve into \,$\bbE^3$\,. Seen in this way, the torsion of \,$\gamma$\, vanishes, and
  therefore the complex curvature \,$q$\, of \,$\gamma$\, is real-valued and we have \,$\theta=0$\,. Therefore Lemma~\ref{L:q-realvalued} shows that the perturbed Fourier coefficients \,$(z_k)$\, of \,$q$\,
  satisfy \,$z_{-k} = \overline{z_k}$\, for all \,$k$\,.

  We now apply the construction of the proof of \cite[Corollary~6.6]{kleinkilian1} 
  to \,$q$\,. Because of \,$\overline{q}=q$\,, we may suppose without loss of generality that the neighborhood \,$V$\, of \,$q$\, on which \,$\Psi_{N,f}$\, is a diffeomorphism is symmetric
  with respect to complex conjugation, i.e.~that for every \,$q_* \in V$\,, also \,$\overline{q_*} \in V$\, holds. 
  The sequence \,$(z_k^{(n)})$\, constructed in that proof has the property that \,$z_{-k}^{(n)} = \overline{z_k^{(n)}}$\, holds for every \,$n$\, and all \,$k\in \bbZ$\,.
  Therefore,
  the potentials \,$q_n \in V$\, and \,$\overline{q_n}\in V$\, have the same perturbed Fourier coefficients \,$(z_k^{(n)})$\, by Lemma~\ref{L:q-realvalued}. Because \,$\Psi_{N,f}|V$\, is injective,
  it follows that \,$q_n = \overline{q_n}$\, holds, i.e.~\,$q_n$\, is real-valued. It follows that the torsion of the curve \,$\gamma_n$\, with the ``complex'' curvature \,$q_n$\, vanishes,
  hence \,$\gamma_n$\, is a curve in \,$\bbE^2$\,. 
\end{proof}

\begin{lemma}
  \label{L:real-dressing}
  Suppose that \,$q \in L^2([0,T],\bbR)$\, is a real-valued potential. Moreover let \,$\lambda_* = \mi$\, and \,$L\in \mathbb{RP}^1$\,. Then \,$g_{\lambda_*,L}\# q$\, is also real-valued. 
\end{lemma}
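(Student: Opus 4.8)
The plan is to reduce the statement to a short computation via the explicit description of simple factor dressing in Lemma~\ref{L:dressing}(iv). That lemma gives \,$g_{\lambda_*,L}\#q = q+c$\,, where \,$c\in L^2([0,T],\bbC)$\, is the upper-right entry of the Hermitian matrix \,$2\mi(\lambda_*-\overline{\lambda_*})\,\pi_{L'}$\, with \,$L'=L'(t)=F(t,\lambda_*)^{-1}L$\,. Since \,$q$\, is already real-valued, proving that \,$g_{\mi,L}\#q$\, is real is the same as proving that \,$c$\, is real. Putting \,$\lambda_*=\mi$\, we have \,$\lambda_*-\overline{\lambda_*}=2\mi$\,, so the Hermitian matrix above equals \,$2\mi\cdot 2\mi\cdot\pi_{L'}=-4\,\pi_{L'}$\,, and the whole claim collapses to: the upper-right entry of the projection \,$\pi_{L'}$\, is real for every \,$t\in[0,T]$\,.

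The key observation I would use is that the extended frame evaluated at the Sym point \,$\mi$\, is a \emph{real} matrix when \,$q$\, is real. Indeed, using \,$\overline{q}=q$\,, \,$\mi\,\varepsilon=\bigl(\begin{smallmatrix}-1&0\\0&1\end{smallmatrix}\bigr)$\, and \,$\varepsilon_++\varepsilon_-=\bigl(\begin{smallmatrix}0&1\\-1&0\end{smallmatrix}\bigr)$\,, one computes
$$ \alpha^q(\mi)=\tfrac12\bigl(\mi\,\varepsilon+q\,\varepsilon_++\overline{q}\,\varepsilon_-\bigr)=\tfrac12\begin{pmatrix}-1&q\\-q&1\end{pmatrix}\,, $$
which is a real, trace-free matrix. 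Hence \,$F(\,\cdot\,,\mi)$\,, being the solution of the linear initial value problem \,$\tfrac{\mathrm{d}}{\mathrm{d}t}F=F\,\alpha^q(\mi)$\,, \,$F(0,\mi)=\one$\,, with real trace-free coefficients and real initial value, takes values in \,$\mathrm{SL}_2(\bbR)$\,. Consequently \,$L'(t)=F(t,\mi)^{-1}L\in\mathbb{RP}^1$\, for every \,$t$\,, i.e.~\,$L'(t)$\, is spanned by a real vector.

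The final step is routine: if \,$L'$\, is spanned by a real unit vector \,$v=(v_1,v_2)\in\bbR^2$\,, then the Hermitian projection onto \,$L'$\, is \,$\pi_{L'}=v\,v^*=v\,v^t$\,, a real symmetric matrix, so its upper-right entry \,$v_1v_2$\, is real; hence \,$c=-4\,v_1v_2$\, is real-valued and \,$g_{\mi,L}\#q=q+c$\, is a real-valued potential. I do not expect a genuine obstacle here; the only point worth flagging is the reality of \,$\alpha^q(\mi)$\,, which is a special feature of the Sym point \,$\lambda=\mi$\, (and its conjugate \,$-\mi$\,) and would fail for a general \,$\lambda_*\in\bbC\setminus\bbR$\,.
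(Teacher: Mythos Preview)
Your proof is correct and follows essentially the same route as the paper: reduce to showing that the upper-right entry of \,$-4\,\pi_{L'}$\, is real, and do this by proving \,$F(t,\mi)\in\mathrm{SL}_2(\bbR)$\, so that \,$L'(t)\in\mathbb{RP}^1$\,. The only cosmetic difference is how the reality of \,$F(t,\mi)$\, is established: you compute \,$\alpha^q(\mi)$\, directly and observe it is a real matrix, while the paper invokes the symmetry \,$F(t,-\lambda)=\overline{F(t,\bar{\lambda})}$\, (derived in the proof of Lemma~\ref{L:q-realvalued} from \,$\alpha^{\bar q}(\lambda)=-\alpha^q(-\lambda)^t$\, together with the reality condition~\eqref{eq:reality}) and evaluates at \,$\lambda=-\mi$\,.
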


\begin{proof}
  It suffices to show that the function \,$c\in L^2([0,T],\bbC)$\, from Lemma~\ref{L:dressing}(iv) is real-valued. \,$c$\, is the upper-right entry of the matrix
  \,$2\mi (\lambda_*-\overline{\lambda_*})\,\pi_{L'} = -4\,\pi_{L'}$\,, where \,$L' = F_{\lambda_*}^{-1}\,L$\,. Because \,$q$\, is real-valued, we have \,$F_{-\lambda} = F_\lambda^{t,-1} = \overline{F_{\bar{\lambda}}}$\,
  (see the proof of Lemma~\ref{L:q-realvalued}), and hence \,$\overline{F_{\lambda_*}}=F_{\lambda_*}$\,. The hypothesis \,$L \in \mathbb{RP}^1$\, therefore implies \,$L' \in \mathbb{RP}^1$\,,
  and hence \,$\pi_{L'}$\, is a real-valued matrix. Thus \,$c$\, is real-valued.
\end{proof}

\begin{proposition}
  \label{P:H2-pot-dense}
  The set of potentials of \,$T$-periodic finite gap curves in \,$\bbH^2$\, is \,$L^2$-dense in the set of potentials of all \,$T$-periodic
  curves in \,$\bbH^2$\,.
\end{proposition}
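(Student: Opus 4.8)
The plan is to run the argument of Proposition~\ref{prop:closedpot-dense} at the Sym point \,$\lambda_* = \mi$\,, while keeping every potential involved real-valued, in the spirit of the proof of Proposition~\ref{P:R2S2-pot-dense}. Given a closed curve \,$\gamma: [0,T]\to\bbH^2$\,, regarded also as a curve into \,$\bbH^3$\,, its torsion vanishes, so its complex curvature \,$q$\, is real-valued and \,$\theta = 0$\,; in particular the regauging of Section~\ref{Se:frames} is trivial and the relevant Sym point is \,$\lambda_* = \mi$\,. If \,$q$\, is already of finite gap there is nothing to prove, so I would assume it is not.

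The key new point I would establish first is that \emph{for a real-valued potential the Baker-Akhiezer eigenline of \,$M(\mi)$\, lies in \,$\mathbb{RP}^1$\,} (and hence so does its Hermitian orthocomplement). As in the proof of Lemma~\ref{L:q-realvalued}, a real-valued \,$q$\, satisfies \,$M(\lambda) = \overline{M(-\overline{\lambda})}$\, and \,$\Delta(\lambda) = \overline{\Delta(-\overline{\lambda})}$\,, so \,$N(\lambda) := M(\lambda) - \tfrac12\Delta(\lambda)\,\one$\, satisfies \,$N(\mi+\zeta) = \overline{N(\mi-\overline{\zeta})}$\,. Comparing Taylor coefficients at \,$\lambda=\mi$\,, the leading coefficient \,$\widetilde N(\mi)=N_{j_0}$\, (with \,$j_0 = \ord_{\mi}N$\,) equals \,$\mi^{\varepsilon}$\, times a real matrix for some \,$\varepsilon\in\{0,1\}$\,; since it is a nonzero nilpotent, its one-dimensional kernel --- the Baker-Akhiezer eigenline --- is a real line.

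With this in hand I would follow the proof of Proposition~\ref{prop:closedpot-dense} verbatim. Putting \,$\widetilde q := g_{\mi,L}^{-1}\#q = g_{\mi,L^\perp}\#q$\, with \,$L$\, the Baker-Akhiezer eigenline of \,$M(\mi)$\,, Lemma~\ref{L:real-dressing} shows \,$\widetilde q$\, is again real-valued (as \,$L^\perp\in\mathbb{RP}^1$\,), and it is still not of finite gap. Applying Theorem~\ref{thm:Psi-diffeo} to \,$\widetilde q$\, yields \,$N$\,, \,$f_1,\dotsc,f_4$\, and a diffeomorphism \,$\Psi_{N,f}: V\to W$\, near \,$\widetilde q$\,; as in the proof of Proposition~\ref{P:R2S2-pot-dense} we may take \,$V$\, symmetric under complex conjugation. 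Since the perturbed Fourier coefficients of the real-valued \,$\widetilde q$\, satisfy \,$z_{-k}=\overline{z_k}$\, by Lemma~\ref{L:q-realvalued}, and since \,$M^{\overline{q_*}}(\lambda) = \overline{M^{q_*}(-\overline{\lambda})}$\, together with the fact that the two remaining scalar coordinates of \,$\Psi_{N,f}$\, take the conjugation-invariant values \,$\Delta(\mi)=\pm 2$\, and \,$\Delta'(\mi)=0$\, on the points we invert, the fibres of \,$\Psi_{N,f}$\, over those points are conjugation-symmetric; hence the truncated approximants \,$\widetilde q_n\to\widetilde q$\, produced in that proof are real-valued by injectivity of \,$\Psi_{N,f}|V$\,. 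Setting \,$q_n := g_{\mi,L_n}\#\widetilde q_n$\, with \,$L_n$\, the Baker-Akhiezer eigenline of the monodromy of \,$\widetilde q_n$\, at \,$\mi$\,, the key point applied to the real-valued \,$\widetilde q_n$\, gives \,$L_n\in\mathbb{RP}^1$\,, so \,$q_n$\, is real-valued by Lemma~\ref{L:real-dressing}. Exactly as in Proposition~\ref{prop:closedpot-dense}, each \,$q_n$\, is of finite gap, satisfies the closing condition of Lemma~\ref{lem:closing} at \,$\mi$\,, and \,$q_n\to q$\, in \,$L^2$\, (using \,$L_n\to L$\, by continuity of the Baker-Akhiezer eigenline in the potential). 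Since \,$q_n$\, is real-valued, the torsion of the corresponding curve vanishes, so it is a closed finite gap curve in \,$\bbH^2$\,, which proves the proposition.

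The main obstacle is precisely the reality of the Baker-Akhiezer eigenlines: without it, the simple factor dressings used in Proposition~\ref{prop:closedpot-dense} would in general push us out of the class of curves in \,$\bbH^2$\,, and we would only recover the \,$\bbH^3$\, statement. The verification is short --- it is the symmetry \,$N(\mi+\zeta)=\overline{N(\mi-\overline{\zeta})}$\, and a look at the leading Taylor coefficient --- but it is the heart of the matter. The remaining care goes into checking that the \emph{whole} map \,$\Psi_{N,f}$\, (not just its \,$\ell^2$-component) has conjugation-symmetric fibres over the inverted points, which reduces to \,$M^{\overline q}(\lambda) = \overline{M^q(-\overline{\lambda})}$\, together with the observation that the two extra scalar coordinates are being set to the conjugation-invariant values \,$\pm 2$\, and \,$0$\,.
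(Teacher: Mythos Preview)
Your proposal is correct and follows essentially the same route as the paper's proof: run the argument of Proposition~\ref{prop:closedpot-dense} at \,$\lambda_*=\mi$\, while keeping everything real, using Lemma~\ref{L:real-dressing} for the dressings and the conjugation-symmetry argument of Proposition~\ref{P:R2S2-pot-dense} for the approximants \,$\widetilde q_n$\,. The paper simply asserts that the Baker-Akhiezer eigenline is in \,$\mathbb{RP}^1$\, for real-valued \,$q$\,; your explicit verification via the symmetry \,$N(\mi+\zeta)=\overline{N(\mi-\overline{\zeta})}$\, and the parity of the leading Taylor coefficient is a welcome addition, as is your check that the two scalar coordinates \,$\Delta(\mi)=\pm 2$\, and \,$\Delta'(\mi)=0$\, are indeed conjugation-invariant values (using \,$\Delta^{\overline{q_*}}(\mi)=\overline{\Delta^{q_*}(\mi)}$\, and \,$(\Delta^{\overline{q_*}})'(\mi)=-\overline{(\Delta^{q_*})'(\mi)}$\,).
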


\begin{proof}
  We repeat the proof of Proposition~\ref{prop:closedpot-dense} in the present setting. Because \,$q$\, is real-valued, \,$\theta=0$\, and therefore \,$\lambda_*=\mi$\, holds.
  Also because \,$q$\, is real-valued, the Baker-Akhiezer eigenline \,$L$\, is in \,$\mathbb{RP}^1$\,,
  and therefore \,$\widetilde{q} = g_{\lambda_*,L^\perp} \# q$\, is also real-valued by Lemma~\ref{L:real-dressing}.
  The sequence \,$\widetilde{q}_n$\, converging to \,$\widetilde{q}$\, can be chosen in \,$L^2([0,T],\bbR)$\, by the same argument as in the proof of Proposition~\ref{P:R2S2-pot-dense},
  and with this choice, the eigenlines \,$L_n$\, are again in \,$\mathbb{RP}^1$\,. Therefore \,$q_n = g_{\lambda_*,L_n} \# \widetilde{q}_n$\, is real-valued by another application
  of Lemma~\ref{L:real-dressing}; because it also satisfies the closing condition for \,$\bbH^3$\,, \,$q_n$\, is the potential of a \,$T$-periodic curve in \,$\bbH^2$\,.
  As before, \,$q_n$\, converges to \,$q$\, in \,$L^2([0,T],\bbR)$\,. 
\end{proof}

\begin{theorem}
  \label{T:E2-dense}
  Let \,$\bbE^2 \in \{\bbR^2,\bbS^2,\bbH^2\}$\,. The set of closed finite gap curves in \,$\bbE^2$\, with respect to the period \,$T$\, is \,$W^{2,2}$-dense in the Sobolev space of all
  closed \,$W^{2,2}$-curves of length \,$T$\, in \,$\bbE^2$\,. 
\end{theorem}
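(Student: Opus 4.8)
The plan is to mirror the proof of Theorem~\ref{T:curvedense}: having reduced matters to the \,$L^2$-density of potentials, which is exactly the content of Propositions~\ref{P:R2S2-pot-dense} and~\ref{P:H2-pot-dense}, I would transfer that density to \,$W^{2,2}$-density of curves via the continuity of the Sym--Bobenko reconstruction. So, given a closed \,$W^{2,2}$-curve \,$\gamma$\, of length \,$T$\, in \,$\bbE^2$\,, I would regard it as a curve in \,$\bbE^3$\,, take its complex curvature \,$q\in L^2([0,T],\bbC)$\, --- which is real-valued since the torsion of \,$\gamma$\, vanishes, whence \,$\theta=0$\, and \,$q$\, is genuinely \,$T$-periodic --- and apply Proposition~\ref{P:R2S2-pot-dense} (if \,$\bbE^2\in\{\bbR^2,\bbS^2\}$\,) respectively Proposition~\ref{P:H2-pot-dense} (if \,$\bbE^2=\bbH^2$\,) to obtain a sequence \,$(q_n)$\, of real-valued finite gap potentials of closed \,$T$-periodic curves in \,$\bbE^2$\, with \,$q_n\to q$\, in \,$L^2$\,.

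Next I would reconstruct, for each \,$n$\,, a closed finite gap curve \,$\gamma_n$\, in \,$\bbE^2$\, with complex curvature \,$q_n$\,: for \,$\bbH^2\subset\bbH^3$\, directly via Lemma~\ref{th:sym} (using that a \,$W^{2,2}$-curve in \,$\bbH^3$\, with real complex curvature and vanishing torsion lies in a totally geodesic copy of \,$\bbH^2$\,), and for \,$\bbR^2$\, and \,$\bbS^2$\, via the corresponding reconstructions from \cite{kleinkilian1}. Since \,$\gamma_n$\, is determined by \,$q_n$\, only up to an orientation-preserving isometry of \,$\bbE^2$\,, I would pin down that freedom by requiring the Frenet data of \,$\gamma_n$\, at \,$t=0$\, to coincide with that of \,$\gamma$\,; the isometry effecting this depends continuously on \,$q_n$\, and tends to the identity as \,$q_n\to q$\,, so the normalisation does no harm.

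It then remains to check that \,$\gamma_n\to\gamma$\, in \,$W^{2,2}([0,T],\bbE^2)$\,, which I would do by tracing the continuity through the reconstruction: \,$q_n\mapsto\alpha^{q_n}=\tfrac12(\lambda\varepsilon+q_n\varepsilon_++\overline{q_n}\varepsilon_-)$\, is affine, hence continuous, on \,$L^2$\,; the extended frame \,$F^{q_n}$\,, being the solution of the linear ODE~\eqref{eq:frame1}, depends continuously on \,$\alpha^{q_n}$\, in \,$W^{1,2}([0,T],\bbC^{2\times2})$\,; and the curve built from \,$F^{q_n}$\, by evaluating at the Sym points and forming the product~\eqref{eq:symbobenko-gamma}, with velocity~\eqref{eq:symbobenko-gamma'}, is \,$W^{2,2}$\, and depends continuously on \,$F^{q_n}$\,. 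Composing these, \,$q_n\to q$\, in \,$L^2$\, forces \,$\gamma_n\to\gamma$\, in \,$W^{2,2}$\,; since every \,$\gamma_n$\, is a closed finite gap curve of length \,$T$\, in \,$\bbE^2$\,, the density assertion follows.

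I expect no essential obstacle: the genuinely hard part --- producing finite gap potentials that both remain real-valued and satisfy the closing condition --- is already packaged in Propositions~\ref{P:R2S2-pot-dense} and~\ref{P:H2-pot-dense}, and the continuity argument is of the same routine type used for \,$\bbH^3$\, in Theorem~\ref{T:curvedense} and for \,$\bbS^3$\, in \cite[Theorem~6.7]{kleinkilian1}. The only mildly delicate points are the bookkeeping of the rigid-motion ambiguity in the reconstruction and the observation (for \,$\bbH^2$\,) that vanishing torsion together with real complex curvature confines the reconstructed \,$\bbH^3$-curve to a totally geodesic \,$\bbH^2$\,, so that Lemma~\ref{th:sym} applies verbatim.
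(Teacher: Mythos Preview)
Your proposal is correct and follows essentially the same route as the paper: the paper's proof simply says the argument is analogous to \cite[Theorem~6.7]{kleinkilian1} with the potential-density input replaced by Proposition~\ref{P:R2S2-pot-dense} (for \,$\bbR^2,\bbS^2$\,) respectively Proposition~\ref{P:H2-pot-dense} (for \,$\bbH^2$\,), which is exactly what you spell out. Your write-up is in fact more detailed than the paper's, explicitly tracing the continuity of the reconstruction and the handling of the rigid-motion normalisation.
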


\begin{proof}
  The proof is analogous to the proof of \cite[Theorem~6.7]{kleinkilian1}, where the reference to \cite[Corollary~6.6]{kleinkilian1} is replaced by a reference to
  Proposition~\ref{P:R2S2-pot-dense} (for \,$\bbE^2 \in \{\bbR^2,\bbS^2\}$\,) resp.~to Proposition~\ref{P:H2-pot-dense} (for \,$\bbE^2=\bbH^2$\,).
\end{proof}
  
\bibliographystyle{amsplain}

\bibliography{ref}

\def\cprime{$'$}
\providecommand{\bysame}{\leavevmode\hbox to3em{\hrulefill}\thinspace}
\providecommand{\MR}{\relax\ifhmode\unskip\space\fi MR }
\providecommand{\MRhref}[2]{%
  \href{http://www.ams.org/mathscinet-getitem?mr=#1}{#2}
}
\providecommand{\href}[2]{#2}
\begin{thebibliography}{10}

\bibitem{BurP:dre}
F.~E. Burstall and F.~Pedit, \emph{Dressing orbits of harmonic maps}, Duke
  Math. J. \textbf{80} (1995), no.~2, 353--382.

\bibitem{calini-ivey1998}
A.~Calini and T.~Ivey, \emph{B\"acklund transformations and knots of constant
  torsion}, J. Knot Theory Ramifications \textbf{7} (1998), no.~6, 719--746.

\bibitem{caliniivey2001}
\bysame, \emph{Connecting geometry, topology and spectra for finite-gap {NLS}
  potentials}, Phys. D \textbf{152/153} (2001), 9--19, Advances in nonlinear
  mathematics and science.

\bibitem{calini-ivey2005}
\bysame, \emph{Finite-gap solutions of the vortex filament equation: genus one
  solutions and symmetric solutions}, J. Nonlinear Sci. \textbf{15} (2005),
  no.~5, 321--361.

\bibitem{ehlers-knoerrer}
F.~Ehlers and H.~Kn\"orrer, \emph{An algebro-geometric interpretation of the
  {B}\"acklund-transformation for the {K}orteweg-de {V}ries equation}, Comment.
  Math. Helvetici \textbf{57} (1982), 1--10.

\bibitem{farkaskra}
H.~M. Farkas and I.~Kra, \emph{{R}iemann surfaces}, second ed., Graduate Texts
  in Mathematics, vol.~71, Springer-Verlag, New York, 1992. \MR{1139765}

\bibitem{forster}
O.~Forster, \emph{Lectures on {R}iemann surfaces}, Springer, 1981.

\bibitem{goldstein-petrich1991}
R.~E. Goldstein and D.~M. Petrich, \emph{The {K}orteweg-de {V}ries hierarchy as
  dynamics of closed curves in the plane}, Phys. Rev. Lett. \textbf{67} (1991),
  no.~23, 3203--3206.

\bibitem{grinevich2001}
P.~G. Grinevich, \emph{Approximation theorem for the self-focusing nonlinear
  {S}chr\"odinger equation and for the periodic curves in {$\bold R^3$}}, Phys.
  D \textbf{152/153} (2001), 20--27, Advances in nonlinear mathematics and
  science.

\bibitem{grinevich-schmidt-sfb}
P.~G. {Grinevich} and M.~U. {Schmidt}, \emph{Closed curves in
  \,$\mathbb{R}^3$\,: a characterization in terms of curvature and torsion, the
  hasimoto map and periodic solutions of the filament equation}, SFB 288
  Preprint No.~254, 1997.

\bibitem{Hartshorne}
R.~Hartshorne, \emph{Generalized divisors on {G}orenstein curves and a theorem
  of {N}oether}, J. Math. Kyoto Univ. \textbf{26} (1986), 375--386.

\bibitem{hasimoto1972}
R.~Hasimoto, \emph{A soliton on a vortex filament}, J. Fluid. Mech. \textbf{51}
  (1972), 477--485.

\bibitem{klein-habil}
S.~Klein, \emph{A spectral theory for simply periodic solutions of the
  sinh-{G}ordon equation}, Lecture Notes in Mathematics \textbf{2229} (2018).

\bibitem{kleinkilian1}
S.~{Klein} and M.~{Kilian}, \emph{{On closed finite gap curves in spaceforms
  I}}, submitted for publication (2018), 23 pages.

\bibitem{klss2016}
S.~{Klein}, E.~{L\"ubcke}, M.~U. {Schmidt}, and T.~{Simon}, \emph{{Singular
  curves and Baker-Akhiezer functions}}, ArXiv e-prints (2016), 1--39.

\bibitem{langer1999}
J.~Langer, \emph{Recursion in curve geometry}, New York J. Math. \textbf{5}
  (1999), 25--51 (electronic).

\bibitem{McI}
I.~McIntosh, \emph{Global solutions of the elliptic 2d periodic {T}oda
  lattice}, Nonlinearity \textbf{7} (1994), no.~1, 85--108.

\bibitem{PreS}
A.~Pressley and G.~Segal, \emph{Loop groups}, Oxford Science Monographs, Oxford
  Science Publications, 1988.

\bibitem{schmidt1996}
M.~U. Schmidt, \emph{Integrable systems and {R}iemann surfaces of infinite
  genus}, Mem. Amer. Math. Soc. \textbf{122} (1996), no.~581, viii+111.
  \MR{1329944}

\bibitem{schmidtwillmore}
M.~U. {Schmidt}, \emph{{A proof of the Willmore conjecture}}, ArXiv Mathematics
  e-prints (2002), 215 pages.

\bibitem{TerU}
C.~Terng and K.~Uhlenbeck, \emph{B\"{a}cklund transformations and loop group
  actions}, Comm. Pure and Appl. Math \textbf{LIII} (2000), 1--75.

\end{thebibliography}

\end{document}